\documentclass{amsart}
\usepackage[margin=1.2in]{geometry}

\usepackage{amssymb}
\usepackage{amsmath}
\allowdisplaybreaks

\usepackage{xcolor}
\usepackage[all]{xy}
\usepackage{hyperref}
\usepackage{amssymb}
\usepackage{amsthm}
\usepackage{tikz-cd}
\usepackage{mathrsfs}
\usepackage{diagbox}
\usepackage{multirow}
\usepackage{float}

\newcommand{\BA}{{\mathbb {A}}}

\newcommand{\BD}{{\mathbb {D}}}
\newcommand{\BF}{{\mathbb {F}}}
\newcommand{\BQ}{{\mathbb {Q}}}
\newcommand{\BZ}{{\mathbb {Z}}}

\newcommand{\CA}{{\mathcal {A}}}
\newcommand{\CF}{{\mathcal {F}}}
\newcommand{\CM}{{\mathcal {M}}}
\newcommand{\CO}{{\mathcal {O}}}
\newcommand{\CT}{{\mathcal {T}}}

\newcommand{\fa}{{\mathfrak{a}}}
\newcommand{\fb}{{\mathfrak{b}}}
\newcommand{\fp}{{\mathfrak{p}}}
\newcommand{\fq}{{\mathfrak{q}}}

\newcommand{\Gal}{\mathrm{Gal}}
\newcommand{\Ker}{\mathrm{Ker}}

\newcommand{\Coker}{\mathrm{Coker}}

\newcommand{\rank}{\mathrm{rank}}
\newcommand{\rk}{\mathrm{rk}}

\newcommand{\Aut}{\mathrm{Aut}}
\newcommand{\Cl}{\mathrm{Cl}}
\newcommand{\cl}{\mathrm{cl}}
\newcommand{\cyc}{\mathrm{cyc}}
\newcommand{\pr}{\mathrm{pr}}
\renewcommand{\Im}{\mathrm{Im}}
\newcommand{\tF}{\widetilde{F}}



\newcommand{\leg}[2]{\left(\frac{#1}{#2}\right)}
\newcommand{\lleg}[2]{\left[\frac{#1}{#2}\right]}

\newcommand{\hilbert}[3]{\Bigl(\dfrac{#1,#2}{#3}\Bigr)}
\newcommand{\lhilbert}[3]{\left[\dfrac{#1,#2}{#3}\right]}

\numberwithin{equation}{section}

\theoremstyle{plain}
\newtheorem{thm}{Theorem}[section] 
\newtheorem{lem}[thm]{Lemma}  \newtheorem{prop}[thm]{Proposition}
\newtheorem {conj}[thm]{Conjecture} \newtheorem{defn}[thm]{Definition}
    \newtheorem{remk}[thm]{Remark}

\newtheorem*{2-rank formula}{2-rank formula}

\theoremstyle{remark} \newtheorem{remark}[thm]{Remark}
\theoremstyle{remark} 
\theoremstyle{remark}

\begin{document}
\title{On abelian $2$-ramification torsion modules of quadratic fields }

\author{Jianing Li}
\address{Research Center for Mathematics and Interdisciplinary Sciences, Shandong University, Qingdao, 266237, P. R. China }
\email{lijn@sdu.edu.cn}

\author{Yi Ouyang}
\address{CAS Wu Wen-Tsun Key Laboratory of Mathematics, University of Science and Technology of China, Hefei, Anhui 230026, China}

\email{yiouyang@ustc.edu.cn}

\author{Yue Xu}
\address{CAS Wu Wen-Tsun Key Laboratory of Mathematics, University of Science and Technology of China, Hefei, Anhui 230026, China}
\email{wasx250@mail.ustc.edu.cn}

\subjclass[2010]{11R45, 11R11, 11R37}

\keywords{quadratic fields, density theorems, abelian $2$-ramification}

\maketitle

\begin{abstract}
For a  number field $F$ and  a prime number $p$,  the $\BZ_p$-torsion module of the Galois group of the maximal abelian pro-$p$ extension of $F$ unramified outside $p$ over $F$, denoted as  $\CT_p(F)$, is an important subject in abelian $p$-ramification theory.
In this paper we study the group $\CT_2(F)=\CT_2(m)$ of the quadratic field $F=\BQ(\sqrt{ m})$.
Firstly, assuming $m>0$, we prove an explicit $4$-rank formula for quadratic fields that $\rk_4(\CT_2(-m))=\rk_2(\CT_2(-m))-\rank(R)$ where $R$ is a certain explicitly described  R\'edei  matrix over $\BF_2$. Furthermore, using this formula, we obtain the $4$-rank density of $\CT_2$-groups of imaginary quadratic fields. Secondly, for $l$  an odd prime,  we obtain results about the $2$-power divisibility of  orders of $\CT_2(\pm l)$ and $\CT_2(\pm 2l)$, both of which are cyclic $2$-groups. In particular we find that $\#\CT_2(l)\equiv 2\# \CT_2(2l)\equiv h_2(-2l)\pmod{16}$  if $l\equiv 7\pmod{8}$ where $h_2(-2l)$ is the $2$-class number of $\BQ(\sqrt{-2l})$. We then obtain density results for $\CT_2(\pm l)$ and $\CT_2(\pm 2l)$ when the orders are small. Finally, based on our density results and numerical data, we propose distribution conjectures about $\CT_p(F)$ when $F$ varies over real or imaginary quadratic fields for any prime $p$, and about $\CT_2(\pm l)$ and $\CT_2(\pm 2 l)$ when $l$ varies, in the spirit of Cohen-Lenstra heuristics. Our conjecture in the  $\CT_2(l)$ case is closely connected to Shanks-Sime-Washington's speculation on the distributions of the zeros of $2$-adic $L$-functions and to the distributions of the fundamental units.
\end{abstract}

\section{Introduction and Main results}\label{sec:intro}

Let $p$ be a prime number. For a number field $F$, let $M=M(F,p)$ be  the maximal abelian pro-$p$ extension of $F$ unramified outside $p$. By class field theory, $\Gal(M/F)$ is a finitely generated $\BZ_p$-module of rank $r_2(F) + \delta_p(F)+1$, where $r_2(F)$ is the number of complex places of $F$ and $\delta_p(F) \geq 0$ is the Leopoldt defect of $F$ at $p$. Leopoldt's Conjecture is that $\delta_p(F)=0$ for all $p$ and $F$ and this has been proved when $F/\BQ$ is abelian. We call the $\BZ_p$-torsion subgroup of $\Gal(M/F)$, a finite abelian $p$-group, the $\CT_p$-group of $F$ and denote it by  $\CT_p(F)$.
The study of $\Gal(M/F)$ and $\CT_p(F)$ which goes back to fundamental contributions of Serre, Shafarevich and Brumer, is the so-called abelian $p$-ramification theory. We refer the reader to the historical survey \cite{Gra19} by Gras for this theory, in which the $p$-rank formula for $\CT_p(F)$ due to himself is stated. When $F$ is totally real, assuming $\delta_{p}(F)=0$, the work of Coates \cite{Coa77} and Colmez \cite{Col88} shows that the order of $\CT_p(F)$ is essentially the residue of the $p$-adic zeta function of $F$ up to a $p$-adic unit.
This motivates us to study the group structure of $\CT_p(F)$ in more detail. Like class groups, the study of $\CT_p(F)$ can be much more explicit in the case that $F$ is a quadratic field and $p=2$. In this paper, we will mainly consider this case, and our main purpose is to study the distribution of $\CT_2(F)$ when $F$ varies in a certain family of quadratic fields.

Note that the structure of a finite abelian $p$-group $A$ is completely determined by its $p^i$-rank $\rk_{p^i}(A):=\dim_{\BF_p} p^{i-1}A/p^i A$   for all $i$. As a consequence, to study $\CT_p(F)$, it is necessary and sufficient to study $\rk_{p^i}(\CT_p(F))$ for all $i$.

The general $p$-rank formula for $\CT_p(F)$ becomes very explicit for $p=2$ and $F$ quadratic, after a computation of genus class numbers; see Theorem~\ref{thm:2-rank}. If $F$ is imaginary quadratic, we shall prove an explicit $4$-rank formula of $\CT_2(F)$, namely, $\rk_4(\CT_2(F))$ is the difference of $\rk_2(\CT_2(F))$ and the rank of a certain explicitly described R\'edei matrix; see Theorem~\ref{thm:4-rankT}. This formula is new and is analogous to the classical $4$-rank formula for narrow class groups of quadratic fields. Applying this result, we deduce the following $4$-rank density formula for $\CT_2$-groups of imaginary quadratic fields, which is the main result of this paper:

\begin{thm}[$4$-rank density formula for $\CT_2$ of imaginary quadratic fields]\label{thm:4-rank-density}	
For integers $t\geq 1$ and $r\geq 0$, and a real number $x>0$, put
\begin{align*}
&N_{x} := \{m \in \BZ_{>0}\mid  m\leq x \text{ squarefree} \},  \\
&N_{t;x}:=\{m \in N_{x} \mid \text{ exactly } t \text{ prime numbers are ramified in } \BQ(\sqrt{-m}) \},   \\
&T^r_{t;x}:=\{m \in N_{t;x} \mid  \rk_4(\CT_2(\BQ(\sqrt{-m})))=r \}.
\end{align*}		
Then for any integer $r\geq 0$, the limit $d^T_{\infty,r}$, which is defined by
\begin{equation}\label{eq:limit-definition2}
d^T_{\infty,r}:= \lim\limits_{t\rightarrow \infty} \lim\limits_{x\rightarrow \infty} \frac{\#T^r_{t;x}}{\#N_{t;x}}
\end{equation}
exists and
\begin{equation}\label{eq:4-rank-density}
d^T_{\infty,r}=\frac{\prod\limits_{i=r+2}^{\infty}(1-2^{-i})}{2^{r(r+1)}  \prod\limits_{i=1}^{r}(1-2^{-i})}=\frac{\eta_\infty(2)}{2^{r(r+1)}\eta_r(2)\eta_{r+1}(2)}
\end{equation}
where $\eta_s(q):=\prod_{i=1}^s (1-q^{-i})$ for $s\in \BZ_{>0}\cup \{\infty\}$ and $q\geq 2$ and $\eta_0(q):=1$.
\end{thm}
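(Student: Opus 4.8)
The plan is to deduce the density formula from the $4$-rank formula $\rk_4(\CT_2(\BQ(\sqrt{-m})))=\rk_2(\CT_2(\BQ(\sqrt{-m})))-\rank(R)$ (stated earlier in the excerpt) by modelling the R\'edei matrix $R$ as a uniformly random $\BF_2$-matrix of the appropriate size and applying an equidistribution result for Legendre-symbol entries. First I would recall that $\rk_2(\CT_2(\BQ(\sqrt{-m})))$ is governed by genus theory, so for $m\in N_{t;x}$ it equals an explicit affine function of $t$ — say $t-1$ or $t$ depending on congruence conditions on $m$ — which is deterministic once $t$ is fixed. Thus on $N_{t;x}$ the quantity $r=\rk_4(\CT_2)$ equals a fixed integer $s(t)$ minus $\rank(R)$, where $R$ is an $a(t)\times b(t)$ matrix over $\BF_2$ whose rows and columns are indexed by the ramified primes $\ell_1,\dots,\ell_t$ and whose entries are Legendre symbols $\leg{\ell_i}{\ell_j}$ (suitably normalized, with diagonal entries determined by the off-diagonal ones via reciprocity). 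So the event $\{m\in T^r_{t;x}\}$ becomes $\{\rank(R)=s(t)-r\}$.

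Next I would invoke the standard Chebotarev/character-sum equidistribution statement (this is the analogue of R\'edei's classical argument and of the Fouvry--Kl\"uners work on $4$-ranks of class groups): as $m$ ranges over $N_{t;x}$ with $x\to\infty$, the off-diagonal Legendre symbols $\leg{\ell_i}{\ell_j}$ become jointly uniformly distributed in $\BF_2^{\binom{t}{2}}$, because they are cut out by splitting conditions in linearly disjoint quadratic extensions. Quadratic reciprocity then pins down the diagonal and any remaining entries as $\BF_2$-linear combinations of the free entries plus a fixed affine shift coming from the sign of $m$ and the behaviour of $2$; the upshot is that $R$ is, in the limit, distributed as $A + c$ where $A$ is a uniform random matrix from a prescribed affine-linear family of $a(t)\times b(t)$ matrices over $\BF_2$ (symmetric, or symmetric-with-a-constraint, in the relevant cases). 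The key input one then needs is the limiting distribution of the \emph{corank} of such a random matrix: for the symmetric-matrix model with a single linear constraint, the probability that the corank equals $r$ tends, as the size $\to\infty$, precisely to $\dfrac{\eta_\infty(2)}{2^{r(r+1)}\eta_r(2)\eta_{r+1}(2)}$. This is exactly the Fouvry--Kl\"uners corank distribution, and plugging it in gives $d^T_{\infty,r}$ and the formula \eqref{eq:4-rank-density}; taking $t\to\infty$ is then just observing that $s(t)\to\infty$ so the finite-size truncation disappears and the limit stabilizes.

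Finally, for the unconditional density $d^T_r$ over all of $N_x$ (without fixing $t$), I would combine the above with the fact that the number of squarefree $m\le x$ with exactly $t$ prime factors, suitably weighted, has the Erd\H{o}s--Kac/Landau Gaussian behaviour: $\#N_{t;x}/\#N_x$ concentrates around $t\approx \log\log x$, which $\to\infty$. A uniformity argument — one needs the convergence $\#T^r_{t;x}/\#N_{t;x}\to d^T_{\infty,r}$ to be uniform enough in $t$ over the relevant range, e.g.\ for $t$ up to a slowly growing function of $x$, plus a tail estimate showing the contribution of the remaining $t$ is negligible — then lets one interchange the limits and conclude $d^T_r=d^T_{\infty,r}$. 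The main obstacle I anticipate is precisely this last uniformity-and-tail step: the character-sum equidistribution has an error term that degrades as $t$ grows (the number of quadratic conditions is $\binom{t}{2}$), so one must carefully choose the cutoff for $t$ and bound the exceptional set, rather than just citing the fixed-$t$ limit; handling the affine shift $c$ correctly in each residue class of $m$ modulo $8$ (so that one genuinely lands in the right matrix model) is the other delicate bookkeeping point.
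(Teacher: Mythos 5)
Your overall strategy (explicit $4$-rank formula $\rk_4=\rk_2-\rank(R)$, equidistribution of the symbol entries, then a random-matrix corank computation) is the classical Gerth/Yue--Yu route, but as written it has two genuine gaps. First, you have not identified the actual shape of the matrix $R$, and this is where the real work lies. For $\CT_2(\BQ(\sqrt{-m}))$ the R\'edei matrix is \emph{not} simply the table of symbols $\leg{q_i}{q_j}$ among the ramified primes: the generating set $B$ contains the extra elements $-1$, $\pi$ (a generator of $\fp^f$) and possibly $\alpha=h+\sqrt{-m}$, and the rows come from id\`eles supported at the ramified primes and at $2$. The paper's proof of Theorem~\ref{thm:compareTK} shows that after explicit row/column operations $R$ becomes $(\tau,\beta,R^{\mathrm{Cl}})$, where $\tau=\bigl(\lleg{2}{q_i}\bigr)_i$ and $\beta$ involves Hilbert symbols of $\pi$; crucially, Lemma~\ref{lem:matrix} proves that $\beta$ is a \emph{sum of columns} of $R^{\mathrm{Cl}}$, i.e.\ it is linearly dependent rather than an extra "random" column. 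Your proposal treats $R$ as "a uniform random matrix from a prescribed affine-linear family" without determining which family, and you even name the wrong limiting law: $\frac{\eta_\infty(2)}{2^{r(r+1)}\eta_r(2)\eta_{r+1}(2)}$ is the corank law of the one-extra-column (Yue--Yu tame-kernel) model, not the Fouvry--Kl\"uners class-group law $\frac{\eta_\infty(2)}{2^{r^2}\eta_r(2)^2}$. Had $\beta$ been an independent uniform column, or had the model been the plain class-group one, the constant in \eqref{eq:4-rank-density} would come out wrong; so the matrix identification cannot be waved through.

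Second, the passage from the iterated limit $d^T_{\infty,r}$ to the natural density $d^T_r$ is not a routine "uniformity plus tail" interchange. The fixed-$t$ character-sum equidistribution degrades as $t$ grows, and proving the density over all squarefree $m\le x$ is precisely the hard analytic content that for class groups required Fouvry--Kl\"uners and Smith. You flag this as the anticipated obstacle but do not overcome it; the paper does not overcome it analytically either — instead it sidesteps it: using Theorem~\ref{thm:compareTK} it identifies $T^r_{t;x}$ with $K^{r+1}_{t;x}$ up to the negligible strata $m\not\equiv 3\bmod 4$ and $\pm2\in N(\BQ(\sqrt m))$ (each of relative size $O(2^{-t})$ or $o(\#N^{(1)}_{t;x})$), then transfers the \emph{unconditional} density theorems for class groups/tame kernels (Fouvry--Kl\"uners, Smith, Yue--Yu) to $\CT_2$ via the relation between the R\'edei matrices together with Hardy--Ramanujan concentration of $\omega(m)$ near $\log\log x$. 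So to complete your argument you would either have to prove the uniform-in-$t$ equidistribution yourself (essentially redoing Fouvry--Kl\"uners for this augmented matrix model) or, as the paper does, establish the explicit comparison with the tame-kernel/class-group R\'edei matrices and quote the known density theorems. You would also need to account for the degenerate congruence strata ($m\equiv 1,2\bmod 4$, or all prime divisors $\equiv\pm1\bmod 8$, where the $2$-rank and the set $B$ change); these have density zero, but that must be said and used, as in \eqref{eq:limit223}.
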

\begin{remark}
Theorem~\ref{thm:4-rank-density} is analogue to the density theorem of Gerth \cite{Ger84} on the $4$-rank of narrow class groups of quadratic fields, and to the theorem of Yue-Yu \cite{YY04} on the $4$-rank of the tame kernel of quadratic fields.
\end{remark}

We then turn to  study the $\CT_2$-groups of subfamilies of quadratic fields, namely $\BQ(\sqrt{\pm l})$ and  $\BQ(\sqrt{\pm 2l})$ where $l$ is an odd prime. For simplicity, write $\CT_2(m)$ for $\CT_2(\BQ(\sqrt{m}))$, $t_2(m)$ for its order, and $h_2(m)$ for the $2$-class number of $\BQ(\sqrt{m})$. Such questions for $\CT_2(l)$ and $\CT_2(2l)$ have been studied by many researchers before. For example, consider $\BQ(\sqrt{l})$ and let $L_2(1,\chi_l)$ be the $2$-adic $L$-function where $\chi_l$ is the quadratic character associated with $\BQ(\sqrt{l})$. Recalling that $h_2(l)=1$, then Coates' order formula (see \cite[Appendix 1]{Coa77} or Proposition~\ref{prop:coates}) directly relates $\# \CT_2(l)$ to the $2$-adic regulator of $\BQ(\sqrt{l})$ and therefore to the $2$-adic valuation of $L_2(1,\chi_l)$ by the class number formula.
The latter two objects and their relation to $h_2(-l)$ and $h_2(-2l)$ have been studied by Kaplan, Leonard, Williams (see \cite{KW82}, \cite{LW82}, \cite{Wil81}) and by Shanks-Sime-Washington \cite{SSW99}. However, it seems that there is no study for $\CT_2(-l)$ and $\CT_2(-2l)$ before.

By the $2$-rank formula \eqref{eq:2-rankT}, $\CT_2( \pm l ) =\CT_2( \pm 2l ) =0$ if  $l\equiv \pm 3\pmod 8$ and $\CT_2( \pm l  )$ and   $\CT_2( \pm 2l  )$ are nontrivial $2$-cyclic groups if $l\equiv \pm 1\pmod 8$. Applying our $4$-rank formula and Coates' order formula for totally real fields, we obtain the following results:
\begin{itemize}
	\item (Theorem~\ref{thm:special-family1}) Determine the congruent conditions for $l$ satisfying $t_2(-l)$ or $t_2(-2l)=2$, $4$ and $\geq 8$, and hence find the respective densities;
	
	\item (Theorem~\ref{thm:l78}) Determine the conditions for $l\equiv 7\pmod{8}$ satisfying  $t_2(l)=4, 8$ and $\geq 16$, and deduce the formula
	\begin{equation} t_2(l)\equiv 2t_2(2l)\equiv h_2(-2l)\pmod{16}. \end{equation}
	
	\item (Proposition~\ref{prop:special-fam})  Determine the conditions for $l\equiv 1\pmod{8}$ satisfying  $t_2(l)$ or $t_2(2l)=2$ or $4$.
\end{itemize}
Here Theorem~\ref{thm:special-family1} is new, Theorem~\ref{thm:l78} is an improvement of the result in \cite{LW82} and Proposition~\ref{prop:special-fam} is essentially a summary of the results in \cite{KW82}, \cite{LW82} and \cite{Wil81} using the language of $\CT_2$-groups.

For the real case, we then have the following density result which is inspired by the work on the distribution of $2$-adic valuation of $L_2(1,\chi_l)$ in \cite{SSW99}.
\begin{thm} \label{thm:i=0,1}
		For  $i\in \{0,1\}$ and $e\in \{0, 1\}$,
		\begin{equation}\label{eq:densityaab}
			\lim\limits_{x \rightarrow \infty}\frac{\#\{l \leq x: l \equiv (-1)^e \pmod 8,\ t_2(l)= 2^{i+1+e}\}}{\#\{l \leq x: l \equiv (-1)^e \pmod 8\}}=\frac{1}{2^{i+1}}.
		\end{equation}
		\begin{equation}\label{eq:density2aaa}
			\lim\limits_{x \rightarrow \infty}\frac{\#\{l \leq x: l \equiv (-1)^e \pmod 8,\ t_2(2l)=2^{i+1}\}}{\#\{l \leq x: l \equiv (-1)^e \pmod 8\}}=\frac{1}{2^{i+1}}.
		\end{equation}	
	\end{thm}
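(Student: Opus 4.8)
The plan is to reduce each density statement to a question about the distribution of a single Rédei-type symbol, and then invoke a classical equidistribution result (e.g. the quadratic reciprocity/Chebotarev argument used by Rédei, or the averaging results already cited from \cite{Ger84} and \cite{YY04}). The starting point is that for $l\equiv \pm 1\bmod 8$ the groups $\CT_2(l)$ and $\CT_2(2l)$ are cyclic $2$-groups by the $2$-rank formula \eqref{eq:2-rankT}, so their orders are determined by a chain of $4$-rank, $8$-rank, \dots{} conditions. The results quoted just before the theorem — Theorem~\ref{thm:l78} for $l\equiv 7\bmod 8$, Proposition~\ref{prop:special-fam} for $l\equiv 1\bmod 8$, together with the $4$-rank formula and Coates' order formula — give, for each of the four combinations of $e\in\{0,1\}$ and the target exponent, an \emph{explicit} set of congruence and Legendre-symbol conditions on $l$ that is equivalent to $t_2(l)=2^{i+1+e}$ (resp.\ $t_2(2l)=2^{i+1}$). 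So the first step is simply to write out, from those earlier results, the precise splitting: the event ``$t_2(l)\ge 2^{i+1+e}$'' should be cut out by $i+e$ independent Rédei symbols each taking the value forcing divisibility, and the event ``$t_2(l)=2^{i+1+e}$'' is that conjunction together with one further symbol taking the opposite value.

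Next I would set up the counting. Fix $e$ and restrict to $l\equiv(-1)^e\bmod 8$; among such primes $l\le x$ the conditions coming from Step~1 are governed by the splitting of $l$ in a fixed finite abelian extension (a compositum of $\BQ(\sqrt{d})$'s and at most one $\BQ(\sqrt{-1},\sqrt{2})$-type field, since only $4$-rank and $8$-rank conditions enter for $i\le 1$), so by Chebotarev's density theorem the proportion of $l\le x$ satisfying any fixed consistent collection of $k$ such independent symbol conditions tends to $2^{-k}$. The only subtlety is independence: one must check that the Rédei symbols appearing in the characterization of $t_2(l)=2^{i+1+e}$ are genuinely independent conditions on $l$ (i.e.\ the corresponding quadratic characters of $l$ are multiplicatively independent modulo the fixed congruence $l\equiv(-1)^e\bmod8$), and that the ``$\ge$'' event and the terminating event differ by exactly one such independent symbol. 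Granting this, $t_2(l)=2^{i+1+e}$ corresponds to $i+e$ symbols forced one way and one symbol forced the other, giving density $2^{-(i+e+1)}\cdot 2^{?}$; a short bookkeeping check (using that $l\equiv(-1)^e\bmod 8$ itself already pins down one bit, which is where the $e$-dependence in the exponent of the numerator but not the denominator comes from) yields exactly $1/2^{i+1}$. The case of $t_2(2l)$ is identical except the ramification at $2$ shifts the chain by one step, removing the $e$-shift and leaving $1/2^{i+1}$.

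The main obstacle I anticipate is the independence/consistency verification in Step~2: one has to be sure that the finitely many Legendre-symbol conditions produced by Theorem~\ref{thm:l78} and Proposition~\ref{prop:special-fam} are not secretly linked by quadratic reciprocity or by the fixed congruence class of $l$, since a hidden relation would change $2^{-k}$ to $0$ or $2^{-(k-1)}$. Concretely this means computing the $\BF_2$-rank of the matrix of exponent vectors of these characters and confirming it equals the number of ``free'' conditions; this is a finite computation but it is the crux, because it is exactly the place where the specific arithmetic of $l\equiv 7\bmod 8$ versus $l\equiv 1\bmod 8$ (and hence the clean uniform answer $1/2^{i+1}$) is being used. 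Everything after that — passing from ``density among primes in a fixed residue class satisfying fixed split conditions'' to the stated limit — is a direct application of Chebotarev, so I would state it as a lemma and cite the analogous arguments in \cite{Ger84} and \cite{YY04}.
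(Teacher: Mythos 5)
Your step 1 (reducing to the explicit criteria of Theorem~\ref{thm:l78} and Proposition~\ref{prop:special-fam}) is exactly how the paper starts, and your step 2 does work for the $i=0$ cases: there the criteria are either pure congruences ($t_2(l)=4$ iff $l\equiv 7\bmod 16$ when $e=1$) or an $8$-rank condition with a genuine governing field ($h_2(-l)\geq 8$ iff $l$ splits completely in $\BQ(\zeta_8,\sqrt{1+i})$, by Stevenhagen \cite{Ste93}), so Dirichlet/Chebotarev gives the density $1/2$. But your proposal has a genuine gap at $i=1$: the ``one further symbol'' separating $t_2=2^{i+1+e}$ from the larger values is a $16$-rank--type condition, e.g.\ $(-1)^{(l+1)/16}\left(\frac{2u}{v}\right)$ with $l=u^2-2v^2$ for $e=1$, or $(-1)^{(l-1)/8}\left(\frac{2h}{g}\right)\left(\frac{g}{l}\right)_4$ with $l=2g^2-h^2$ and $\left(\frac{u}{l}\right)_4$ for the $t_2(l)$ and $t_2(2l)$ cases with $l\equiv 1\bmod 8$. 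These symbols are built from representations of $l$ by binary quadratic forms and are \emph{not} quadratic characters of $l$, nor splitting conditions in any fixed number field; no governing field is known (or expected) for $16$-rank. Consequently the crux of your plan --- computing an $\BF_2$-rank of exponent vectors and invoking Chebotarev to get $2^{-k}$ --- is not merely delicate, it is unavailable: Chebotarev cannot see these conditions at all, and the averaging results of \cite{Ger84} and \cite{YY04} do not apply to this prime family.

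What the paper actually does at this point is import the recent analytic theorems on $16$-ranks proved via spin symbols in the style of Friedlander--Iwaniec: Koymans \cite{Koy20} for $h_2(-l)$ with $l\equiv 1\bmod 8$, Milovic \cite{Mil17} for $h_2(-2l)$ with $l\equiv 7\bmod 8$, and Koymans--Milovic \cite{KM17} for the $t_2(2l)$, $l\equiv 1\bmod 8$ case. Moreover, for $e=0$, $i=1$ the paper needs a joint equidistribution of the $16$-rank condition with the congruence class of $l$ modulo $16$, which is not literally in \cite{Koy20}; it is obtained by rerunning Koymans' argument with a twisted spin symbol $[w]'=[w]\cdot\lambda(w)$, $\lambda(w)=(-1)^{(Nw-1)/8}$, to bound the twisted character sum. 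So the correct proof requires these deep analytic inputs (and a nontrivial modification of one of them), not a finite independence check plus Chebotarev; your proposal would only yield the $i=0$ half of the theorem.
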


In the last section, we shall present several conjectures in light of the density results we proved in the spirit of Cohen-Lenstra heuristics. We shall  present computational evidence for our conjectures in the Appendix.

\subsection*{Acknowledgment.}{The authors are grateful to the anonymous referees for their very helpful suggestions and remarks.
The authors are partially supported by Anhui Initiative in Quantum Information Technologies (Grant No. AHY150200).}

\section{The rank and density formulas for quadratic imaginary fields }\label{sec:proof-Redei}
\subsection{Notations} We shall use the following notations.

(1) For a general number field $F$, $\CO_F$ is the ring of integers of $F$,  $\CO_F^\times$ is the group of units of $F$, $r_1$ and $r_2$ are the numbers of real and complex places of $F$,   $n=r_1+2r_2=[F:\BQ]$.
For a finite place $v$ of $F$, we let $U_v$ and $U_{1,v}$ be the groups of local units and principal local units. For $v$ infinite, let  $U_v = F^\times_v$. Let $\BA_F$ be the ad\`ele ring of $F$. The id\`elic group of $F$, as the units of $\BA_F$, is denoted by $\BA_F^\times$.

Let $F^+=\{\alpha\in F\mid v(\alpha)>0\ \text{for all real places}\ v\ \text{of}\ F \}$ be the subgroup of $F^\times$ of totally real elements. Hence $F^\times/F^+$ is an $\BF_2$-vector space of dimension $r_1$, by the approximation theorem.

Let $S=S_p$ be the set of primes of $F$ lying above $p$. Let $\CO_{S}, E_{S}, \Cl_{S}$ and $\Cl^{+}_{S}$ denote the ring of $S$-integers, the group of $S$-units, the $S$-class group, and the narrow $S$-class group of $F$, respectively.  Let $E_S^+=E_S\cap F^+$. Let  $U_{1,S}=\prod_{v\in S}U_{1,v}$.

(2) In the special case that $F$ is a quadratic field, write  $F=\BQ(\sqrt{m})$, then $(r_1,r_2)=(2,0)$ if $m>0$ and $(0,1)$ if $m<0$. Let $G=\Gal(F/\BQ)=\{1,\sigma\}$. Let $\Cl(m)$, $\Cl_p(m)$, $h(m)$, $h_p(m)$ and $\CT_p(m)$ be the class group, the $p$-class group, the class number, the $p$-class number and the $\CT_p$-group of $F=\BQ(\sqrt{m})$ respectively. Let $t_p(m)=\#\CT_p(m)$.

If $p=2$,  the size of $S$ is  $2$ if $2$ splits  and $1$ if $2$ is not split in $F$.  If $F$ is imaginary, $F^+=F^\times$ and $\sigma$ is the restriction of complex conjugation on $F$.

(3) For any abelian group $A$, $A[n]$ is the $n$-torsion subgroup of $A$ and $A[p^\infty]$ is the $p$-primary part of $A$. For a finite abelian group $A$ and a positive integer $i$, the $p^i$-rank $\rk_{p^i}(A):=\dim_{\BF_p} p^{i-1}A/p^i A$. If $A$ is an $\BF_2$-vector space, $\dim A:=\dim_{\BF_2} A$ is its dimension.

(4) For Jacobi, $2$-nd Hilbert and Artin -symbols with values in $\mu_2=\{1,-1\}$, we use $[\ ]$ instead of $(\ )$ to represent  the corresponding additive symbols with values in $\BF_2=\{0,1\}$.

\subsection{The $2$-rank and $4$-rank formulas in general}
For $F$ a general number field, we  recall some  facts about $\CT_p(F)$.
All are standard consequences of global class field theory; see, for example, \cite[Theorem 13.4]{Was97}. The closed subgroup $ \overline{F^\times \prod_{v\notin S} U_v}$ of $\BA^\times_F$ corresponds to the maximal abelian extension of $F$ unramified outside $S$. Set
\begin{equation}\label{eq:2.1}\CA_F:=\BA^\times_F\bigg/ \overline{F^\times \prod_{v\notin S} U_v}. \end{equation}
As known in the proof of \cite[Theorem 13.4]{Was97}, the induced Artin map $\CA_F \twoheadrightarrow \Gal(M/F)$
is surjective and has finite kernel of  prime-to-$p$ order, thus induces a canonical isomorphism
 \[ {\CA}^{\mathrm{pro-}p}_F\cong \Gal(M/F), \]
 where ${\CA}^{\mathrm{pro-}p}_F$ is the pro-$p$-part of $\CA_F$.
 Let $H$ be the $p$-Hilbert class field of $F$. Then $\Gal(H/F)\cong  \Cl_p(F)$ canonically.  Let   $\phi$ be the canonical diagonal embedding $F \hookrightarrow \prod_{v\in S}F_v$ and
 $E_{1,S}=\phi^{-1}(U_{1,S})\cap \CO^\times_F$.  By class field theory, the following diagram is commutative with exact rows:
\begin{equation}\label{eq:2.3}
\begin{tikzcd}
0 \ar[r] &  U_{1,S}/{\overline{\phi(E_{1,S})}} \ar[r] \ar[d, "\cong"] &{\CA}^{\mathrm{pro-}p}_F \ar[r]\ar[d, "\cong"] &  \Cl_p(F)  \ar[r] \ar[d, "\cong"] & 0 \\
0 \ar[r] &  \Gal(M/H) \ar[r] & \Gal(M/F)  \ar[r]&  \Gal(H/F)  \ar[r] & 0.
\end{tikzcd}
\end{equation}
The group $U_{1,S}$ is a finitely generated $\BZ_p$-module of rank $n=r_1+2r_2$ and the submodule ${\overline{\phi(E_{1,S})}}$ is of  rank $r_1+r_2-1-\delta_p(F)$ for some integer $\delta_p(F)\geq 0$. It follows that $\Gal(M/F)$ is a finitely generated $\BZ_p$-module of rank $r_2 +1+\delta_p(F)$. Leopoldt conjectured that $\delta_p(F)$ is always $0$ and this has been proved when $F$ is abelian over $\BQ$.  Thus $\CT_p(F)$, by definition the torsion subgroup of $\Gal(M/F)$, is finite and
\begin{equation}\label{eq:2.2}
\CT_p(F) \cong \CA_F [p^\infty],
\end{equation}
and the $p$-rank of $\CT_p(F)$ is given by
\begin{equation}\label{eq:2.4}
\rk_p( \CT_p(F) ) = \rk_p( \Gal( M/F ) ) -r_2-1-\delta_p(F).
\end{equation}

From now on,  we identify $\CA_F[p^\infty]$ with $\CT_p(F)$. By abuse of notation, we write $\CA_F$ and $\Gal(M/F)$ additively. Let $L$ be the maximal abelian extension of $F$ which is of exponent $p$ and unramified outside $p$. Then $L$ is the intermediate field of $M/F$ fixed by $p\Gal(M/F)$. The induced Artin map $\CA_F\rightarrow \Gal(M/F)$ has kernel consisting of prime-to-$p$-torsion elements, hence is contained in $p\CA_F$ and  the induced map $\CA_F/p\CA_F \rightarrow \Gal(L/F)$ is an isomorphism. The kernel of the composite map
 \[ \varphi: \CT_p(F)[p]\hookrightarrow\ \CA_F\rightarrow \CA_F/p\CA_F=\Gal(L/F) \]
is $\CT_p(F)[p]\cap p\CA_F= p\CT_{p}(F)[p^2]$, which is an $\BF_p$-space of dimension $\rk_{p^2}(\CT_p(F))$. This gives the identity
\begin{equation} \label{eq:p2rank}
	\rk_{p^2}(\CT_p(F))=\rk_p(\CT_p(F))-\dim_{\BF_p} \Im(\varphi).
\end{equation}

 We first derive the $2$ and $4$-rank formulas  of $\CT_2$ for a general number field, and the $2$-rank formula for a quadratic field. The general $2$-rank formula \eqref{eq:2-rank} was proved in Gras~\cite[Th\'{e}or\`{e}me I 3]{Gra82}, and the $4$-rank formula is quite routine.
\begin{thm} \label{thm:2-rank} Let $F$ be a number field. Let $S$ be the set of primes in $F$ above $2$ and $\Cl^+_S$ the narrow $S$-class group of $F$.
	
	$(1)$ $($Gras$)$ The $2$-rank of $\CT_2(F)$ is given by the formula
	\begin{equation}\label{eq:2-rank}
		\rk_2 \CT_2(F)=\#S + \rk_2(\Cl^{+}_{S}) -1-\delta_2(F).
	\end{equation}
	In particular, if $m$ is a squarefree integer with $t$ odd prime factors, then for $F=\BQ(\sqrt{m})$,
	\begin{equation}\label{eq:2-rankT}
		\rk_2(\CT_2(F))  =
		\begin{cases}
			t & \text{ if }\ q\equiv \pm 1\pmod{8}\ \text{for all odd prime}\ q\mid m,\\
			t-1 & \text{ if }\ q\equiv \pm 3\pmod{8}\ \text{for some odd prime}\ q\mid m.
		\end{cases}
	\end{equation}
	
	$(2)$ Suppose $A$ is a finite set of id\`eles which generates $\CT_2(F)\subset \CA_F:= \BA_F^\times\big/\overline{F^\times \prod_{v\nmid 2} U_v}$. Suppose $B$ is a finite set of elements in $F^\times$ such that $F(\sqrt{B})$ is  the maximal abelian extension of $F$ of exponent $2$ unramified outside $2$. For $a\in A$ and $b\in B$, let $[a,b]=\log_{-1} (a,F(\sqrt{b}))\in \BF_2$ be the additive Artin symbol. Let $R=([a,b])_{a\in A, b\in B}$. Then
	\begin{equation} \label{eq:4-rank}
		\rk_4 (\CT_2(F)) =  \rk_2 (\CT_2(F)) - \rank(R).
	\end{equation}
\end{thm}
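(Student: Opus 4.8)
The plan is to prove the two parts separately, drawing everything from the class field theory package already assembled in \eqref{eq:2.1}, \eqref{eq:2.3}, \eqref{eq:2.4} and \eqref{eq:p2rank}. For part $(1)$, I would first reduce the computation of $\rk_2\CT_2(F)$ to that of $\dim_{\BF_2}\Gal(L/F)$. Since $\Gal(M/F)\cong \BZ_2^{\,r_2+1+\delta_2(F)}\oplus \CT_2(F)$ with torsion part $\CT_2(F)$, and $\Gal(L/F)=\Gal(M/F)/2\Gal(M/F)$, one has $\dim_{\BF_2}\Gal(L/F)=r_2+1+\delta_2(F)+\rk_2\CT_2(F)$, which is a rearrangement of \eqref{eq:2.4}. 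Thus it suffices to compute $\dim_{\BF_2}\Gal(L/F)=\dim_{\BF_2}\CA_F/2\CA_F$ from the idelic presentation \eqref{eq:2.1}. Here I would decompose $\BA_F^\times/\prod_{v\notin S}U_v$: the infinite factors and the prime-to-$S$ unit factors are killed (the former because $U_v=F_v^\times$ for $v\mid\infty$), leaving $\prod_{v\in S}F_v^\times$ together with the group of $S$-ideals, and then quotient by the diagonal image of $F^\times$. Passing to the mod-$2$ quotient, the local unit groups $F_v^\times/(F_v^\times)^2$ for $v\in S$, the $S$-ideal class data, and the global $S$-units (via Dirichlet's $S$-unit theorem, with the Leopoldt defect entering through $\overline{\phi(E_{1,S})}$) combine, by a genus-theory bookkeeping, into the three terms $\#S$, $\rk_2\Cl^+_S$ and $-1$, together with $-\delta_2(F)$. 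This recovers \eqref{eq:2-rank}; the clean final form is due to Gras~\cite{Gra82}, which I would cite.

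The step I expect to be the main obstacle is precisely this last bookkeeping: one must track, at the real places and at the primes $v\mid 2$, the interaction between signs of global units and local squares. It is this interaction that converts the ordinary $S$-class group into the \emph{narrow} group $\Cl^+_S$ and fixes the constant $\#S-1$; a naive Kummer count that ignores signs miscounts the rank, so the delicate point is to match the sign contribution of the $S$-units against $\Cl^+_S$ correctly.

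For the quadratic specialization \eqref{eq:2-rankT} I would substitute $F=\BQ(\sqrt m)$ into \eqref{eq:2-rank}. Here $\delta_2(F)=0$ since $F/\BQ$ is abelian, $\#S\in\{1,2\}$ is read off from the splitting of $2$ (equivalently from $m\bmod 8$), and $\rk_2\Cl^+_S$ is computed from Gauss's genus theory, which gives $\rk_2\Cl^+(F)$ in terms of the number of ramified primes of $F/\BQ$, followed by the adjustment coming from inverting the primes of $S$. The dichotomy in \eqref{eq:2-rankT} between ``$q\equiv\pm1\bmod 8$ for all odd $q\mid m$'' and ``$q\equiv\pm3\bmod 8$ for some odd $q\mid m$'' then emerges from whether the primes above $2$ are independent in the genus group, a condition governed by the residues mod $8$; assembling the three quantities yields $t$ in the first case and $t-1$ in the second.

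Finally, for part $(2)$ I would apply \eqref{eq:p2rank} with $p=2$, which gives $\rk_4\CT_2(F)=\rk_2\CT_2(F)-\dim_{\BF_2}\Im(\varphi)$ for the map $\varphi:\CT_2(F)[2]\hookrightarrow\CA_F\to\CA_F/2\CA_F=\Gal(L/F)$. It then remains to identify $\dim_{\BF_2}\Im(\varphi)$ with $\rank(R)$. By Kummer theory the Artin map gives a perfect pairing, valued in $\mu_2$, between $\Gal(L/F)$ and the subgroup $\overline{B}$ of $F^\times/(F^\times)^2$ generated by $B$; under it the image $\varphi(a)$ of a generator $a\in A$ pairs with $b\in B$ to the additive Artin symbol $[a,b]$. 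Hence, as $A$ generates $\CT_2(F)[2]$ and the classes of $B$ span $\overline{B}=\Gal(L/F)^\vee$, the image of $\varphi$ is detected faithfully by the matrix $R=([a,b])_{a\in A,\,b\in B}$, so $\dim_{\BF_2}\Im(\varphi)=\rank(R)$ and \eqref{eq:4-rank} follows. This part is routine once the perfectness of the Artin--Kummer pairing is invoked.
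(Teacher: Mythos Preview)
Your proof of part~(2) is essentially identical to the paper's: both invoke \eqref{eq:p2rank} with $p=2$ and then identify $\dim_{\BF_2}\Im(\varphi)$ with $\rank(R)$ via the Kummer pairing, reducing to the observation that $\Gal(L/F)\cong\prod_{b\in B}\Gal(F(\sqrt b)/F)$ once the classes of $B$ form a basis of $J$.

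For part~(1), your route differs from the paper's. You propose to compute $\dim_{\BF_2}\CA_F/2\CA_F$ directly from the idelic presentation \eqref{eq:2.1}, assembling the contributions of $\prod_{v\in S}F_v^\times/(F_v^\times)^2$, the $S$-ideal group, and the $S$-units. The paper instead works on the Kummer side: it identifies $L=F(\sqrt J)$ with $J$ as in \eqref{eq:J}, and computes $\dim J$ via the three short exact sequences \eqref{eq:2.7}, \eqref{eq:cls+}, \eqref{eq:totalpos}, which cleanly separate the unit, sign, and class-group contributions and make the appearance of the \emph{narrow} $S$-class group transparent. The two approaches are Pontryagin dual to one another and both are valid; the paper's has the advantage that the ``sign bookkeeping'' you flag as the main obstacle is handled mechanically by the sequences rather than by hand.

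For the quadratic specialization \eqref{eq:2-rankT}, your sketch is on the right track but misses the precise mechanism. The paper does not compute $\rk_2\Cl^+_S$ by starting from $\rk_2\Cl^+(F)$ and adjusting for $S$; rather, it uses that $\Cl^+_S[2]=(\Cl^+_S)^G$ (since $\BQ$ has trivial class group) and then applies the $S$-narrow ambiguous class number formula \eqref{eq:S-clgp}, which gives $\dim(\Cl^+_S)^G$ in terms of $t$, the splitting of $2$, and whether $2\in N(F)$. The dichotomy in \eqref{eq:2-rankT} is then exactly the condition $2\in N(F)$, translated via Lemma~\ref{lem:hasse} (Hasse's norm theorem plus local Hilbert symbols) into the mod-$8$ residues of the odd prime divisors of $m$. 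Your phrase ``whether the primes above $2$ are independent in the genus group'' gestures at this but is not quite the same criterion, and without the ambiguous class number formula the case analysis (split/inert/ramified $2$ crossed with $2\in N(F)$ or not) would be messier to carry out.
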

\begin{remark}\label{rmk:2.3}
	(1) The minimal size of $A$ is $\rk_2(\CT_2(F))$, and the minimal size of $B$ is $\rk_2(\Gal(M/F)))=\rk_2(\CT_2(F))+r_2(F)+1+\delta_2(F)$. Moreover, if $F(\sqrt{b})$ is contained in a $\BZ_2$-extension of $F$, then $[a,b]=0$ for all $a\in A$ and we can delete the corresponding row in $R$.
	
	(2) The  $p^2$-rank formula for $\CT_p(F)$ in the case  $\mu_p\subseteq F$ can be proved similarly,  as the kernel of the map $ \CT_p(F)[p]\hookrightarrow  \CA_F\rightarrow \CA_F/p\CA_F\cong \Gal(L/F)$  is $p\CT_p[p^2]$. Moreover, one can similarly deduce the formula $\rk_{p^{i+1}}(\CT_p(F))=\rk_{p^{i}}(\CT_p(F))-\dim_{\BF_p} \Im(\CT_p(F)[p^i]\rightarrow \Gal(L/K))$.
\end{remark}

\begin{proof}
We are in the case $p=2$. Then 	$L$ is the maximal abelian extension of $F$ of exponent $2$ unramified outside $S$. By Kummer Theory, $L=F(\sqrt{J})$, where $J$ is the finite subgroup of $F^\times/F^{\times 2}$ given by
	\begin{equation}\label{eq:J}
		J:=\{ \beta\in F^{+} \mid  \beta\CO_{S}=\fb^2 \text{ for some } \CO_S \text{-fractional ideal } \fb \text{ of  } F \}/(F^\times)^2.
\end{equation}	
		
(1) First suppose $F$ is general.  The non-degeneracy of the Kummer pairing $J \times \Gal(L/F)\rightarrow \{ \pm 1 \}$ then implies
\begin{equation}\label{eq:2.6}
\rk_2  \Gal(M/F  ) =\dim \Gal(L/F) = \dim J.
\end{equation}
Let  $\pr$ be the natural projection $\Cl^{+}_{S}\rightarrow \Cl_{S}$  and
$\Cl_{S,+}=\pr(\Cl^{+}_{S}[2])\subset \Cl_S[2]$. For $[\beta] \in J$, $\beta\CO_{S}=\fb^2$, then the class map $\cl_S(\fb)$ lies in $\Cl_{S,+}$.  This gives an exact sequence of $\BF_2$-vector spaces:
\begin{equation}\label{eq:2.7}
1 \rightarrow E^{+}_{S}/E^2_{S} \rightarrow J \xrightarrow[]{\beta\mapsto \cl_S(\fb)} \Cl_{S,+}\rightarrow 1.
\end{equation}
Let $F^\times \CO_{S}=\{ \alpha \CO_{S}\mid  \alpha \in F^\times\}$ and $F^+\CO_{S}=\{ \alpha \CO_{S}\mid  \alpha \in F^+\}$, then  $\ker \pr=F^\times \CO_{S}/{F^+\CO_{S}} \subset \Cl^+_S[2]$.  This gives an exact sequence  of $\BF_2$-vector spaces
\begin{equation} \label{eq:cls+}
 1\rightarrow F^\times \CO_{S}/{F^+\CO_{S}}\rightarrow \Cl^{+}_{S}[2]\rightarrow \Cl_{S,+}\rightarrow 1.
\end{equation}
We also have the following natural exact sequence  of $\BF_2$-vector spaces:
\begin{equation} \label{eq:totalpos}  1\rightarrow  E_{S}/E^{+}_{S}\rightarrow  F^\times/F^+ \rightarrow  F^\times \CO_{S}/{F^+\CO_{S}}  \rightarrow 1.  \end{equation}
Combining the above results, we get
\[
\begin{split}
\rk_2 \Gal(M/F) = & \dim E^{+}_{S}/E^2_{S}+ \dim  \Cl_{S,+}  \\
= &  \dim E^{+}_{S}/E^2_{S}+\dim \Cl^{+}_{S}[2]- r_1 + \dim E_{S}/E^{+}_{S} \\
= & \dim E_{S}/E^2_{S} + \dim \Cl^{+}_{S}[2]- r_1\\
= & r_2 +\#S + \dim  \Cl^{+}_{S}[2],
\end{split}
\]
where $\dim F^\times/F^+=r_1$ by the approximation theorem, and $\dim E_S/E_S^2=r_1+r_2+\#S$ by  Dirichlet's unit theorem that $E_{S} \cong \BZ^{r_1+r_2+\#S-1}\times \BZ/{d\BZ}$ with $d$ even. By \eqref{eq:2.4}, we then get the general $2$-rank formula \eqref{eq:2-rank} for $\CT_2$-group of a general base field (see \cite{Gra82} for a slightly different approach).

 Now suppose $F=\BQ(\sqrt{m})$ is a quadratic field. Then $\delta_2(F)=0$. Write $G=\Gal(F/\BQ)$.  Since $\BQ$ has class number $1$, we conclude that $\Cl^{+}_S[2]=(\Cl^{+}_S)^G$.  Recall that $t$ is the number of odd prime factors of $m$. Applying the $S$-narrow version of the ambiguous class number formula (see, for example \cite[Remark 4.5]{LY20}) gives the following result:
\begin{equation}\label{eq:S-clgp}
\dim (\Cl^{+}_{S}) ^G =
\begin{cases}
t-2,  &\text{ if } 2 \text{ splits  and } 2 \notin N(F); \\
t-1,   &\text { if } 2 \text{ splits and } 2\in N(F) \text{ or } 2\text{ does not split and } 2 \notin N(F);\\
t,  &\text{ if } 2\text{ does not split  and } 2 \in N(F).
\end{cases}
\end{equation}
By Lemma~\ref{lem:hasse} below, $2\in N(F)$ if and only if $q\equiv \pm 1\pmod{8}$ for all odd primes $q\mid m$, the $2$-rank formula \eqref{eq:2-rankT}  fo $F=\BQ(\sqrt{m})$ then follows.

(2) We may assume $\widetilde{B}=\{b\pmod{F^{\times 2}}\mid b\in B\}$ is an $\BF_2$-basis of $J$. Then
\[
\Gal(L/F)\hookrightarrow \prod\limits_{b\in B}\Gal(F(\sqrt{b})/F)
\]
is an isomorphism. Written additively, the map $\varphi$ sends $a\in \CT_2(F)[2]\subset \CA_F$ to $([a,F(\sqrt{b})])_{b\in B}$. Thus $\dim_{\BF_2}(\Im(\varphi))$ is nothing but the rank of $([a,b])_{a\in A, b\in B}$. By \eqref{eq:p2rank}, we get the $4$-rank formula.
\end{proof}

We have the following easy lemma to transform the norm conditions into congruent conditions.
\begin{lem}\label{lem:hasse}
Let $m$ be a positive squarefree integer.
Let $F=\BQ(\sqrt{-m})$ and  $\tF=\BQ(\sqrt{m})$. Then
 \begin{align*}
 	2& \in N(F)\Longleftrightarrow\  2\in N(\tF)  \Longleftrightarrow\   q\equiv \pm 1 \pmod 8\ \text{for all odd prime}\ q\mid m;\\
 	 -2 & \in N(\tF) \Longleftrightarrow\  q\equiv  1,\ 3 \pmod 8\ \text{for all odd prime}\ q\mid m;\\
 	-1 & \in N(\tF)  \Longleftrightarrow\  q\equiv 1 \pmod 4\ \text{for all odd prime}\ q\mid m. 	
 \end{align*}
\end{lem}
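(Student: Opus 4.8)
The plan is to apply the Hasse norm theorem to the cyclic degree-$2$ extensions $\BQ(\sqrt d)/\BQ$ with $d=\pm m$: for $a\in\BQ^\times$ one has $a\in N(\BQ(\sqrt d))$ if and only if $a$ is a local norm at every place of $\BQ$, i.e.\ the Hilbert symbol $(a,d)_v=1$ for all $v$. So each of the three asserted equivalences reduces to computing $(a,d)_v$ for $a\in\{2,-2,-1\}$ and $d\in\{m,-m\}$ at the places $v=\infty$, $v=2$, and $v=p$ for $p$ an odd prime, and determining for which squarefree $m>0$ they all vanish.

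First I would clear the easy places. At $v=\infty$ one has $(a,d)_\infty=1$ whenever $a>0$ or $d>0$; since $m>0$ this disposes of all four relevant pairs $(2,\pm m)$, $(-2,m)$, $(-1,m)$. At an odd prime $p\nmid 2m$ both $a$ and $d=\pm m$ are $p$-adic units, so $(a,d)_p=1$. The remaining finite places are $p=2$ and the odd primes $p\mid m$. At such a $p\mid m$, squarefreeness of $m$ gives $v_p(\pm m)=1$ while $v_p(a)=0$, so the tame symbol formula yields $(a,\pm m)_p=\leg{a}{p}$; the supplementary laws then read $\leg{2}{p}=1\iff p\equiv\pm1\bmod 8$, $\leg{-2}{p}=1\iff p\equiv1,3\bmod 8$, $\leg{-1}{p}=1\iff p\equiv1\bmod 4$, which are exactly the congruence conditions in the statement.

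For the last place $v=2$ I would avoid any $2$-adic computation and instead invoke the product formula $\prod_v(a,d)_v=1$: if the displayed congruence holds at every odd $p\mid m$, then $(a,d)_v=1$ at all places other than possibly $v=2$ by the previous paragraph, hence $(a,d)_2=1$ as well; conversely $a\in N(\BQ(\sqrt d))$ forces $(a,\pm m)_p=1$ at each odd $p\mid m$ and thus the congruence. This proves the second and third equivalences, and shows that ``$2\in N(F)$'' and ``$2\in N(\tF)$'' are governed by the identical condition, giving the first line. (One may also note $(2,-m)_v=(2,-1)_v(2,m)_v$ and $(2,-1)_v=1$ for all $v$, since $2=1^2+1^2\in N(\BQ(\sqrt{-1}))$, which yields $2\in N(F)\iff 2\in N(\tF)$ directly.) The only step needing a bit of care is the prime $2$: a direct evaluation of $(a,d)_2$ would require splitting into cases according to $m\bmod 8$ and the parity of $m$, whereas the product formula carries out that bookkeeping for free. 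Everything else is a routine use of the tame symbol and the supplementary laws of reciprocity.
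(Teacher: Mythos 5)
Your proposal is correct and follows essentially the same route as the paper: both reduce the statement via Hasse's norm theorem to local conditions, evaluate the symbols at the odd primes dividing $m$ (where they become the supplementary Legendre-symbol conditions), and dispose of the place $2$ by the product formula rather than by direct $2$-adic computation. The only cosmetic difference is that you phrase the local conditions as Hilbert symbols over $\BQ$, while the paper speaks of local norms at places of $F$.
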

\begin{proof}
By Hasse's norm theorem and the product formula, $2\in N(F)$ if and only if $2\in N(F_v)$ for all but one prime $v$ of $F$. If $v\nmid 2m$, then  $v$ is always unramified and $2\in N(F_v)$ by local class field theory. For an odd prime $q\mid m$, $q$ is ramified in $F$. Let $\fq$ be the unique ramified prime of $F$ above $q$. Then $2\in N(F_\fq)$ if and only if the Hilbert symbol $(2,-m)_{q}=1$, which is equivalent to that $q\equiv \pm 1\pmod 8$. If $2$ splits in $F$, then $v\mid 2$ is unramified and $2\in N(F_v)$; in other cases, there is only one prime $v$ above $2$ which can be excluded from consideration. Hence $2\in N(F)$  if and only if $q\equiv \pm 1 \pmod 8$  for every odd prime $q\mid m$. The other cases can be proved similarly.
\end{proof}
\subsection{The explicit $4$-rank formula for imaginary quadratic fields}
We turn to work on the imaginary quadratic field case. We shall work out $A$ and $B$ explicitly for   an imaginary quadratic field and hence obtain an explicit $4$-rank formula in this case. This explicit formula  will be used to deduce the $4$-rank density formula of $\CT_2$-groups of imaginary quadratic fields in next subsection.

We suppose $m>0$ and $F=\BQ(\sqrt{-m})$. Let $\{q_1,\cdots, q_t\}$ be the set of odd prime factors of $m$, arranged in such a way  that $q_i\equiv \pm 1\pmod 8$ if $1\leq i \leq k$ and $\pm 3\pmod 8 $ if $k<i\leq t$. Note that $k=0$ if $q\equiv \pm 3\pmod 8$ for all $q\mid m$. Let $\fp$ be a prime of $F$ above $2$. Then $\fp$ is either the unique prime above $2$ or $(2)=\fp \bar{\fp}$ splits in $F$ where $\bar{\fp}\neq \fp$ is the complex conjugate of $\fp$. Let $\fq_i$ be the unique prime of $F$ above $q_i$. For an odd prime $q$, let  $q^* = (-1)^{(q-1)/2}q$. Then $q^*_i\ (1\leq i\leq k)$ and $q^*_j q^*_{j'}\ (k<j,j'\leq t)$ are squares in the $2$-adic field $\BQ_2$.

Our explicit $4$-rank formula for $\CT_2(\BQ(\sqrt{-m}))$ is
\begin{thm}  \label{thm:4-rankT} Suppose $F=\BQ(\sqrt{-m})$. For $0\leq i\leq t$, we define the  id\`eles $a_i=(a_{i,v})\in \BA^\times_F$  as follows:
	\begin{enumerate}
		\item $a_{0,\fp}=\sqrt{-1}$ if $F_\fp \cong  \BQ_2(\sqrt{-1})$, and  $a_{0,\fp}=-1$  if $2=\fp\bar{\fp}$ splits in $F$;
		
		\item if $1\leq i\leq k$, $a_{i, \fq_i}=\sqrt{-m}$ and $a_{i,v}=\sqrt{q_i^*}$ for $v\mid 2$;
		
		\item if $k<i<t$, $a_{i, \fq_i}=a_{i,\fq_t}=\sqrt{-m}$ and  $a_{i,v}=\sqrt{q_i^* q_t^*}$ for $v\mid 2$;
		
		\item for all other places $v$, $a_{i,v}=1$. In particular, $a_t=1$	if $k<t$.
	\end{enumerate}
	Let $\pi$ be a generator of $\fp^\lambda$ where $\lambda$ is the order of $\fp$ in the class group of $F$.  If $2$ is a norm of $F$, noting that $m$ is a norm of $\BZ[\sqrt{2}]$, write $m=2g^2-h^2$ with $g,h\in \BZ_{>0}$ and define
	\begin{equation}\label{eq:alpha}
		\alpha =
		\begin{cases}
			h+\sqrt{-m}, & \mbox{ if } 2\in N(F)\setminus N((\CO_F\left[\frac{1}{2}\right])^\times), \\
			1, & \mbox{otherwise}.
		\end{cases}
	\end{equation}	
	Let
	\begin{equation}\label{eq:AB}
		A=\{ a_0, \cdots, a_t\} \subset \BA^\times_F, \quad \quad  B = \{-1, q_1, \cdots, q_t, \pi, \alpha\} \subset F^\times.
	\end{equation}
	Then $A$ and $B\cup\{2\}$ satisfy the assumptions in Theorem~\ref{thm:2-rank}(2), and $[a,2]=0$ for $a\in A$. Hence
 \begin{equation}
 	\rk_4 (\CT_2(F)) =  \rk_2 (\CT_2(F)) - \rank(R)\ \text{where}\ R=\left([a,b]\right)_{a\in A, b\in B}.
 \end{equation}	
\end{thm}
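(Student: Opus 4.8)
The plan is to verify the two hypotheses of Theorem~\ref{thm:2-rank}(2): first, that the idèles $a_0,\dots,a_t$ generate $\CT_2(F)[2]$ inside $\CA_F$; second, that $F(\sqrt{B\cup\{2\}})$ is the maximal abelian exponent-$2$ extension of $F$ unramified outside $2$, i.e.\ that the classes $-1,q_1,\dots,q_t,\pi,\alpha,2$ span the group $J$ of \eqref{eq:J}. Once these are in place, the $4$-rank formula is immediate from \eqref{eq:4-rank}; the only extra claim is the vanishing $[a,2]=0$ for all $a\in A$, which I would handle at the end.

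For the generating set $A$: by \eqref{eq:2.2} and the exact sequence \eqref{eq:2.3}, $\CT_2(F)[2]\subset \CA_F$ is an extension of $\Cl_2(F)[2]$ by $U_{1,S}/\overline{\phi(E_{1,S})}$ modulo $2$. I would first identify the ``local'' part: over an imaginary quadratic field the module $U_{1,S}$ has $\BZ_2$-rank $2$, and modulo the (rank $r_1+r_2-1-\delta_2=0$) image of units it contributes a piece detected by whether $\sqrt{-1}\in F_\fp$ or whether $2$ splits; the idèle $a_0$, supported only at primes above $2$, is designed to hit exactly this. The idèles $a_1,\dots,a_t$ are supported at one (or two) ramified finite primes $\fq_i$ together with the primes above $2$, with local components $\sqrt{-m}$ and $\sqrt{q_i^*}$ (resp.\ $\sqrt{q_i^*q_t^*}$); the point is that $q_i^*$ (for $i\le k$) and $q_i^*q_t^*$ (for $i>k$) are squares in $\BQ_2$, so these components actually lie in $F_v^\times$, and the idèle is well defined modulo $\prod_{v\nmid 2}U_v$. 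One checks these idèles are $2$-torsion in $\CA_F$ and that, via the Artin map, their images in $\CA_F/2\CA_F=\Gal(L/F)$ together with the ramified-prime classes exhaust $\Cl_2(F)[2]$ through the ambiguous class number count \eqref{eq:S-clgp}; combined with a dimension comparison against $\rk_2(\CT_2(F))$ from \eqref{eq:2-rankT}, this forces $A$ to generate. The bookkeeping splits into the cases $2\in N(F)$ vs.\ $2\notin N(F)$, and within the former, whether $2\in N((\CO_F[\tfrac12])^\times)$, which is exactly why $\alpha$ is defined as in \eqref{eq:alpha}.

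For the Kummer side $B\cup\{2\}$: I would show $\{-1,q_1,\dots,q_t,\pi,\alpha,2\}$ spans $J$ by exhibiting a filtration matching \eqref{eq:2.7} and \eqref{eq:cls+}. The classes $q_1,\dots,q_t$ generate the contribution from ramified primes (each $q_i\CO_S$ is the square of the $\CO_S$-ideal $\fq_i$ when $q_i\nmid 2$), $\pi$ accounts for the class of $\fp$ of order $f$ in $\Cl(F)$ (so $\pi\CO_S$ is a square once we invert the primes above $2$), $-1$ and $\alpha$ supply the $S$-unit classes $E_S^+/E_S^2$ — here $-1$ is always a unit, and $\alpha=h+\sqrt{-m}$ with $m=2g^2-h^2$ has norm $h^2+m=2g^2$, so $\alpha$ becomes an $S$-unit and its class is the one not already coming from $2$ itself. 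A direct dimension count, using Dirichlet's $S$-unit theorem and \eqref{eq:2.6}, shows this spanning set has the right size, hence is a basis (or at least a spanning set, which is all Theorem~\ref{thm:2-rank}(2) requires). Finally, $[a,2]=0$: for each $a\in A$ one computes the additive Artin symbol $\log_{-1}(a,F(\sqrt2))$ place by place; since $F(\sqrt2)/F$ is unramified outside $2$ and the components of $a$ at primes above $2$ are square (being $\sqrt{-1}$, $\pm1$, or square roots of $\BQ_2$-squares), the local symbols all vanish, and by Remark~\ref{rmk:2.3}(1) one could alternatively argue $F(\sqrt2)$ lies in a $\BZ_2$-extension of $F$.

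The main obstacle, I expect, is the precise case analysis identifying the generators of the local part $U_{1,S}/\overline{\phi(E_{1,S})}$ modulo $2$ and matching it to $a_0$ (and the choice of $\alpha$): one must carefully track whether $2$ splits, whether $\sqrt{-1}\in F_\fp$, and whether $2$ lies in $N((\CO_F[\tfrac12])^\times)$ as opposed to merely $N(F)$, since these three conditions interact and determine both the rank contributions and which idèle/Kummer element is needed. The rest — verifying $2$-torsion, the place-by-place Artin symbol computations, and the dimension counts — is routine once the correct normalization is fixed.
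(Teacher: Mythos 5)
Your overall architecture is the same as the paper's: verify the two hypotheses of Theorem~\ref{thm:2-rank}(2) separately (the paper's Propositions~\ref{prop:selmer} and~\ref{prop:CT2}) and dispose of $[a,2]=0$ via Remark~\ref{rmk:2.3}(1), which is exactly the paper's argument (your primary justification that the components at $v\mid 2$ "are squares" is not right --- $\sqrt{-1}$ is not a square in $\BQ_2(\sqrt{-1})$ --- but the $\BZ_2$-extension alternative you mention is the correct one). Your sketch for the generating set $A$ (kernel of \eqref{eq:2.10} hit by $a_0$, images of the $a_i$ among ramified-prime classes, dimension comparison with \eqref{eq:2-rankT}) is in the spirit of Proposition~\ref{prop:CT2}, though the paper needs Lemma~\ref{lem:Cl2} to know which ramified primes give a basis of $\Cl(F)[2]$ and the case split there is on $m\bmod 4$ and $m \bmod 8$, not on norm conditions; also the images of the $a_i$ do not in general exhaust $\Cl(F)[2]$, one only matches dimensions.

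The genuine gap is on the Kummer side, at the element $\alpha$. You assert that $\alpha=h+\sqrt{-m}$ "becomes an $S$-unit" because $N(\alpha)=2g^2$; this is false in general, since $g$ is an arbitrary positive integer and $(\alpha)$ is divisible by primes above the odd prime factors of $g$. So "$-1$ and $\alpha$ supply the $S$-unit classes" is wrong ($E_S/E_S^2$ is already spanned by $-1,2,\pi$, so on your reading $\alpha$ would be redundant), and the dimension count cannot close the argument: $B\cup\{2\}$ has $t+4$ elements while $\dim J\leq t+2$, so "right size hence spanning" is unavailable; one must actually prove that the image of $\langle B\cup\{2\}\rangle$ under the map $J\to \Cl_S[2]$ of \eqref{eq:2.12} is all of $\Cl_S[2]$. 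This is precisely where $\alpha$ is needed. First, it is not even obvious that $F(\sqrt{\alpha})/F$ is unramified outside $2$: the paper proves $(\alpha)=\fp\fa^2$ (when $2$ ramifies) or $2\fp\fa^2$ (when $2$ splits), whence $\alpha\CO_S=\fa^2\CO_S$. Second, in the case $2\in N(F)\setminus N(E_S)$ the ambiguous class computation \eqref{eq:genus1} shows the ramified primes generate only an index-$2$ subgroup of $\Cl_S^G=\Cl_S[2]$, and one must show $\cl_S(\fa)$ lies outside that subgroup; the paper does this by a contradiction argument (otherwise $\cl(\fa)^2=\cl(\fp)^{-1}$ forces $\fp^{2r_0+1}$ to be principal, giving $2\in N(E_S)$). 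Neither of these steps appears in your plan, and your stated reason for including $\alpha$ points in the wrong direction, so the proof of $L=F(\sqrt{B\cup\{2\}})$ is missing its essential content.
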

\begin{remark}
	For $F$ a general real quadratic field, it is still quite easy to find $B$, but the harder part is to find a set of generators $A$ for $\CT_2(F)[2]$. One reason is that it is not known how to obtain a system of explicit generators $\Cl(F)[2]$ for an arbitrary real quadratic field $F$ by a general formula.
\end{remark}

If $t=1$, then $F=\BQ(\sqrt{-1})$ or $F=\BQ(\sqrt{-2})$. In this case Theorem~\ref{thm:4-rankT} can be verified directly. We shall assume $t> 1$ in what follows. For an ideal $\fa$ of $F$, let $\cl( \fa)$ be its ideal class in $\Cl(F)$, and  $\cl_S(\fa)$ be its class in the $S$-class group $\Cl_S$ of $F$.

Theorem~\ref{thm:4-rankT} is then a consequence of the following three propositions.

\begin{prop}\label{prop:selmer}	
	Let $L$ be the maximal abelian extension of exponent $2$ over $F$, unramified outside $S$. Then $L=F(\sqrt{B'})$ where $B'=B\cup \{2 \}=\{-1, 2, q_1,\cdots, q_t, \pi,\alpha \}$.
\end{prop}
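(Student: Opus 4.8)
The plan is to verify that $L = F(\sqrt{B'})$ by counting dimensions and checking that each generator in $B'$ actually lies in the Kummer group $J$ of \eqref{eq:J}. Recall from Theorem~\ref{thm:2-rank} that $L = F(\sqrt{J})$ and $\dim_{\BF_2} J = \rk_2\Gal(M/F) = \rk_2(\CT_2(F)) + 2$ (here $r_2 = 1$, $\delta_2 = 0$), which by \eqref{eq:2-rankT} equals $t+2$ if every odd prime factor of $m$ is $\equiv\pm1\bmod 8$ and $t+1$ otherwise; equivalently, $\dim J = t+2$ when $2\in N(F)$ and $t+1$ when $2\notin N(F)$, using Lemma~\ref{lem:hasse}. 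So it suffices to show (i) each element of $B'$ lies in $J$, i.e. is totally positive (automatic here since $F$ is imaginary, so $F^+ = F^\times$) and has $\beta\CO_S = \fb^2$ for some $\CO_S$-ideal $\fb$; (ii) the images of $B'$ in $F^\times/F^{\times 2}$ are linearly independent over $\BF_2$; and (iii) $\#B' = \dim J$, with the caveat that $\alpha = 1$ (contributing nothing) exactly when $2\notin N(F)\setminus N((\CO_F[\tfrac12])^\times)$, so the count has to be done case by case.

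First I would handle membership in $J$. For $q_i$: it is ramified in $F$, so $q_i\CO_F = \fq_i^2$, hence $q_i\CO_S = (\fq_i\CO_S)^2$ — fine. For $-1$: when $F_\fp\cong\BQ_2(\sqrt{-1})$, i.e. $-1\notin N(F)$ locally... actually $-1$ is a square in $F_v$ for $v\mid 2$ when $2$ splits, and one checks $(-1)\CO_S$ is trivial in $\Cl_S$ modulo squares — more directly, $\sqrt{-1}$ generates an everywhere-unramified-outside-$S$ quadratic extension, or one notes $-m\cdot(-1) = m$ relates it to the $q_i$; the cleanest route is to observe $F(\sqrt{-1})/F$ is unramified outside $2$ since $F(\sqrt{-1}) = F(\sqrt{1}) $ ... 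I would instead verify $-1\in J$ by noting $F(\sqrt{-1})$ is ramified only at primes above $2$ (as $\disc(\BQ(i))$ is a power of $2$) and that $-1$ is totally positive. For $\pi$: by construction $\fp^f = (\pi)$ with $f$ the order of $\fp\in\Cl(F)$, so in $\Cl_S$ the class of $\fp$ is trivial, and $\pi\CO_S$ is a square of an $\CO_S$-ideal iff $f$ is even or $\fp$ itself becomes a square — here the point is that $\CO_S$ inverts $\fp$, so $\pi\CO_S = \CO_S$ is trivially a square; hence $\pi\in J$. For $2$: $2\CO_S = \CO_S$ is a square, and $2$ is totally positive, so $2\in J$. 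For $\alpha = h+\sqrt{-m}$ in the case $2\in N(F)\setminus N((\CO_F[\tfrac12])^\times)$: from $m = 2g^2 - h^2$ we get $N(\alpha) = h^2 + m = 2g^2$, so $\alpha\bar\alpha\CO_S = 2g^2\CO_S = g^2\CO_S$ is a square in $\CO_S$; combined with the fact that $\fp\mid\alpha$ possibly, one deduces $\alpha\CO_S$ is a square of an $\CO_S$-ideal — this is the most delicate verification.

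Next I would prove linear independence of $\{-1, 2, q_1, \dots, q_t, \pi, \alpha\}$ in $F^\times/F^{\times 2}$. For the subset $\{-1, 2, q_1, \dots, q_t\}$, a relation $\prod$ (subset) $= \square$ in $F^\times$ pushes down, via norms or via comparing ramification at the primes $q_i$ (each $q_i$ divides $\sqrt{q_i}$ to odd order but the others to even order in $F$), to a relation in $\BQ^\times/\BQ^{\times 2}$ among $\{-1, 2, q_1, \dots, q_t\}$ possibly twisted by $-m = -\prod q_i^{(\text{with }2\text{'s})}$; since $\sqrt{-m}\in F$, the only nontrivial relation modulo $F^{\times 2}$ would be $-m \equiv 1$, and one checks this does not collapse the listed set — so these $t+2$ elements are independent, except we must subtract $1$ because $-m$ (a specific product of $-1$, $2$'s and the $q_i$) is a square in $F$: that gives exactly $t+1$ independent classes from $\{-1,2,q_1,\dots,q_t\}$. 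Then $\pi$ is independent of these because $\pi$ has odd valuation at $\fp$ while every element of the span of $\{-1,2,q_1,\dots,q_t\}$ has even valuation at $\fp$ (note $2$ has valuation $e(\fp/2)\in\{1,2\}$ — so one must be slightly careful when $2$ is ramified; in the split case $v_\fp(2) = 1$ and I would use $v_{\bar\fp}$ instead, or pair $\fp$ with $\bar\fp$). Finally $\alpha$, when present, is independent because $v_\fp(\alpha)$ is odd — this is precisely the content of $2\in N(F)$ but $2\notin N((\CO_F[\tfrac12])^\times)$: the element of norm $2g^2$ cannot be adjusted by $S$-units and rationals to have even $\fp$-valuation. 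The hard part will be bookkeeping the two exceptional behaviors — whether $2$ splits (so $\#S=2$, shifting the dimension counts) and whether $2\in N(F)$, $2\in N((\CO_F[\tfrac12])^\times)$ — and checking that in each of the four resulting cases the independent count of $B'$ matches $\dim J$ from \eqref{eq:2-rankT}; the norm-residue computation placing $\alpha$ exactly in the gap $N(F)\setminus N((\CO_F[\tfrac12])^\times)$ is the crux, and I would lean on Lemma~\ref{lem:hasse} and a local analysis at $\fp$ to pin it down.
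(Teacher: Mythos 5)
Your overall strategy (verify $B'\subset J$, then count dimensions against $\dim J=\rk_2\Gal(M/F)=t+2$ or $t+1$ as given by the $2$-rank formula and Lemma~\ref{lem:hasse}) is genuinely different from the paper, which instead proves surjectivity of $J'\to \Cl_S[2]$ directly via the ambiguous-class isomorphism \eqref{eq:genus1} and the claim $\cl_S(\fa)\notin\cl_S(I^G_S)$. The counting route is viable in principle, and your treatment of the rational generators is essentially right (the kernel of $\BQ^\times/\BQ^{\times2}\to F^\times/F^{\times2}$ is $\{1,-m\}$, so $\{-1,2,q_1,\dots,q_t\}$ spans exactly $t+1$ dimensions). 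But as written there are two genuine gaps, and they sit exactly where the proposition has content.

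First, the membership $\alpha\in J$ is only flagged, not proved: knowing $\alpha\bar\alpha\CO_S=g^2\CO_S$ is a square does not make $\alpha\CO_S$ a square. One needs that $(\alpha)$ and $(\bar\alpha)$ are coprime away from $2$ (from $\alpha+\bar\alpha=2h$, $\alpha\bar\alpha=2g^2$, $\gcd(g,h)=1$), whence $(\alpha)=\fp\fa^2$ or $2\fp\fa^2$ according as $2$ ramifies or splits; without this, $F(\sqrt\alpha)\subseteq L$ is unjustified and your valuation-based independence for $\alpha$ has nothing to stand on. Second, your independence bookkeeping for $\pi$ is wrong: $v_\fp(\pi)=f$, the order of $\fp$ in $\Cl(F)$, and $f$ is even precisely in the critical case $2\in N(F)\setminus N(E_S)$ (if $2$ ramifies there, $f=2$ and $\pi=\pm2$ lies in $\langle -1,2\rangle$, so $\pi$ contributes nothing; if $2$ splits, $f$ must be even or else $2\in N(E_S)$). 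Thus $\pi$ does not, in general, have odd $\fp$-valuation, and counting it as an extra dimension would wrongly make $\alpha$ superfluous. The correct bookkeeping is: when $2\in N(E_S)$, $f$ is odd (or $\fp$ is principal) and $\pi$ supplies the extra dimension; when $2\in N(F)\setminus N(E_S)$, it is $\alpha$ that must supply it — which is the crux you explicitly defer. (That last step can in fact be done cheaply: if $\alpha\in\BQ^\times F^{\times2}$ then $N(\alpha)=2g^2$ would be a rational square; but you still need the factorization of $(\alpha)$ for membership.) So the proposal needs both the coprimality argument for $\alpha$ and a corrected case analysis of when $\pi$ is redundant before it constitutes a proof.
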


\begin{proof}
We include the proof, which is routine, for lack of exact references.
	Let $J'$ be the subgroup of $F^\times/(F^\times)^2$ generated by $B'$. It suffices to show
	 $J'=J$ with $J$ defined in \eqref{eq:J}.
	
	 We note that for all $x\in B'$, $x\neq \alpha$, $F(\sqrt{x})/F$ is unramified outside $S$. Thus if one can show that $F(\sqrt{\alpha})/F$ is unramified outside $S$, then $J'\subseteq J$.
	
		\medskip
		
	Suppose first that either $2\in N(E_S)$ or $2\notin N(F)$. In this case $\alpha=1$ and hence $J'\subset J$. We shall use the exact sequence \eqref{eq:2.7} to show that  $J'$ is indeed equal to $J$. Since $F$ is imaginary, $F^+=F^\times$. \eqref{eq:2.7} becomes the following exact sequence:
	\begin{equation}\label{eq:2.12}
	1 \rightarrow E_{S}/E^2_{S} \rightarrow J \xrightarrow[]{g} \Cl_{S}[2]\rightarrow 1.
	\end{equation}
	Here we recall that the map $g$ sends $\beta $ to $\cl_S(\fb)$, for $\beta \in J$ satisfying $\beta \CO_S=\fb^2$ for some $\CO_S$-fractional ideal $\fb$. Clearly $E_S/E^2_S \subset J'$, as $E_S$ is generated by $-1,2$ and $\pi$. Thus, in order to prove $ J'=J$, it suffices to show that $g(J')=\Cl_S[2]$.
	Let $G=\Gal(F/\BQ)$. Then $\Cl^G_S=\Cl_S[2]$. Let $I_S$ be the subgroup of fractional ideals of $F$ which is generated by prime ideals not in $S$. There is an isomorphism  (see \cite[Section 4]{LY20})
	\begin{equation}\label{eq:genus1}
	\Coker \left(I^G_S \rightarrow \Cl^G_S\right) \cong  \left(\BZ\left[\frac{1}{2}\right]\right)^\times\cap N(F^\times)/N(E_S).
	\end{equation}
	Since $-1\notin N(F)$ as $F$ is imaginary, the assumption that either $2\in N(E_S)$ or $2\notin N(F^\times)$ precisely implies that the group on the right hand of \eqref{eq:genus1} is trivial. Thus $\Cl^G_S$ is generated by $I^G_S$. But $I^G_S$ is generated by the ramified primes (see \cite[Lemma~4.4]{LY20}), it follows that $\Cl^G_S=\langle \fq_1,\cdots, \fq_t\rangle$. Since $g(q_i)=\cl_S(\fq_i)$ for each $i$, this proves $ g(J' )=\Cl^G_S=\Cl_S[2]$. Therefore, we have $J'= J$ when either $2\in N(E_S)$ or $2\notin N(F)$.
	
	\medskip
	
	Suppose next that $2\in N(F)$ but $2\notin N(E_S)$. By Lemma~\ref{lem:hasse}, $q_i\equiv \pm 1\pmod 8$ for $1\leq i\leq t$. Hence we can write $m=2g^2-h^2$ for some $g,h\in \BZ_{>0}$. In this case,  $\alpha=h+\sqrt{-m}$ (see \eqref{eq:alpha}). Then $\alpha+\bar{\alpha}=2h$ and $\alpha\bar{\alpha}=2g^2$ where $\bar{\alpha}$ is the complex conjugate of $\alpha$. Clearly $\gcd(g,h)=1$. It follows that $\gcd((\alpha), (\bar{\alpha}) )\mid 2\CO_F$.
	
	(1) If $m\equiv 1\pmod 8$ or $2\mid m$; then $2\CO_F=\fp^2$ is ramified in $F$ and $g$ is odd. In this case, $\fp\mid (\alpha)$ but $2\nmid (\alpha)$, otherwise $4\mid \alpha\bar{\alpha}=2g^2$. Hence $\bar{\fp}=\fp\mid (\bar{\alpha})$ and
	 $\gcd((\alpha), (\bar{\alpha}) )=\fp$. Since the integral ideals $(\alpha) \fp^{-1}$ and $(\bar{\alpha}) \fp^{-1}$ are coprime to each other and their product is a square, hence
   there exists an $\CO_F$-integral ideal $\fa$ such that
	$(\alpha)=\fp\fa^2$.

	(2) If $m\equiv 7\pmod 8$, then $2\CO_F=\fp\bar{\fp}$ splits and $g$ is even. Without loss of generality, we may assume $\fp\mid \alpha$. Then $\bar{\fp}\mid \bar{\alpha}$ and hence $\bar{\fp}\mid \alpha=2h-\bar{\alpha}$. This means $2\mid \alpha$ and $\gcd((\alpha), (\bar{\alpha}) )= 2\CO_F$. Now $\frac{\alpha}{2}\cdot \frac{\bar{\alpha}}{2}= 2\cdot (\frac{g}{2})^2$, then one and only one of $\fp$ and $\bar{\fp}$ divides $ \frac{\alpha}{2}$. Assume $\fp\mid \frac{\alpha}{2}$. Then the two integral ideals $(\alpha/2) \fp^{-1}$  and $(\bar{\alpha}/2) \bar{\fp}^{-1}$ are coprime and their product is a square,
	hence there exists an $\CO_F$-integral ideal $\fa$ such that $(\alpha)=2\fp\fa^2$.
	
	Thus, in both cases, we have
	\begin{equation}\label{eq:alpha3}
	\alpha \CO_S = \fa^2 \CO_S.
	\end{equation}
	This shows that $F(\sqrt{\alpha})/F$ is unramified outside $S$. Hence $J'\subset J$. Following the same argument in the previous case and applying \eqref{eq:2.12}, to show $J'=J$, we just need to show $g(J')=\Cl_S[2]=\Cl_S^G$.
	 If we can prove  $\Cl^G_S=\langle \cl_S(I^G_S), \cl_S(\fa)\rangle$, by the fact $\cl_S(I_S^G)\subset g(J')$ and $\cl_S(\fa)=g(\alpha)\in g(J')$, then we are done.

	We are left to prove the claim $\Cl^G_S=\langle \cl_S(I^G_S), \cl_S(\fa)\rangle$. By the isomorphism \eqref{eq:genus1} and by our assumption $2\in N(F)\setminus N(E_S)$, we have $[\Cl^G_S: \cl_S(I^G_S)]=2$. Thus we just need to show  $\cl_S(\fa)\notin \cl_S( I^G_S )$. Suppose, on the contrary, $\cl_S(\fa)\in \cl_S(I^G_S)$. Then we would have $\cl(\fa) \in \langle \cl(I^G_S),  \cl(\fp), \cl(\bar{\fp})\rangle$, since by definition $\Cl_S=\Cl_F/\langle \cl(S)\rangle$. Also note that $\cl(\bar{\fp})=\cl(\fp)^{-1}$. So we can write $\cl(\fa) = \cl(\fp)^{r_0} \prod_{i}\cl(\fq_i)^{r_i}$ for some integers $r_i\in \BZ$. Then, $\cl(\fa)^2 = \cl(\fp)^{2r_0}$. But, we have shown that $\cl(\fa)^2=\cl(\fp)^{-1}$. Hence $\fp^{2r_0+1}$ would be principal, say $\fp^{2r_0+1}=(\gamma)$. This would imply that $2=N(\gamma/2^{r_0})\in N(E_S)$ which contradicts to our assumption $2\in N(F^\times)\setminus N(E_S)$. This proves the claim. \end{proof}

\begin{lem}\label{lem:Cl2}
	If $m\equiv 3\pmod{4}$, then $\{\cl(\fq_1),\cdots, \cl(\fq_{t-1})\}$ is a basis of the $\BF_2$-vector space $\Cl(F)[2]$. If $m\equiv 1\pmod{4}$, then $\{\cl(\fp), \cl(\fq_1),\cdots, \cl(\fq_{t-1})\}$ is a basis of $\Cl(F)[2]$. If $m\equiv 2\pmod{4}$, then $\{\cl(\fq_1),\cdots, \cl(\fq_{t})\}$ is a basis of $\Cl(F)[2]$.
\end{lem}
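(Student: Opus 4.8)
The plan is to use the classical genus theory description of $\Cl(F)[2]$ for an imaginary quadratic field together with the ambiguous class number formula, and then identify an explicit basis among the ramified primes. Recall that for $F=\BQ(\sqrt{-m})$ the $2$-rank of $\Cl(F)$ is $t-1$, where $t$ is the number of ramified primes (equivalently, the number of prime divisors of the discriminant, since $\BQ$ has class number $1$ and the base field is imaginary). The group $I_S^G$ of ambiguous $S$-ideals is generated by the ramified primes $\fq_1,\dots,\fq_t$ (and $\fp$ when $2$ ramifies), and by \cite[Lemma~4.4]{LY20} these generate $\Cl_S^G=\Cl_S[2]$ in the cases at hand (since $-1\notin N(F)$ kills the obstruction group in \eqref{eq:genus1}). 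So the real content is: (i) pass from $S$-class group back to the class group, and (ii) count the single relation among the chosen generators and show the remaining $t-1$ (resp. $t$) are independent.

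First I would dispose of the $S$-versus-ordinary distinction. When $2$ is inert or ramified, $\#S=1$; when $m\equiv 7\bmod 8$, $2$ splits and $\#S=2$ but $\cl(\bar\fp)=\cl(\fp)^{-1}$ so $\Cl_S=\Cl/\langle\cl(\fp)\rangle$ in either split or ramified case, and for $m\equiv 3,5\bmod 8$ (inert case) $\Cl_S=\Cl$. In the case $m\equiv 3\bmod 4$: the discriminant is $-4m$ with $t+1$ prime divisors $\{2,q_1,\dots,q_t\}$ when $m\equiv 1\bmod 4$... wait, here $m\equiv 3\bmod 4$ gives discriminant $-4m$, prime divisors $2,q_1,\dots,q_t$, so genus theory gives $\rk_2\Cl(F)=t$; but the statement claims rank $t-1$ with basis $\cl(\fq_1),\dots,\cl(\fq_{t-1})$. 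The resolution is that the prime $\fp$ above $2$ is \emph{not} independent: in the $m\equiv 3\bmod 4$ subcase one checks $m\equiv 7\bmod 8$ means $2$ splits (so $\fp$ has a different behavior) while $m\equiv 3\bmod 8$ means... I would instead simply invoke genus theory directly: the classes of the ramified primes generate $\Cl(F)[2]$, they satisfy exactly the relations coming from principal ambiguous ideals, and $\dim\Cl(F)[2]=(\text{number of ramified primes})-1$. So the plan is: list all ramified primes, note they generate $\Cl(F)[2]$, compute $\dim\Cl(F)[2]$ from the $2$-rank formula \eqref{eq:2-rankT} or directly from the conductor-discriminant count, and conclude that dropping exactly one generator yields a basis; then check that the \emph{specific} generator dropped ($\fq_t$, or $\fp$ when $m\equiv 1\bmod 4$) is indeed expressible in terms of the others, i.e.\ is not needed.

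Concretely: in the case $m\equiv 2\bmod 4$, the ramified primes are exactly $\fq_1,\dots,\fq_t$ together with $\fp$ (as $2\mid m$), giving $t+1$ ramified primes, so $\dim\Cl(F)[2]=t$; one must then show $\cl(\fp)\in\langle\cl(\fq_1),\dots,\cl(\fq_t)\rangle$, which follows since the product of all ramified primes is the principal ideal $(\sqrt{-m})$ up to the subtlety at $2$, giving one relation, and after passing to $\Cl_S$ (killing $\cl(\fp)$) one gets the clean basis $\{\cl(\fq_1),\dots,\cl(\fq_t)\}$ of $\Cl_S[2]$ --- but the statement is about $\Cl(F)[2]$ not $\Cl_S$, so I must be careful. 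In the case $m\equiv 3\bmod 4$, ramified primes are $\fp$ (above $2$, since $\disc=-4m$) and $\fq_1,\dots,\fq_t$, so $t+1$ of them, $\dim\Cl(F)[2]=t$ --- again this does not match $t-1$ unless... here I realize I should reexamine: perhaps the intended convention makes $\dim\Cl(F)[2]=t-1$ only in a restricted range, or the ideal $\fp^2=(2)$ relation plus the product relation cuts it down; the cleanest route is to use that $(\sqrt{-m})=\fp^{?}\prod\fq_i$ (with the exponent on $\fp$ being $0$, $1$, or $2$ according to $m\bmod 8$) is principal, yielding relations, and combine with the genus count.

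\medskip

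The main obstacle I anticipate is the bookkeeping across the three congruence classes of $m$ modulo $4$ (further refined mod $8$) for the prime $2$: whether $2$ splits, is inert, or ramifies changes both $\#S$, whether $\fp$ is a ramified prime, and the exponent of $\fp$ in the principal ideal $(\sqrt{-m})$ --- and hence exactly which relations hold among the $\cl(\fq_i)$ and $\cl(\fp)$. The proof will proceed by: (1) quoting genus theory to get that the ramified prime classes span $\Cl(F)[2]$ and $\dim_{\BF_2}\Cl(F)[2]$ equals the number of ramified primes minus $1$; (2) writing down the single defining relation $\prod(\text{ramified primes}) \sim 1$ explicitly in each case from $(\sqrt{-m})$ or $(\sqrt{-m/?})$; (3) checking in each case that the claimed set is obtained by deleting one redundant generator, using the explicit relation to verify redundancy and a dimension count to verify independence of what remains; and (4) in the $m\equiv 1\bmod 4$ case additionally arguing that $\fp$ (inert or split, unramified!) must be included because $\disc=-m$ has only $t$ prime divisors so genus theory alone gives rank $t-1$ from the $\fq_i$ with one relation $\prod\fq_i\sim 1$, leaving only $t-1$ independent classes --- and then $\cl(\fp)$ genuinely enlarges nothing, so actually the statement must be that $\{\cl(\fp),\cl(\fq_1),\dots,\cl(\fq_{t-1})\}$ is a basis because $\cl(\fq_t)$ is the redundant one, replaced by $\cl(\fp)$; verifying that $\cl(\fp)$ together with $\cl(\fq_1),\dots,\cl(\fq_{t-1})$ is still independent (equivalently $\cl(\fp)\notin\langle\cl(\fq_1),\dots,\cl(\fq_{t-1})\rangle=\Cl(F)[2]$ would be a contradiction only if $\cl(\fp)\in\Cl(F)[2]$, which it is) --- so the real check is that $\cl(\fp)\notin\langle\cl(\fq_i):i<t\rangle$, which should follow from a norm/Hilbert-symbol argument exactly as in the last paragraph of the proof of Proposition~\ref{prop:selmer}. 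That Hilbert-symbol argument showing $\fp$ is independent of the $\fq_i$ is the technical heart, and I would model it closely on the contradiction argument already carried out there.
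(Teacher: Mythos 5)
Your overall strategy---genus theory for generation and the $2$-rank, the explicit principal ideal $(\sqrt{-m})$ to locate the one relation, and a dimension count---is the paper's route, but your execution contains a genuine error: you have the ramification of $2$ backwards, and this derails the $m\equiv 1\bmod 4$ case (and muddles $m\equiv 3\bmod 4$). For $m\equiv 3\bmod 4$ one has $-m\equiv 1\bmod 4$, the discriminant is $-m$ (not $-4m$), $2$ is unramified, the ramified primes are exactly $\fq_1,\dots,\fq_t$, and $\dim\Cl(F)[2]=t-1$; for $m\equiv 1\bmod 4$ the discriminant is $-4m$, so $2$ \emph{does} ramify, $\fp$ is a ramified prime with $\fp^2=(2)$ (which is the only reason $\cl(\fp)$ is $2$-torsion at all), and $\dim\Cl(F)[2]=t$. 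With the correct counts the lemma is immediate: in each case the ramified prime classes span $\Cl(F)[2]$, the relation space is one-dimensional (the number of generators exceeds the dimension by one), and the relation coming from $(\sqrt{-m})$, namely $\prod_i\cl(\fq_i)=1$ when $2\nmid m$ and $\cl(\fp)\prod_i\cl(\fq_i)=1$ when $m\equiv 2\bmod 4$, spans it; deleting one generator occurring in that relation leaves an independent spanning set of the right cardinality.

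Because you instead assigned discriminant $-4m$ to $m\equiv 3\bmod 4$ and declared $\fp$ ``inert or split, unramified'' with $\dim\Cl(F)[2]=t-1$ when $m\equiv 1\bmod 4$, you concluded that the ``technical heart'' is a norm/Hilbert-symbol argument, modeled on the end of the proof of Proposition~\ref{prop:selmer}, showing $\cl(\fp)\notin\langle\cl(\fq_1),\dots,\cl(\fq_{t-1})\rangle$. Under your own premise this step is not merely unnecessary but impossible: if the rank were $t-1$ and $\cl(\fq_1),\dots,\cl(\fq_{t-1})$ already spanned $\Cl(F)[2]$, then any $2$-torsion class, in particular $\cl(\fp)$, would lie in their span and your proposed set of $t$ elements could not be a basis; moreover, if $2$ were really unramified you would have no justification for the assertion that $\cl(\fp)\in\Cl(F)[2]$ at all. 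In the correct setting no symbol computation is needed anywhere in this lemma: the unique relation $\prod_i\cl(\fq_i)=1$ does not involve $\cl(\fp)$ and does involve $\cl(\fq_t)$, so $\{\cl(\fp),\cl(\fq_1),\dots,\cl(\fq_{t-1})\}$ is automatically independent when $m\equiv 1\bmod 4$, and likewise $\{\cl(\fq_1),\dots,\cl(\fq_t)\}$ is independent when $m\equiv 2\bmod 4$ because the unique relation there involves $\cl(\fp)$. As written, your plan cannot be completed without first correcting the discriminant/ramification bookkeeping and discarding the Hilbert-symbol step.
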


\begin{proof}
The proof is the classical genus theory and we refer to \cite[Theorem 6.1]{Cox13} for the details.
 \end{proof}

\begin{prop}\label{prop:CT2}
	Let $\hat{A}$ be the image of $A$ in $\CA_F$. Then $\CT_2(F)[2]=\hat{A}$.
\end{prop}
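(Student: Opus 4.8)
The plan is to prove that the image $\hat A$ of the idèle set $A$ generates $\CT_2(F)[2]$ by first verifying that each $a_i$ actually lies in $\CT_2(F)[2]\subset \CA_F$, then counting dimensions. Recall from \eqref{eq:2.2} that $\CT_2(F)=\CA_F[2^\infty]$, and $\CA_F=\BA_F^\times/\overline{F^\times\prod_{v\nmid 2}U_v}$; an idèle $a$ represents an element of $\CA_F[2]$ precisely when $a^2\in F^\times\prod_{v\nmid 2}U_v$ (up to closure), i.e.\ when $a^2$ is, modulo a global element, a unit at every finite place away from $2$ and arbitrary at the places above $2$. So first I would check, place by place, that each $a_i^2$ has this form: for $a_0$, $a_0^2=-1$ (at $\fp$) which is a global element times units away from $2$, so $a_0$ has order dividing $2$ in $\CA_F$; for $1\le i\le k$, $a_i^2$ equals $-m$ at $\fq_i$ and $q_i^*$ at the places above $2$, and $-m = q_i^*\cdot(\text{product of other }q_j^*)$ in $F^\times$, so dividing by the global element $q_i^*$ leaves something supported at $\fq_i$ and at $2$ (and $\fq_i$ is ramified so $q_i^* = -q_i$ is a uniformizer-squared times a unit there — here I must use that $q_i^*$ is a square in $\BQ_2$, as noted before the theorem, to control the component at $2$); similarly for $k<i<t$ using that $q_i^*q_t^*$ is a square in $\BQ_2$. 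This shows $\hat A\subseteq \CT_2(F)[2]$.

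Next I would establish that the elements of $\hat A$ are $\BF_2$-linearly independent in $\CT_2(F)[2]$, and that $\#\hat A$ (with the convention $a_t=1$ when $k<t$, so $\hat A$ has $t$ nonzero elements when $k=t$ and $t$ elements $a_0,\dots,a_{t-1}$ when $k<t$ — in either case $\rk_2\CT_2(F)$ many by \eqref{eq:2-rankT}) matches $\rk_2\CT_2(F)$. By \eqref{eq:2-rank}/\eqref{eq:2-rankT}, $\rk_2\CT_2(F)=t$ if every $q_i\equiv\pm1\bmod 8$ (i.e.\ $k=t$) and $t-1$ otherwise. When $k=t$, $\hat A=\{\hat a_0,\hat a_1,\dots,\hat a_t\}$ but one of these must be redundant; in fact the product $a_1\cdots a_t$ corresponds (after dividing by the global element $\prod q_i^*$, which is $\pm m$ up to a global square) to an idèle supported only above $2$, and one checks it becomes trivial (or equal to $\hat a_0$) in $\CA_F$, giving the single relation so that exactly $t$ of the $t+1$ classes are independent. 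When $k<t$, $a_t=1$ and the $t$ classes $\hat a_0,\dots,\hat a_{t-1}$ should be independent.

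For the independence itself, the cleanest route is to use the Artin pairing with $L=F(\sqrt{B'})$, $B'=B\cup\{2\}$, already identified in Proposition~\ref{prop:selmer}: the map $\CT_2(F)[2]\hookrightarrow \CA_F\to \Gal(L/F)\cong \prod_{b\in B'}\Gal(F(\sqrt b)/F)$ is injective (its kernel is $2\CA_F\cap \CT_2(F)[2]$, but we only need the piece detecting independence — more precisely I would exhibit, for each basis candidate, an element $b\in B'$ with $[a_i,b]\ne 0$ forming a triangular pattern). Here the explicit local description of the $a_i$ makes $[a_i,b]=\sum_v [a_{i,v},b]_v$ a finite sum of local Hilbert symbols which I can compute: e.g.\ $[a_i,q_i]$ is governed by the Hilbert symbol $(-m,q_i)_{\fq_i}$ together with $(q_i^*,q_i)$ at $2$, and genus-theoretic input (Lemma~\ref{lem:Cl2}, which says the $\cl(\fq_i)$ and $\cl(\fp)$ form a basis of $\Cl(F)[2]$) guarantees these do not all vanish simultaneously, yielding a matrix of maximal rank $\rk_2\CT_2(F)$. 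Combining $\hat A\subseteq\CT_2(F)[2]$, $\dim_{\BF_2}\langle\hat A\rangle\ge \rk_2\CT_2(F)=\dim_{\BF_2}\CT_2(F)[2]$ forces $\langle\hat A\rangle=\CT_2(F)[2]$.

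The main obstacle I anticipate is the careful local bookkeeping at the primes above $2$: one must verify that the chosen square roots $\sqrt{q_i^*}$, $\sqrt{q_i^*q_t^*}$, and $\sqrt{-1}$ genuinely live in $F_v$ for $v\mid 2$ (this is exactly why the theorem lists the cases $F_\fp\cong\BQ_2(\sqrt{-1})$ versus $2$ split, and why the ordering puts the $\pm1\bmod 8$ primes first), and that after dividing $a_i^2$ by the global element the residual idèle is a genuine $2$-unit-idèle — i.e.\ that no unexpected ramification at $2$ sneaks in. Handling the $m\equiv 7\bmod 8$ and $m\equiv 1,2\bmod 8$ cases separately, as in the proof of Proposition~\ref{prop:selmer}, and tracking the role of $\alpha$ and $\pi$ in $B'$ (which enter the independence argument but not the membership argument) is where the bulk of the genuine work lies; the dimension count and the final surjectivity are then formal.
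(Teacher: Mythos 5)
Your first step (each $\hat{a}_i$ lies in $\CT_2(F)[2]$) is fine and matches the paper. The genuine gap is in your independence argument. The map $\varphi\colon \CT_2(F)[2]\hookrightarrow\CA_F\to\CA_F/2\CA_F\cong\Gal(L/F)$ is \emph{not} injective: by \eqref{eq:p2rank} its kernel is $2\CT_2(F)[4]$, of dimension $\rk_4(\CT_2(F))$, and indeed the whole content of Theorem~\ref{thm:4-rankT} is that the matrix $([a,b])_{a\in A,\,b\in B}$ has rank $\rk_2(\CT_2(F))-\rk_4(\CT_2(F))$. So the "triangular pattern" of nonvanishing Artin symbols of size $\rk_2(\CT_2(F))$ that you want to exhibit cannot exist whenever $\rk_4(\CT_2(F))>0$, which happens for a positive-density family of $m$ (e.g.\ $F=\BQ(\sqrt{-l})$ with $l\equiv 1\bmod 16$, where $\CT_2(F)$ is cyclic of order at least $8$ by Theorem~\ref{thm:special-family1}, hence has $4$-rank $1$). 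If your pairing argument worked it would prove $\rk_4(\CT_2(F))=0$ for every imaginary quadratic field, contradicting the paper's main density theorem. There is also a counting slip: when $k<t$ you have $t$ candidate classes $\hat{a}_0,\dots,\hat{a}_{t-1}$ but $\rk_2(\CT_2(F))=t-1$ by \eqref{eq:2-rankT}, so these classes cannot be independent and one relation must be accounted for.

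The way to repair the dimension count (and what the paper does) is to avoid the Kummer/Artin pairing entirely, since it mixes in $4$-rank information, and instead use the class-field-theory exact sequence \eqref{eq:2.10}: $0\to \bigl(U_{1,S}/\overline{\phi(E_{1,S})}\bigr)[2]\to \CT_2(F)[2]\xrightarrow{\;f\;}\Cl(F)[2]$. Here $E_{1,S}=\{\pm1\}$, so the kernel of $f$ has order $2$, generated by $\hat{a}_0$, exactly when $F_\fp=\BQ_2$ or $\BQ_2(\sqrt{-1})$, and is trivial otherwise; and $f(\hat{a}_i)=\cl(\fq_i)$ for $1\leq i\leq k$, $f(\hat{a}_j)=\cl(\fq_j)\cl(\fq_t)$ for $k<j<t$. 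One then computes $\dim\hat{A}=\dim f(\hat{A})+\dim\ker(f|_{\hat{A}})$ case by case ($m\equiv 2\bmod 4$, $m\equiv\pm1\bmod 8$, $m\equiv\pm3\bmod 8$) using the explicit basis of $\Cl(F)[2]$ from Lemma~\ref{lem:Cl2}, and checks in each case that the result equals $\rk_2(\CT_2(F))$ from \eqref{eq:2-rankT}; since $\hat{A}\subseteq\CT_2(F)[2]$, equality of dimensions gives the proposition. Your proposal never uses the projection to $\Cl(F)[2]$, and without it (or some substitute that sees only $2$-rank data) the lower bound $\dim\langle\hat{A}\rangle\geq\rk_2(\CT_2(F))$ is not established.
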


\begin{proof}
	 For each $i$, $a^2_i$ is clearly in $\overline{F^\times \prod_{v\notin S} U_v}$, hence $\hat{a}_i$, the image of $a_i$ in $\CA_F$, is in $\CA_F[2]= \CT_2(F)[2]$, and $\hat{A}\subseteq \CT_2(F)[2]$. We have the following exact sequence of $\BF_2$-vector spaces induced from \eqref{eq:2.3}:
	\begin{equation}\label{eq:2.10}
		0 \longrightarrow  U_{1,S}/{\overline{\phi(E_{1,S})}}[2] \longrightarrow \CT_2(F)[2]\stackrel{f}{\longrightarrow}  \Cl(F)[2].
	\end{equation}
	Since $E_{1,S}=\{ \pm 1 \}$, the first term of \eqref{eq:2.10} has order $2$ and is generated by $\hat{a}_0$ if $F_\fp=\BQ_2$ or $\BQ_2(\sqrt{-1})$, and is trivial otherwise. Thus $ \dim \Ker(f)= \dim \Ker(f|_{\hat{A}})=1$ if $F_\fp=\BQ_2$ or $\BQ_2(\sqrt{-1})$, and $0$ if otherwise. By definition, $f(\hat{a}_i)=\cl(\fq_i)$ if $1\leq i \leq k$ and $f(\hat{a}_j)=\cl(\fq_j)\cl(\fq_t)$ if $k< j< t$.
	
	\medskip
	
	Suppose first that $m\equiv 2\pmod{4}$. In this case  $F_\fp$ can not be $\BQ_2$ or $\BQ_2(\sqrt{-1})$, so  $\Ker(f)=0$ and  $\dim (\hat{A})=\dim f(\hat{A})$.
	If $t=k$, then  $\dim f(\hat{A})=t$ by Lemma~\ref{lem:Cl2}.  Then $\CT_2(F)[2]=\hat{A}$ by the $2$-rank formula \eqref{eq:2-rankT} for $\CT_2(F)$. If $t>k$, one can	write
	\[ (f(\hat{a}_1), \cdots, f(\hat{a}_k), f(\hat{a}_{k+1}\hat{a}_t ),\cdots, f(\hat{a}_{t-1}\hat{a}_t ))= (\cl(\fq_1), \cdots, \cl(\fq_{t})) M, \]
	where $M$ is a matrix of rank $t-1$. Note that $\{\cl(\fq_1), \cdots, \cl(\fq_{t} )\}$ is an $\BF_2$-basis of  $\Cl(F)[2]$ by Lemma~\ref{lem:Cl2}, then	 $\dim\hat{A}=\dim f(\hat{A})=\rank(M)=t-1$. However, $\dim \CT_2(F)[2]=t-1$ by \eqref{eq:2-rankT} if $t>k$, hence  $\CT_2(F)[2]=\hat{A}$.
	
	\medskip
	
	Suppose next that $m\equiv \pm 1\pmod 8$. Then $t-k$ is even and $F_\fp=\BQ_2$ or $\BQ_2(\sqrt{-1})$. If $t=k$, it follows from Lemma~\ref{lem:Cl2} that $\dim f(\hat{A})=t-1$ and hence $\dim \hat{A}=t$ which coincides with $\dim \CT_2(F)[2]$ by \eqref{eq:2-rankT}. If $t-k$ is positive and even,  this time we can write
		\[ (f(\hat{a}_1), \cdots, f(\hat{a}_k), f(\hat{a}_{k+1}\hat{a}_t ),\cdots, f(\hat{a}_{t-1}\hat{a}_t ))= (\cl(\fq_1), \cdots, \cl(\fq_{t-1})) M, \]
		where $M$ is a matrix of rank $t-2$. Note that $\{\cl(\fq_1), \cdots, \cl(\fq_{t-1} )\}$ is linearly independent by Lemma~\ref{lem:Cl2}, then	 $\dim\hat{A}=\dim f(\hat{A})+1=\rank(M)+1=t-1$, which coincides with $\dim \CT_2(F)[2]$ by the $2$-rank formula \eqref{eq:2-rankT}. This proves $\CT_2(F)[2]=\hat{A}$ when  $m\equiv \pm 1\pmod 8$.
	
	\medskip
	
	Finally, suppose that $m\equiv \pm 3\pmod 8$. It follows that $t-k$ is an odd integer and the local field $F_\fp$ can not be $\BQ_2$ or $\BQ_2(\sqrt{-1})$. Then
			\[ (f(\hat{a}_1), \cdots, f(\hat{a}_k), f(\hat{a}_{k+1}\hat{a}_t ),\cdots, f(\hat{a}_{t-1}\hat{a}_t ))= (\cl(\fq_1), \cdots, \cl(\fq_{t-1})) M, \]
	where $M$ is a matrix of rank $t-1$.  Thus $\dim \hat{A}=\dim f(\hat{A})=t-1$,  which coincides with $\dim \CT_2(F)[2]$ by the $2$-rank formula \eqref{eq:2-rankT}. This proves $\CT_2(F)[2]=\hat{A}$ when  $m\equiv \pm 3\pmod 8$. \end{proof}

\begin{prop}\label{prop:a2=0}
$[a,2]=0$ for all $a\in A$.	
\end{prop}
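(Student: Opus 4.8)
The plan is to obtain this as an instance of Remark~\ref{rmk:2.3}(1): it suffices to show that $F(\sqrt{2})$ is contained in a $\BZ_2$-extension of $F$. Recall that the cyclotomic $\BZ_2$-extension of $\BQ$ is $\BQ_\infty=\BQ(\mu_{2^\infty})^{+}$, the maximal totally real subfield of $\BQ(\mu_{2^\infty})$; it is unramified outside $2$, and its first layer is $\BQ(\zeta_8)^{+}=\BQ(\sqrt{2})$, so $\BQ(\sqrt{2})\subseteq\BQ_\infty$. I would then set $F_\infty:=F\BQ_\infty$ and observe that, since the only quadratic subfield of $\BQ_\infty$ is the totally real field $\BQ(\sqrt{2})$ whereas $F=\BQ(\sqrt{-m})$ is imaginary, we have $F\cap\BQ_\infty=\BQ$; hence $\Gal(F_\infty/F)\cong\Gal(\BQ_\infty/\BQ)\cong\BZ_2$, i.e.\ $F_\infty/F$ is a $\BZ_2$-extension, and it is unramified outside $2$, so $F_\infty\subseteq M$. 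Since $F(\sqrt{2})=F\BQ(\sqrt{2})\subseteq F\BQ_\infty=F_\infty$, the field $F(\sqrt{2})$ indeed lies inside a $\BZ_2$-extension of $F$.

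Granting this, the conclusion follows at once. The surjection $\CA_F\cong\Gal(M/F)\twoheadrightarrow\Gal(F_\infty/F)\cong\BZ_2$ has torsion-free target, hence annihilates the torsion subgroup $\CT_2(F)$ of $\Gal(M/F)$; in particular, for every $a\in A$ --- whose image $\widehat{a}$ lies in $\CT_2(F)[2]$, as noted at the start of the proof of Proposition~\ref{prop:CT2} --- the automorphism $\widehat{a}$ acts trivially on $F_\infty$, a fortiori on $F(\sqrt{2})$. Therefore $[a,2]=\log_{-1}\!\bigl(a,F(\sqrt{2})/F\bigr)=0$ for all $a\in A$, which is the assertion; this is precisely the mechanism recorded in Remark~\ref{rmk:2.3}(1), applied with $b=2$.

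I do not expect any genuine obstacle here: the whole content is the elementary fact that $\sqrt{2}$ lies in the cyclotomic $\BZ_2$-extension of $\BQ$. Should one prefer a purely local verification, one would instead expand $[a,2]=\sum_v[a_v,2]_v$ into local Hilbert symbols: the odd ramified places contribute $\sum_i\lleg{2}{q_i}$, which vanishes because $q_i\equiv\pm1\bmod 8$ for $i\leq k$ while for $k<i<t$ the primes $q_i,q_t\equiv\pm3\bmod8$ occur in cancelling pairs; the components of $a_i$ ($1\leq i\leq t$) at the places above $2$ lie in $\BQ_2^\times$ because $q_i^*$ and $q_i^*q_t^*$ are squares in $\BQ_2$, and for any $b\in\BQ_2^\times$ one has $\sum_{v\mid 2}[b,2]_{F_v}=2\,[b,2]_{\BQ_2}=0$, since restriction from $\BQ_2$ to $F_v$ multiplies the Brauer invariant by $[F_v:\BQ_2]$ and $\sum_{v\mid 2}[F_v:\BQ_2]=[F:\BQ]=2$; finally for $a_0$ one uses $[-1,2]_{\BQ_2}=0$ and the projection formula $[\sqrt{-1},2]_{\BQ_2(\sqrt{-1})}=[N_{\BQ_2(\sqrt{-1})/\BQ_2}(\sqrt{-1}),2]_{\BQ_2}=[1,2]_{\BQ_2}=0$. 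The only mildly delicate point on this alternative route is this bookkeeping of the symbols above $2$, which the first argument avoids entirely.
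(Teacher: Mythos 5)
Your main argument is exactly the paper's proof: $F(\sqrt{2})$ lies in the cyclotomic $\BZ_2$-extension of $F$ (unramified outside $2$, with torsion-free Galois group), so Remark~\ref{rmk:2.3}(1) forces $[a,2]=0$ for every $a\in A$, and your elaboration of why $F\cap\BQ_\infty=\BQ$ and why torsion dies in $\BZ_2$ is correct. The supplementary local Hilbert-symbol computation is also sound but unnecessary; the proposal matches the paper's approach.
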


\begin{proof}
Since $F(\sqrt{2})$ is the first layer of the cyclotomic $\BZ_2$-extension of $F$, the proposition then follows from Remark~\ref{rmk:2.3}(1). \end{proof}

\subsection{$4$-rank density formula}
The aim of this subsection is to prove Theorem~\ref{thm:4-rank-density}. We first give a simplification of the matrix $R$ in Theorem~\ref{thm:4-rankT} when $2$ is not a norm of $F=\BQ(\sqrt{-m})$. Although only the result in the case $m\equiv 3\pmod 4$ will be used in the proof of Theorem~\ref{thm:4-rank-density}, we also present the simplification in the case $m\equiv 1,2\pmod 4$, for completeness.

\begin{thm}\label{thm:rk4-T2(-m)}
Let $F=\BQ(\sqrt{-m})$, where $m$ is a positive squarefree integer. Let $q_1, q_2, \cdots q_t$ be all the ramified prime numbers in $F$ and assume $q_1=2$ if $2$ is ramified in $F$. Set
 \[ R^{C}:=\left(\lhilbert{q_i}{-m}{q_j}\right)_{2\leq i,j\leq t}\in M_{t-1}(\BF_2) \]
and
 \[\tau:=\begin{cases}
    \Big(\lleg{-2} {q_2},\cdots, \lleg{-2}{q_t}\Big)^T, & \mbox{if } m\equiv 3\pmod 8 \\
    \Big(\lleg{2} {q_2},\cdots, \lleg{2}{q_t}\Big)^T, & \mbox{otherwise}.
  \end{cases}\]
If $2\notin N(F)$, then
 \begin{equation}
 	\rk_4 \CT_2(F)=t-1-\rank~(\tau, R^{C}).
 \end{equation}
\end{thm}

\medskip

\begin{remark}
A word on the notation: Note that in the above theorem, $q_1, \cdots, q_t$ denote the ramified primes in $F$ rather than the odd prime factors of $m$ as used in Theorem~\ref{thm:4-rankT} and in last subsection. Clearly, this makes no difference when $m\equiv 3\pmod 4$.
\end{remark}

\begin{remark}\label{rmk:redei_cl}
Recall that (see \cite[\S 2]{LY20} for example) the classical R\'edei matrix for $\Cl_F$ is
 \[ R^{\Cl}:=\left(\lhilbert{q_i}{-m}{q_j}\right)_{1\leq i,j\leq t} \quad  \text{ and } \quad \rk_4(\Cl_F) =  t-1 - \rank~R^{\Cl}.\]
The matrix $R^C$ defined above is obtained from $R^{\Cl}$ by deleting its first row and  first column.
When $m\equiv 2,3 \pmod 4$, using the quadratic reciprocity law, one sees that the sums of each row and of each column of $R^{\Cl}$ are zero, hence $\rank~R^{C} = \rank~R^{\Cl}$. Therefore
  \[ \rk_4 \Cl_F=t-1-\rank~R^{C} \quad \text{ if } m\equiv 2,3\pmod 4. \]
\end{remark}

\begin{proof}
Firstly, we consider the case that $2$ is unramified, i.e., $m\equiv 3\pmod 4$. Then $\rk_2 (\CT_2(F))=t-1$ by Theorem~\ref{thm:2-rank}.

$(1)$ Assume first $m\equiv 3\pmod 8$. Then $2$ is inert in $F$.
Note that
\begin{equation}\label{eq:sum_tau1}
\sum\limits_{i=1}^{t}\lleg{-2}{q_i}= \lleg{-2}{m}= 0.
\end{equation}
Hence we can rearrange $\{q_1,\cdots, q_t\}$ without change the rank of $(\tau, R^{C})$. In this case, the sets $A$ and $B$ of Theorem~\ref{thm:4-rankT} are as follows:  $a_0=a_t=1$, $\alpha=1$ and $\pi=2$. But by Proposition~\ref{prop:a2=0}, $[a,2]=0$ for each $a\in A$. So we may assume that $A=\{a_1, \cdots, a_{t-1} \}$, $B=\{-1:=q_0, q_1,\cdots, q_t \}$.
Clearly, $B$ can be replaced by $\{-1:=q^*_0, q_1^*,\cdots, q_t^* \}$ as they generate the same group.

For $1\leq i\leq t$, note that $\sqrt{q^*_i} \in F_\fp = \BQ_2(\sqrt{5})$ and define
\[
a'_i:=(\cdots,  \underset{\fp}{\sqrt{q^*_i}},\cdots, \underset{\fq_i}{\sqrt{-m}}, \cdots ) \in \BA^\times_F.
\]
Then we have $a_i=a'_i $ for $1\leq i\leq k$ and $a_j=a'_j a'_t$ for $k< j \leq t-1$. Since $m\equiv 3\pmod 8$, $t-k$ must be odd. Then a direct computation shows
\[
a_1 \cdots a_{t-1} \equiv a'_{t} \left(\bmod  \left(\BA^{\times 2}_F, \overline{F^\times \prod_{v\notin S} U_v}\right) \right).
\]
It follows that we may replace $A$ by $\{a'_1, \cdots, a'_t\}$ as they generate the same group in $\CT_2(F)$. Therefore, by Theorem~\ref{thm:4-rankT}, we have
\[
 \rk_4 \CT_2(F)  = t -1  - \rank~([a'_i, q^*_j])_{1\leq i\leq t, 0\leq j \leq t}.
\]
Using the quadratic reciprocity law, for $i,j\geq 1$, one checks that
\[
 [a'_i,-1] = \lleg{-2}{q_i}   \quad \text{and}\quad  [a'_i,~q_j^*]= \lhilbert{m}{q_j^*}{q_i}=\lhilbert{q_i}{-m}{q_j}.
\]
 By the row-sum-zero and column-sum-zero property of the matrix mentioned in Remark~\ref{rmk:redei_cl} and the equation \eqref{eq:sum_tau1}, we conclude that
 \[ \rk_4 \CT_2(F) =t-1-\rank~(\tau, R^{C}).\]

(2) Assume next $m\equiv 7\pmod 8$. The prime $2$ splits in $F$. Note that $t>k$  since $2\notin N(F)$, hence the element $a_t\in A$ is trivial. We still replace $B$ by $\{-1:=q_0, \pi, q_1^*,\cdots, q_t^* \}$. Also note that both $a_0\in A$ and $\pi \in B$ are nontrivial. We may choose the sign of $\pi$ such that $\lhilbert{\pi}{-1}{\fp}=0$. Then $[a_0, \pi]=0$. The matrix $R$ for $\CT_2(F)$ in Theorem~\ref{thm:4-rankT} is
\begin{align}\label{eq:matrix7mod8}
&R=\left(\begin{array}{ccccc}
	1   & 0    & \cdots & 0  & \cdots    \\
	\lleg{-1}{q_1}   & \lhilbert{\sqrt{-m}}{\pi}{\fq_1}  & \cdots & \lhilbert{m}{q_j^*}{q_1}  & \cdots    \\
	\vdots  & \vdots   &   & \vdots &    \\
	\lleg{-1}{q_tq_{k+1}}   & \lhilbert{\sqrt{-m}}{\pi}{ \fq_{k+1}} + \lhilbert{\sqrt{-m}}{\pi}{\fq_t}   & \cdots & \lhilbert{m}{q_j^*}{q_{k+1}}+ \lhilbert{m}{q_j^*}{q_{t}}  & \cdots   \\
	\vdots  & \vdots   &   & \vdots &     \\
	\lleg{-1}{q_t q_{t-1}}   & \lhilbert{\sqrt{-m}}{\pi}{\fq_{t-1}} + \lhilbert{\sqrt{-m}}{\pi}{\fq_t}  & \cdots & \lhilbert{m}{q_j^*}{q_{t-1}}+ \lhilbert{m}{q_j^*}{q_{t}} & \cdots \end{array}\right).
\end{align}
We make the following elementary operations on the matrix $R$:
Firstly, replace the first column by $(1,0,\cdots, 0 )^T$, and replace the first row $(\cdots)$ by $\left(1, \scriptstyle\lhilbert{\scriptstyle\sqrt{-m}}{\scriptstyle\pi}{\scriptstyle\fq_t}, \cdots,  \scriptstyle\lhilbert{m}{q_j^*}{q_t}, \cdots \right)$. Secondly, add the first row to the $k+2, \cdots, t$-th row. Thirdly, move the first row to the bottom. Finally delete the first row. It follows that the matrix $R$ in \eqref{eq:matrix7mod8} is equivalent to
\begin{equation}\label{eq:matrix7mod8'}
( \tau, \beta, R^{C}  ).
\end{equation}
where \[\beta := \left( \lhilbert{\sqrt{-m}}{\pi}{\fq_i}\right)^{T}_{2\leq i \leq t}.\]
By Lemma~\ref{lem:matrix} below, $\rank(R)=\rank( \tau, R^{C})$. This proves the case $m\equiv 7\pmod 8$ by Theorem~\ref{thm:4-rankT}.

\medskip

Now we consider the case that $2$ is ramified whence $q_1=2$. Then $m=q_2\cdots q_t\equiv 1\pmod4$ or $m=2q_2\cdots q_t\equiv 2\pmod4$. Write $2\CO_F=\fp^2$. By our condition $2\notin N(F)$, the $B$ in Theorem~\ref{thm:4-rankT} is $B=\{q_0^*:=-1, q_2^*,\cdots, q_t^*\}$.

$(3)$ Suppose $m \equiv 1\pmod8$. Then $A=\{a_0, a_2,\cdots, a_{t-1} \}$. It is clear that the matrix $R$ for $\CT_2(F)$ is
  \[ R= \left(\begin{array}{cccc}
	0   & \cdots & 0  & \cdots    \\
	0  & \cdots & \lhilbert{m}{q_j^*}{q_2}  & \cdots    \\
	\vdots     &   & \vdots &    \\
	0   & \cdots & \lhilbert{m}{q_j^*}{q_{k+1}}+ \lhilbert{m}{q_j^*}{q_{t}}  & \cdots   \\
	\vdots   &   & \vdots &     \\
	0   & \cdots & \lhilbert{m}{q_j^*}{q_{t-1}}+ \lhilbert{m}{q_j^*}{q_{t}} & \cdots \end{array}\right)\]
Firstly, replace the first row $(\cdots)$ by $\left(1,  \cdots,  \scriptstyle\lhilbert{m}{q_j^*}{q_t}, \cdots \right)$. Then we get a matrix whose rank equals $1+\rank~R$. Secondly, add the first row to the $k+2, k+3,  \cdots$ and the $t$-th row. Finally move the first row to the bottom. Now we get $(\tau, R^{C})$. We have $\rk_2~\CT_2(F)=t-2$ by Theorem~\ref{thm:2-rank}.  Thus
\[  \rk_4 \CT_2(-m)= t-2 - \rank~R=t-1 - \rank~(\tau, R^{C}).\]
This proves the case  $m \equiv 1\pmod8$. The arguments for the other cases are similar and we leave the details to the reader. \end{proof}

\begin{lem}\label{lem:matrix}
	Assume that $m\equiv 7\pmod 8$ having a prime factor $q\equiv \pm 3\pmod{8}$. Then $\beta$ is a sum of column vectors of $R^C$.
\end{lem}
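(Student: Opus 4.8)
The plan is to make $\pi$ and $\beta$ completely explicit and then to exhibit the required subset of columns by a direct symbol computation. First I would pin down $\pi$. Since $m$ has a prime factor $\equiv\pm3\bmod8$, Lemma~\ref{lem:hasse} gives $2\notin N(F)$; as $2$ splits in $F=\BQ(\sqrt{-m})$ we have $N(\fp)=2$ and $F_\fp\cong\BQ_2$. Writing $f$ for the order of $\fp$ in $\Cl(F)$ and $\pi$ for a generator of $\fp^f$, we get $N_{F/\BQ}(\pi)=2^f$; moreover $2\notin N(F)$ forces $f$ to be even and $\ge2$ (indeed $f=0$ gives $2=N_{F/\BQ}(\pi)$, and odd $f$ gives $2=N_{F/\BQ}(\pi/2^{(f-1)/2})$, both impossible), so $c:=2^{(f+2)/2}$ is divisible by $4$. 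A parity check modulo $8$ on $N(\pi)=2^f$ rules out $\pi\in\BZ[\sqrt{-m}]$, so $\pi=(a+b\sqrt{-m})/2$ with $a,b$ odd and $a^2+mb^2=c^2$. Transporting the normalization $\hilbert{\pi}{-1}{\fp}=1$ made in the proof of Theorem~\ref{thm:compareTK} through $F_\fp\cong\BQ_2$ and using $f\ge2$ forces $a\equiv1\bmod4$. Finally, from $v_{\fq_i}(\sqrt{-m})=1$, $v_{\fq_i}(\pi)=0$ and $\sqrt{-m}\equiv0\bmod\fq_i$, the tame symbol formula gives $\beta_i=\lleg{\pi\bmod\fq_i}{q_i}=\lleg{2a}{q_i}$ (and $q_i\nmid a$ since $a^2\equiv c^2\bmod q_i$).

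Next I would read off the Rédei factorization carried by $\pi$. From $a^2+mb^2=c^2$ we get $|a|<c$ and $(c-a)(c+a)=mb^2$ with $c-a,\,c+a$ coprime positive odd integers; sorting the primes dividing $m$ by which of $c\pm a$ they divide produces a partition $\{1,\dots,t\}=S_1\sqcup S_2$ and a factorization $c-a=d_1s_1^2$, $c+a=d_2s_2^2$ with $d_\nu=\prod_{i\in S_\nu}q_i$, $d_1d_2=m$, $s_1s_2=|b|$. Reducing modulo $4$ (using $4\mid c$, $a\equiv1\bmod4$ and $s_\nu$ odd) yields the crucial congruences $d_1\equiv3\bmod4$ and $d_2\equiv1\bmod4$. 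Since $a\equiv c\bmod q_i$ for $i\in S_1$ and $a\equiv-c\bmod q_i$ for $i\in S_2$, this already gives $\beta_i=\lleg{2c}{q_i}$ for $i\in S_1$ and $\beta_i=\lleg{-1}{q_i}+\lleg{2c}{q_i}$ for $i\in S_2$.

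The claim is then that $\beta=\sum_{j\in S_2}(\text{$j$-th column of }R^{\mathrm{Cl}})$. The $(i,j)$ entry $\lhilbert{q_i}{-m}{q_j}$ of $R^{\mathrm{Cl}}$ equals $\lleg{q_i}{q_j}$ for $j\ne i$ and $\lleg{m/q_i}{q_i}$ for $j=i$. For $i\in S_1$ the $i$-th coordinate of $\sum_{j\in S_2}(\text{column }j)$ is $\sum_{j\in S_2}\lleg{q_i}{q_j}=\lleg{q_i}{d_2}$, which by Jacobi reciprocity equals $\lleg{d_2}{q_i}$ since $d_2\equiv1\bmod4$; and $\lleg{d_2}{q_i}=\lleg{c+a}{q_i}=\lleg{2c}{q_i}=\beta_i$, using $q_i\nmid s_2$ and $a\equiv c\bmod q_i$. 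For $i\in S_2$ the same coordinate is $\lleg{m/q_i}{q_i}+\lleg{q_i}{d_2/q_i}$; writing $m/q_i=d_1\cdot(d_2/q_i)$, applying reciprocity, and using $(d_2/q_i-1)/2\equiv\lleg{-1}{q_i}\bmod2$ (again from $d_2\equiv1\bmod4$) collapses this to $\lleg{d_1}{q_i}+\lleg{-1}{q_i}=\lleg{c-a}{q_i}+\lleg{-1}{q_i}=\lleg{2c}{q_i}+\lleg{-1}{q_i}=\beta_i$. Hence $\beta$ is the stated sum of columns (an empty sum, so $\beta=0$, exactly when $c+a$ is a perfect square).

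I expect the only real difficulty to be organizational — singling out the correct subset $S_2$ among the $2^t$ possibilities — together with the one genuinely substantive observation: that the sign normalization of $\pi$ forces $d_2\equiv1\bmod4$. Without that congruence a stray term $\lleg{-1}{q_i}$ survives in the computation above and $\beta$ leaves the column span of $R^{\mathrm{Cl}}$. This is precisely where the hypothesis that $m$ has a prime factor $\equiv\pm3\bmod8$ is used: it is exactly what makes $f$ even and $\ge2$, hence makes both the sign normalization and the divisibility $4\mid c$ available.
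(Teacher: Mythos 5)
Your proof is correct and follows essentially the same route as the paper: normalize $\pi$ so that its trace $a$ (the paper's $c$) is $\equiv 1\bmod 4$ using that $f$ is even (this is where the hypothesis of a prime factor $\equiv\pm3\bmod 8$ enters), factor $c^2-a^2=mb^2$ into coprime pieces $d_1s_1^2$ and $d_2s_2^2$ with $d_2\equiv 1\bmod 4$, and identify $\beta$ with the sum of the columns of $R^{\mathrm{Cl}}$ indexed by the primes dividing $d_2$ (the paper's $m_+$); the only difference is that you check this identity entrywise by Jacobi reciprocity, whereas the paper runs the Hilbert-symbol product formula, an equivalent piece of bookkeeping. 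One small repair: a congruence modulo $8$ does not exclude $\pi=x+y\sqrt{-m}$ with $x,y$ both odd (then $x^2+my^2\equiv 0\bmod 8$); that case is ruled out because such a $\pi$ lies in $2\CO_F$, contradicting $(\pi)=\fp^f$ with $\fp\neq\bar{\fp}$ --- or simply note that your normalization step already forces the trace of $\pi$ to be $\equiv 1\bmod 4$, hence odd, which is all the rest of your argument uses.
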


\begin{proof}
	Let $\lambda$ be the order of $\fp$ in $\Cl(F)$. Suppose $\pi = \frac{c+d\sqrt{-m}}{2}$ with $c,d\in \BZ$ such that $\pi\CO_F=\fp^\lambda$ and $\hilbert{-1}{\pi}{\fp}=1$. Note that $\lambda$ must be even; otherwise, $2= N( \pi 2^{- \frac{\lambda-1}{2}})\in N(F)$ which contradicts to the assumption.
	 Write $\lambda=2\lambda'$. Then we have a decomposition in $\BZ$
	\begin{equation}\label{eq:pi}
		(2^{\lambda'+1}-c)(2^{\lambda'+1}+c)=md^2.
	\end{equation}
Since $\hilbert{-1}{\pi}{\fp}=1$, it follows from the product formula that $\hilbert{-1}{\pi}{\bar{\fp}}=1$. We obtain $\pi \equiv 1\pmod {\bar{\fp}^2}$ and $\bar{\pi} \equiv 1\pmod {\fp^2}$. But $\fp^\lambda \mid \pi$ and $\lambda$ is even, we have $\pi \equiv 0\pmod {\fp^2}$. Thus
	\[	c= \pi+\bar{\pi}\equiv 1\pmod \fp^2\ \Longrightarrow\  c\equiv 1\pmod 4.\]
Then $2^{\lambda'+1}-c$ and $2^{\lambda'+1}+c$ are coprime, by \eqref{eq:pi}, there exist positive integers $m_{+}, m_{-}, d_{+},d_{-}$ such that $m=m_{+}m_{-}$, $d=d_{+}d_{-}$,  $2^{\lambda'+1}+c = m_{+}d^2_{+},$ and $2^{\lambda'+1}-c = m_{-}d^2_{-}.$ In particular, $m_{+}\equiv c\equiv 1\pmod 4$ and $m_{-}\equiv -1\pmod 4$. We obtain
	\[2c=m_{+}d^2_{+} - m_{-}d^2_{-}. \]
	Now the vector
	\[\beta  = \left( \lhilbert{q_i}{2c}{q_i}  \right)_{1\leq i \leq t}^T.\]
	If $q_i \mid  m_{-}$, noting that $m_{+}\equiv 1\pmod 4$, we have
	\[
	\lhilbert{q_i}{2c}{q_i} = \lhilbert{q_i}{m_{+}}{q_i} =\sum\limits_{q\mid 2m_{+}} \lhilbert{q_i}{m_{+}}{q}= \sum\limits_{q\mid m_{+}}\lhilbert{q_i}{m_{+}}{q}
	=\sum\limits_{q\mid m_{+}}\lhilbert{q_i}{-m}{q}.
	\]
	If $q_i\mid m_{+} $, noting that $-m_{-}\equiv 1\pmod 4$, we also have
	\[
	\lhilbert{q_i}{2c}{q_i}=\lhilbert{q_i}{-m_{-}}{q_i}=\sum\limits_{q\mid 2m_{-}} \lhilbert{q_i}{-m_{-}}{q}=\sum\limits_{q\mid m_{-}}\lhilbert{q_i}{-m_{-}}{q}=\sum\limits_{q\mid m_{-}}\lhilbert{q_i}{-m}{q}=\sum_{q\mid m_{+}}\lhilbert{q_i}{-m}{q}.
	\]
	This means that  $\beta$ is the sum of the column vectors $\left(\lhilbert{q_i}{-m}{q_j}\right)^T_i$ for $q_j\mid m_+$ of $R^C$.
	\end{proof}

\medskip

The rest of this subsection is dedicated to proving Theorem~\ref{thm:4-rank-density}, which is based on the work of Gerth \cite{Ger84} and Yue-Yu \cite{YY04}.  As in the statement of Theorem~\ref{thm:4-rank-density}, $x$ will always denote a positive real number and $t$ will denote a positive integer.

The set $N_{t,x}$ is the disjoint union of subsets $N_{t,x}^{(i)}\ (i=1,2,3)$ defined by (all $p_i$ are odd distinct primes)
\begin{align*}
 &N_{t,x}^{(1)}:=\{m\in N_{t,x}\mid m=p_1\cdots p_t \equiv 3\pmod 4\};\\
 &N_{t,x}^{(2)}:=\{m\in N_{t,x}\mid m=p_1\cdots p_{t-1} \equiv 1\pmod 4\};\\
 &N_{t,x}^{(3)}:=\{m\in N_{t,x}\mid m=2p_1\cdots p_{t-1} \equiv 2\pmod 4\}.
\end{align*}	
Following \cite{Ger84}, we know that when $x\to \infty$,
\[\begin{split}
&\# N_{t;x}^{(1)} \sim \frac{1}{2}\frac{1}{(t-1)!}\frac{x(\log \log x)^{t-1}}{\log x}, \\
& \# N_{t;x}^{(2)} \sim \frac{1}{2}\frac{1}{(t-2)!}\frac{x(\log \log x)^{t-2}}{\log x}=o\left(\# N_{t;x}^{(1)} \right), \\ &\# N_{t;x}^{(3)} \sim \frac{1}{(t-2)!}\frac{x(\log \log (x/2))^{t-2}}{2\log (x/2)}=o\left(\# N_{t;x}^{(1)} \right). \end{split} \]
Here and after we denote $f(x)\sim g(x)$ if $\lim\limits_{x\to \infty} \frac{f(x)}{g(x)}=1$ and $f(x)=o(g(x))$ if  $\lim\limits_{x\to \infty} \frac{f(x)}{g(x)}=0$. Then
\begin{equation}\label{eq:gerth}
	 {\# N_{t;x}}\sim {\# N_{t;x}^{(1)}}\sim \frac{1}{2}\frac{1}{(t-1)!}\frac{x(\log \log x)^{t-1}}{\log x}.
\end{equation}
We define two equivalent relations in $N_{t,x}^{(1)}$.
\begin{defn}
For $m=p_1\cdots p_t, n=q_1\cdots q_t \in N_{t,x}^{(1)}$ arranging such that $p_1<p_2 <\cdots <p_t$ and $q_1<q_2 <\cdots <q_t$, we say that $m$ and $n$ have the same R\'edei type if $q_i\equiv p_i \pmod 4$ for $ i\leq t$ and $\lleg{q_j}{q_i}= \lleg{p_j}{p_i}$ for $1\leq j<i\leq t$; we say that $m$ and  $n$ have the same R\'edei type modulo $8$, if furthermore $q_i\equiv p_i \pmod 8$ for $ i\leq t$. Denote by $[m]$ (resp.  $[[m]]$)  the equivalence class of $m$ with the same R\'edei type (resp.  modulo $8$) respectively.
\end{defn}

\begin{lem}\label{lem:Yueqin}
 For any $m \in N_{t,x}^{(1)}$, we define
  \begin{align*}
     & R(m; t, x):=[m]\cap N_{t,x}^{(1)}=  \{m' \in N_{t,x}^{(1)}\mid m' \text{ and }~m\ \text{have the same R\'edei type}\}, \\
     & S(m; t, x):=[[m]]\cap N_{t,x}^{(1)}=\{m' \in N_{t,x}^{(1)}\mid m' \text{ and }~m\ \text{have the same R\'edei type modulo}~8\}.
  \end{align*}
 Then when $x \rightarrow \infty$, we have
 \begin{equation*}
  \# R(m; t, x) \sim ~ 2^{1-\frac{t^2+t}{2}}\cdot \# N_{t,x}^{(1)},
 \end{equation*}
 and
  \begin{equation*}
\# S(m; t, x) \sim ~  \frac{\# R(m; t, x)}{2^t}.
\end{equation*}
\end{lem}

\begin{proof}
See \cite[Lemma 2.1 and Corollary 2.2]{YY04}.
\end{proof}

\begin{remark}
As mentioned in Gerth \cite[Page 493]{Ger84}, an intuitive explanation of the above lemma might proceed as follows.
To decide the equivalence class $[m]$, we need to fix the conditions  $p_i\pmod 4$ for $l \leq i \leq t-1$ since $m=\prod_{i=1}^{t}p_i\equiv 3\pmod 4$, and the conditions $\lleg{p_j}{p_i}$ for $1\leq j<i\leq t$. Hence, there are $2^{\frac{t^2+t}{2}-1}$ equivalence classes and
the proportion of each equivalence class in $N_{t,x}^{(1)}$ is the same by the above lemma.
Furthermore, given a class $[m]$, then $\{p_1 \pmod 8,\cdots, p_t \pmod 8\}$ have $2^t$ choice. Hence there are $2^t$ modulo $8$ equivalence classes in $[m]$ and the proportion of each modulo $8$ equivalence class in $[m]$ is the same by the above lemma again.
\end{remark}

\begin{lem}\label{lem:2norm_den}
 Let $W(t,x)=\{m\in N_{t,x}^{(1)}\mid 2\in N(\BQ(\sqrt{-m}))\}.$ Then
  \[\lim\limits_{t\rightarrow \infty} \lim\limits_{x\rightarrow \infty} \frac{\#W(t,x)}{\# N_{t,x}^{(1)} }=0.\]
\end{lem}
\begin{proof}
  Put \[f(m)=\begin{cases}
            1, & \mbox{if } 2\in  N(\BQ(\sqrt{-m})) \\
            0, & \mbox{otherwise}.
          \end{cases}\]
Given an equivalence class $[m]$, we claim that there is exactly one class $[[n]]$ in $[m]$ such that $f(n)=1$. Indeed, $q_i\pmod 4$ is determined as $n=q_1\cdots q_t \in [m]$. Then by Hasse's norm theorem, $f(n)=1$ implies that $q_i$ must be $1\pmod 8$ (resp. $7\pmod 8$) in $1\pmod 4$ (resp. $3\pmod 4$). Hence follows the claim.

Now we have
\begin{align*}
   \lim\limits_{t\rightarrow \infty} \lim\limits_{x\rightarrow \infty} \frac{\#W(t,x)}{\# N_{t,x}^{(1)} } & = \lim\limits_{t\rightarrow \infty} \lim\limits_{x\rightarrow \infty} \frac{\sum\limits_{[m]} ~\sum\limits_{[[n]], n\in [m]}  f(n) \cdot \# S(n; t, x)}{\sum\limits_{[m]} ~ \sum\limits_{[[n]], n\in [m] } \# S(n; t, x)}\\
    & =\lim\limits_{t\rightarrow \infty} \lim\limits_{x\rightarrow \infty} \frac{\sum\limits_{[m]} {\# R(m; t, x)}/{2^t}}{\sum\limits_{[m]} \# R(m; t, x)}\\
   \label{eq:2 not Norm} & =\lim\limits_{t\rightarrow \infty} \frac{1}{2^t}=0,
 \end{align*}
where the second equality is by lemma~\ref{lem:Yueqin}.
\end{proof}

\begin{proof}[Proof of Theorem~\ref{thm:4-rank-density}]
By Theorem \ref{thm:rk4-T2(-m)}, Lemma~\ref{lem:2norm_den}, and the estimate \eqref{eq:gerth}, it suffices to prove that for $r\geq 0$,
\[\lim\limits_{t\rightarrow \infty} \lim\limits_{x\rightarrow \infty} \frac{\#\{m\in N_{t,x}^{(1)}\mid\rank~(\tau, R^{C})=t-1-r \}}{\#N_{t;x}^{(1)}}=\frac{\eta_\infty(2)}{2^{r(r+1)}\eta_r(2)\eta_{r+1}(2)}.\]
For any matrix $A\in M_{t-1}(\BF_2)$, write $\Im A:=\{Ax\mid x\in \BF_2^{t-1}\}$. Then we only need to prove that
\begin{align}\label{eq:tau_R1}
  \lim\limits_{t\rightarrow \infty} \lim\limits_{x\rightarrow \infty} \frac{\#\{m\in N_{t,x}^{(1)}\mid \rank~R^{C}=t-1-r, \tau\in \Im R^{C} \}}{\#N_{t;x}^{(1)}}=& \frac{1}{2^r}\cdot \frac{\eta_{\infty}(2)}{2^{r^2}\eta_r(2)^2}
  \end{align}
and
\begin{align}\label{eq:tau_R2}
  \lim\limits_{t\rightarrow \infty} \lim\limits_{x\rightarrow \infty} \frac{\#\{m\in N_{t,x}^{(1)}\mid \rank~R^{\Cl}=t-2-r,\tau \notin \Im R^{\Cl} \}}{\#N_{t;x}^{(1)}}=&\left(1-\frac{1}{2^{r+1}}\right)\cdot \frac{\eta_{\infty}(2)}{2^{(r+1)^2}\eta_{r+1}(2)^2},
\end{align}
since \[\frac{1}{2^r}\cdot \frac{\eta_{\infty}(2)}{2^{r^2}\eta_r(2)^2}+\left(1-\frac{1}{2^{r+1}}\right)\cdot \frac{\eta_{\infty}(2)}{2^{(r+1)^2}\eta_{r+1}(2)^2}= \frac{\eta_\infty(2)}{2^{r(r+1)}\eta_r(2)\eta_{r+1}(2)}.\]

By Lemma \ref{lem:Yueqin} and \cite[Remark 2.3, Equation (3.19)]{YY04}, in each equivalence class $[m]\subset N_{t,x}^{(1)}$, $\tau\in \Im R^{C}$ has probability $\frac{2^{t-1-r}}{2^{t-1}}=\frac{1}{2^r}$ if  $\rank~R^{C}=t-1-r$. i.e.,
\[\lim\limits_{t\rightarrow \infty} \lim\limits_{x\rightarrow \infty} \frac{\#\{m\in N_{t,x}^{(1)}\mid \rank~R^{C}=t-1-r, \tau\in \Im R^{C} \}}{\#\{m\in N_{t,x}^{(1)}\mid \rank~R^{C}=t-1-r\}}= \frac{1}{2^r}. \]
It is proved by Gerth in \cite{Ger84} that
\[\lim\limits_{t\rightarrow \infty} \lim\limits_{x\rightarrow \infty} \frac{\#\{m\in N_{t,x}^{(1)}\mid \rank~R^{C}=t-1-r\}}{\#N_{t;x}^{(1)}}= \frac{\eta_{\infty}(2)}{2^{r^2}\eta_r(2)^2}. \]
This implies the equation \eqref{eq:tau_R1}. The proof of the equation \eqref{eq:tau_R2} is similar and we leave the detail to the reader.
Thus
 \[d^T_{\infty,r}=\frac{1}{2^r}\cdot \frac{\eta_{\infty}(2)}{2^{r^2}\eta_r(2)^2}+\left(1-\frac{1}{2^{r+1}}\right)\cdot \frac{\eta_{\infty}(2)}{2^{(r+1)^2}\eta_{r+1}(2)^2}=\frac{\eta_\infty(2)}{2^{r(r+1)}\eta_r(2)\eta_{r+1}(2)}.\]
 This completes the proof of Theorem~\ref{thm:4-rank-density}. \end{proof}

\section{Study of $\CT_2(\pm l)$ and $\CT_2(\pm 2l)$ for odd prime $l$}\label{sec:special}
By the $2$-rank formula \eqref{eq:2-rankT}, if $l$ is a prime,  $\CT_2(\pm l)$ and $\CT_2(\pm 2l)$ are trivial if $l\equiv \pm 3\pmod{8}$, and  nontrivial cyclic $2$-groups if  $l\equiv \pm 1\pmod 8$. In what follows, we assume $l\equiv \pm 1\pmod{8}$ is a prime. In this section, we shall study the structures of  $\CT_2(\pm l)$ and $\CT_2(\pm 2l)$, or equivalently,  the $2$-power divisibility of their orders   $t_2(\pm l)$ and $t_2(\pm 2l)$.

\subsection{The imaginary case}
\begin{thm}\label{thm:special-family1}
	Let $l\equiv \pm 1 \pmod 8$ be a prime. Then $\CT_2(-l)$ and $\CT_2(-2l)$ are non-trivial cyclic $2$ groups, and
	
$(1)$  $t_2(-l)=2$ if  $l\equiv 7\pmod 8$, $t_2(-l)=4$ if  $l\equiv 9\pmod {16}$, and $t_2(-l)\geq 8$ if $l\equiv 1\pmod {16}$.
	
$(2)$ $t_2(-2l)=2$ if $l\equiv 7\pmod 8$ or $l\equiv 9\pmod {16}$, and $t_2(-2l)\geq 4$ if $l\equiv 1\pmod {16}$.		
\end{thm}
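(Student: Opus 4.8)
Since $l\equiv\pm1\bmod 8$, formula \eqref{eq:2-rankT} gives $\rk_2\CT_2(-l)=\rk_2\CT_2(-2l)=1$, so both groups are nontrivial cyclic $2$-groups; each then has order $2^n$ with $n$ the largest $i$ for which $\rk_{2^i}=1$, and $t_2=2$ iff $\rk_4=0$, $t_2=4$ iff $\rk_4=1$ and $\rk_8=0$, and $t_2\geq 8$ iff $\rk_8=1$. Hence the whole theorem reduces to computing the $4$-rank in all four cases together with the $8$-rank of $\CT_2(-l)$ when $l\equiv 1\bmod 8$.

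For the $4$-ranks I would feed these cases ($t=1$, one odd prime $l$) into the explicit R\'edei matrix of Theorem~\ref{thm:4-rankT}: here $A=\{a_0,a_1\}$ and $B=\{-1,l,\pi,\alpha\}$, so after deleting the identically-zero row $[a,2]=0$ the matrix $R$ has at most two rows and at most four columns, and since $\rk_2=1$ we have $\rk_4=1-\rank(R)$; everything therefore reduces to deciding whether $R=0$, a short computation of additive Hilbert/Artin symbols over the completions of $F$ at $\fp\mid 2$ and at $\fq\mid l$, performed with quadratic reciprocity and the explicit description of $F_\fp$. When $l\equiv 7\bmod 8$: for $F=\BQ(\sqrt{-l})$ the prime $2$ splits, $F_\fp=\BQ_2$, $\alpha=1$, and $[a_0,-1]=1$ since $(-1,-1)_2=-1$; for $F=\BQ(\sqrt{-2l})$ one has $F_\fp=\BQ_2(\sqrt 2)$, so $a_0=1$, while the $a_1$-row is nonzero because at $\fq$ it picks up $(\sqrt{-2l},l)_\fq=\leg{-2}{l}=-1$. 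In both cases $\rank(R)=1$, so $\rk_4=0$ and $t_2(-l)=t_2(-2l)=2$. When $l\equiv 1\bmod 8$: for $F=\BQ(\sqrt{-l})$ one has $F_\fp=\BQ_2(\sqrt{-1})$ and every entry of $R$ vanishes — the $a_0$-rows because each $F(\sqrt b)$, $b\in B$, either lies in the cyclotomic $\BZ_2$-tower of $F$ or is already split by $F_\fp=\BQ_2(\sqrt{-1})$, and the $a_1$-rows by reciprocity since $l\equiv 1\bmod 8$ — whence $\rk_4(\CT_2(-l))=1$; for $F=\BQ(\sqrt{-2l})$ the matrix reduces to the single row $([a_1,b])_{b\in B}$, whose only candidate nonzero entry is controlled by whether $\sqrt l$ is a square in the relevant completion, i.e.\ by whether $l\equiv 1$ or $9\bmod 16$, giving $t_2(-2l)=2$ for $l\equiv 9\bmod 16$ and $t_2(-2l)\geq 4$ for $l\equiv 1\bmod 16$.

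It remains to separate $t_2(-l)=4$ from $t_2(-l)\geq 8$ when $l\equiv 1\bmod 8$. By Proposition~\ref{prop:CT2} and the exact sequence \eqref{eq:2.10}, $\CT_2(-l)[2]$ is generated by the class $\hat a_0$ of the id\`ele $a_0=(\dots,\sqrt{-1}_\fp,\dots)$, the image of the order-$2$ torsion of $U_{1,S}/\overline{\phi(E_{1,S})}$. As $\CT_2(-l)$ is cyclic with $\hat a_0$ its unique element of order $2$, one has $t_2(-l)\geq 8$ if and only if $\hat a_0\in 4\,\Gal(M/F)$, equivalently (by Pontryagin duality) if and only if $(a_0,N/F)=1$ for every cyclic quartic extension $N/F$ unramified outside $2$; since $a_0$ is supported only at $\fp$, this Artin symbol equals the local symbol $(\sqrt{-1},N_\fp/F_\fp)$, so the condition is that $\sqrt{-1}$ be a local norm from every such $N_\fp$. (The cyclotomic and quadratic directions contribute nothing: $\hat a_0$ has order $2$ while the cyclotomic $\BZ_2$-Galois group is torsion-free, and the $a_0$-row of $R$ together with $[a_0,2]=0$ kills the quadratic part.) For the $t_2(-l)=4$ half ($l\equiv 9\bmod 16$) it suffices to exhibit one cyclic quartic $N/F$ unramified outside $2$ — built from a generator $\pi$ of a power of $\fp$ and from $\alpha=h+\sqrt{-l}$ with $l=2g^2-h^2$, whose square-ideal structure is recorded in Proposition~\ref{prop:selmer} — with $(\sqrt{-1},N_\fp/F_\fp)\neq 1$, a biquadratic-symbol computation at $2$ governed by $l\bmod 16$. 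For the $t_2(-l)\geq 8$ half ($l\equiv 1\bmod 16$) I would instead import Coates' order formula \cite{Coa77} for the real companion $\BQ(\sqrt l)$ via a comparison between $\CT_2(\BQ(\sqrt{-l}))$ and the totally-real data (or else construct directly an element of order $8$ in $\CT_2(-l)$).

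The main obstacle is exactly this lower bound $t_2(-l)\geq 8$: the naive character-by-character criterion does not suffice, because one of the quartic characters to be tested is the diagonal character of $\CT_2(-l)$ itself, and checking $\hat a_0$ against it is tautologically the assertion $t_2\geq 8$; so a genuinely new input is required — Coates' formula through a comparison theorem in the spirit of Theorem~\ref{thm:compareTK} valid for these $m$, or an explicit octic extension of $\BQ(\sqrt{-l})$ unramified outside $2$. A secondary technical point throughout the previous paragraph is that at the wildly ramified prime $2$ one must match global extensions $N$ with their completions $N_\fp$ with care (Grunwald–Wang). The statements for $\CT_2(-2l)$, by contrast, need only the $4$-rank computation and carry no comparable difficulty.
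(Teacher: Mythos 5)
Your reduction of the $4$-rank statements to the R\'edei matrix of Theorem~\ref{thm:4-rankT} is essentially workable (though your symbol evaluations keep only one local component at a time — e.g.\ $[a_1,l]$ for $\BQ(\sqrt{-2l})$, $l\equiv 7\bmod 8$, is a \emph{sum} of contributions at $\fq$ and at the ramified prime above $2$ — these are fixable details). The genuine gap is exactly where you flag it: separating $t_2(-l)=4$ ($l\equiv 9\bmod{16}$) from $t_2(-l)\geq 8$ ($l\equiv 1\bmod{16}$). Testing the order-$2$ generator $\hat a_0$ against all characters of order dividing $4$ is, as you admit, circular for the lower bound, and neither of your proposed rescues is available: Coates' formula \eqref{eq:coates} applies only to totally real fields, and the paper's comparison result (Theorem~\ref{thm:compareTK}) requires $m\equiv 3\bmod 4$ with $\pm 2\notin N(\tF)$, so it provides no bridge between $\CT_2(l)$ and $\CT_2(-l)$ when $l\equiv 1\bmod 8$; nor do you construct the octic extension you invoke as the alternative. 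Hence the $l\equiv 1\bmod 8$ parts of (1), and with them the fine structure in (2), are not proved by your argument.

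The paper closes this by a simpler device that avoids quartic characters altogether: when $l\equiv 1\bmod 8$ one has $F_\fp=\BQ_2(\sqrt{-1})$, and the id\`ele $a$ with component $1+\sqrt{-1}$ at $\fp$ gives an \emph{explicit element of order $4$} in $\CT_2(-l)$ (indeed $a^4$ lies in $\overline{F^\times\prod_{v\notin S}U_v}$ while $a^2$ is congruent to the id\`ele with component $\sqrt{-1}$, which is nontrivial). Since $\CT_2(-l)$ is cyclic, $\hat a$ generates $\CT_2(-l)[4]$, and $t_2(-l)\geq 8$ holds if and only if $\hat a$ maps to $0$ in $\Gal(L/F)$ with $L=F(\sqrt{-1},\sqrt{2},\sqrt{\alpha})$, i.e.\ if and only if the \emph{quadratic} symbols $[a,-1]$, $[a,2]$, $[a,\alpha]$ all vanish. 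A direct Hilbert-symbol computation gives $[a,-1]=[a,2]=0$ and $[a,\alpha]=\lhilbert{\sqrt{l}}{2}{\BQ_2}$, which is $0$ precisely when $l\equiv 1\bmod{16}$; this yields both the $=4$ and the $\geq 8$ halves at once, and the same computation disposes of (2). (For $l\equiv 7\bmod 8$ the paper gets $t_2(-l)=2$ even more directly from \eqref{eq:2.3}, since $h(-l)$ is odd.) As written, your proposal therefore does not constitute a proof of the theorem; the missing idea is precisely this passage from the order-$2$ generator to an explicitly constructed order-$4$ element, which converts the $8$-divisibility question into quadratic symbol computations.
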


\begin{remark} Based on numerical data, we find out that the conditions $t_2(-l)=2^i$ for $i\geq 3$ and $t_2(-2l)=2^i$ for $i\geq 2$ are not classified by congruence relations.
\end{remark}

\begin{proof} (1) Let $F=\BQ(\sqrt{-l})$. We consider (i)  $l\equiv 7\pmod 8$ and (ii)	
 $l\equiv 1\pmod 8$ separately.

\noindent (i)   In this case $2\nmid h(-l)$ by genus theory. From the commutative diagram \eqref{eq:2.3}, we have
\[
\CT_2(-l)\cong \left(\left(\BZ^\times_2 \times \BZ^\times_2\right)/{\pm 1}\right) [2^\infty]  \cong \BZ/2\BZ.
\]

\noindent (ii)  In this case $2$ is ramified and $F_\fp=\BQ_2(\sqrt{-1})$.

 Let $a= ( \cdots, \underset{\fp}{1+\sqrt{-1}}, \cdots )\in \BA^\times_F$ and $\hat{a}$ be its image in $\CA_F$; here we recall that $\CA_F$ is the group defined in \eqref{eq:2.1} with $p=2$. Then $a^4= ( \cdots, \underset{\fp}{-4}, \cdots ) \in \overline{F^\times \prod_{v\notin S} U_v} $ and hence $\hat{a}\in \CT_2(F)[4]$. Since $a^2= (\cdots, \underset{\fp}{2\sqrt{-1}}, \cdots  ) \equiv (\cdots, \underset{\fp}{\sqrt{-1}}, \cdots  ) \bmod \left(\overline{F^\times \prod_{v\notin S} U_v}\right)$ and$\sqrt{-1}$ is nontrivial in $ U_{1,\fp}/\{\pm1\} \subset {\CA}^{\mathrm{pro-}2}_F$, we have $\hat{a}^2 \neq 0$ in $\CT_2(F)$. Thus $\hat{a}$ is a generator of the cyclic group $\CT_2(F)[4]$.

The $2$-units $E_S$ of $F$ is generated by $-1$ and $2$. Clearly, $2\notin N( E_S)$. Write $l=2g^2-h^2$. Let $\alpha =h+\sqrt{-l}$. By Proposition~\ref{prop:selmer}, $L=F\left(\sqrt{-1},\sqrt{l},\sqrt{2},\sqrt{\alpha}\right)=F\left(\sqrt{-1},\sqrt{2},\sqrt{\alpha}\right)$ is the maximal abelian extension of exponent $2$ over $F$ unramified outside $2$. The map
 \[ \CT_2(F)[4]\rightarrow \Gal(F(\sqrt{-1})/F)\times \Gal(F(\sqrt{2})/F)\times \Gal(F(\sqrt{\alpha})/F)   \]
has kernel $2\CT_2(F)[8]$. Thus $t_2(-l)\geq 8$ if and only if the additive Artin symbols $[a, -1]=[a, 2]=[a,\alpha]=0$. It is easy to see $[a, 2]=[a,-1]=0$ since $l\equiv 1\pmod 8$.
We have
\[\begin{split}
[a, h+\sqrt{-l}]= & \lhilbert{1+\sqrt{-1}}{h+\sqrt{-l}}{F_\fp}  = \lhilbert{-\sqrt{l}}{h+\sqrt{-l}}{F_\fp}+ \lhilbert{-\sqrt{l}-\sqrt{-l}}{h+\sqrt{-l}}{F_\fp} \\
= &  \lhilbert{-\sqrt{l}}{2g^2}{\BQ_2} + \lhilbert{-\sqrt{l}-\sqrt{-l}}{h+\sqrt{-l}}{F_\fp}\\
=  &  \lhilbert{\sqrt{l}}{2}{\BQ_2} +\lhilbert{-\sqrt{l}-\sqrt{-l}}{h+\sqrt{-l}}{F_\fp}.\\
\end{split}
\]
Note that
\[
\lhilbert{\sqrt{l}}{2}{\BQ_2}=
\begin{cases}
0 & \text{ if } l\equiv 1\pmod {16},\\
1 & \text{ if } l\equiv 9\pmod {16}.
\end{cases}
\]
For any $x,y\in F_\fp$, noting that $-1$ is a square in $F_\fp$, we have  \[0=\lhilbert{\frac{x}{x+y}}{\frac{y}{x+y}}{F_\fp}= \lhilbert{xy}{x+y}{F_\fp}+ \lhilbert{x+y}{x+y}{F_\fp}+\lhilbert{x}{y}{F_\fp}.\] Put $x= -\sqrt{l}-\sqrt{-l}, y= h+\sqrt{-l}$. Note that $x+y\in \BQ_2$. It follows that $\lhilbert{x+y}{x+y}{F_\fp}=0$.
Thus,
\[
\lhilbert{x}{y}{F_\fp}=\lhilbert{x+y}{xy}{F_\fp}=\lhilbert{x+y}{4g^2 l }{\BQ_2}=0.
\]
This proves (1).

(2) follows from the same argument used in the proof of (1). We omit the details here.
\end{proof}

\subsection{The real case}
We need the following formula of Coates (see \cite[Appendix]{Coa77} or \cite[Chapter III. 2.6.5]{Gra03}):
\begin{prop}\label{prop:coates}
	Let  $K\neq \BQ$  be a totally real number field. Assume that the Leopoldt Conjecture holds for $(p,K)$, i.e., $\delta_p(K)=0$. Then
	\begin{equation}\label{eq:coates}
	\# \CT_{p}(K)=(p \text{-adic unit})\cdot \frac{p\cdot[K \cap \BQ^{p,\cyc} : \BQ]\cdot h(K) \cdot R_p(K)}{\sqrt {D_K} \cdot \prod\limits_{\mathfrak{p}|p} N\mathfrak{p}}. \end{equation}
	Here $h(K)$ is the class number,  $R_p(K)$ is the  $p$-adic regulator and $D_K$ is the discriminant of $K$, $\BQ^{p,\cyc}$ is the cyclotomic $\BZ_p$-extension of $\BQ$, and the product runs over all primes of $K$ lying above $p$ and $N$ is the norm map from $K$ to $\BQ$.
\end{prop}

\begin{lem}\label{lem:units}
Assume $l \equiv \pm1 \pmod8$ is a prime. Let $\nu_2$ be the normalized $2$-adic valuation and $\log_2$ be the $2$-adic logarithmic map. For $m=l$ or $2l$, let $\varepsilon_m=a_m+b_m\sqrt{m}$ be the fundamental unit of $\BQ(\sqrt{m})$. Then
\begin{enumerate}
	\item $\nu_2(t_{2}(l))= \nu_2(\log_2(\varepsilon_l) ) -1 = \nu_2(a_l)-1$.
	\item $\nu_2(t_2(2l))=\nu_2(h(2l))+\nu_2(b_{2l})-1.$
\end{enumerate}
\end{lem}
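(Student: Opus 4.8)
The proof will apply Coates' formula \eqref{eq:coates} to the real quadratic field $K = \BQ(\sqrt{m})$ with $p = 2$, for $m \in \{l, 2l\}$. Since $l \equiv \pm 1 \bmod 8$, the prime $2$ splits in $\BQ(\sqrt{l})$ (so $\prod_{\fp \mid 2} N\fp = 4$) and ramifies in $\BQ(\sqrt{2l})$ (so $\prod_{\fp \mid 2} N\fp = 2$). In both cases $[K \cap \BQ^{2,\cyc}:\BQ] = 1$ because $\BQ(\sqrt{m})$ is not contained in the cyclotomic $\BZ_2$-extension of $\BQ$ (whose first layer is $\BQ(\sqrt{2})$). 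The discriminant is $D_K = l$ or $8l$ respectively, but $\sqrt{D_K}$ only contributes a $2$-adic unit times a power of $2$ matching the ramification, so after taking $\nu_2$ the discriminant and local norm factors either cancel or combine into an explicit constant. The upshot is that $\nu_2(t_2(m))$ equals $\nu_2$ of $\frac{2 \cdot h(m) \cdot R_2(m)}{\prod_{\fp\mid 2} N\fp}$, and the whole problem reduces to computing $\nu_2(R_2(m))$, the $2$-adic valuation of the $2$-adic regulator.

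For a real quadratic field the $2$-adic regulator is (up to sign and a $2$-adic unit) $\log_2(\varepsilon_m)$ where $\varepsilon_m = a_m + b_m\sqrt{m}$ is the fundamental unit; more precisely $R_2(m) = \frac{1}{[\,\cdot\,]}\log_2(N_{K/\BQ_2}\text{-normalized embedding of }\varepsilon_m)$, and one must be slightly careful about the normalization of the $2$-adic logarithm and which embedding(s) of $\varepsilon_m$ into $\BQ_2$-completions are used. The key computation is then:

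\textit{Case $m = l$:} Here $\varepsilon_l$ is a unit in $\CO_K$, $N(\varepsilon_l) = \pm 1$, and since $2$ splits, $\sqrt{l} \in \BZ_2$; writing $\varepsilon_l = a_l + b_l\sqrt{l}$ with $a_l^2 - l b_l^2 = \pm 1$, one finds $a_l$ is even (because $l \equiv 1 \bmod 8$ forces $l b_l^2 \equiv a_l^2 \mp 1$ and a parity analysis). Then $\varepsilon_l \equiv 1 \bmod 2$ in one completion, so $\log_2(\varepsilon_l)$ converges and $\nu_2(\log_2 \varepsilon_l) = \nu_2(\varepsilon_l - 1)$ up to the usual correction; a short argument identifies $\nu_2(\log_2\varepsilon_l) = \nu_2(a_l - 1)$ or $\nu_2(2a_l)$, which after accounting for the $N\fp = 4$ denominator and the factor $2$ in the numerator gives $\nu_2(t_2(l)) = \nu_2(a_l) - 1$, and separately $\nu_2(\log_2\varepsilon_l) = \nu_2(a_l)$ gives the first equality of statement (1). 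Note $h(l)$ is odd since $l$ is prime $\equiv 1 \bmod 4$, so it contributes nothing.

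\textit{Case $m = 2l$:} Here $2$ ramifies, $\sqrt{2l}$ generates the maximal ideal of $\CO_{K,\fp}$ up to units, and $\nu_{\fp}(b_{2l}\sqrt{2l}) $ is odd-ish; the fundamental unit satisfies $a_{2l}^2 - 2l b_{2l}^2 = \pm 1$, forcing $a_{2l}$ odd. Then $\varepsilon_{2l} - (\pm 1) = (a_{2l} \mp 1) + b_{2l}\sqrt{2l}$ and the $2$-adic valuation of $\log_2\varepsilon_{2l}$ is governed by $\nu_2(b_{2l}\sqrt{2l})$, i.e.\ by $\nu_2(b_{2l})$ plus a half-integer shift from $\sqrt{2l}$; combining with $N\fp = 2$ and the numerator factors $2$ and $h(2l)$ (whose $2$-part genuinely appears, unlike in case (1)) yields $\nu_2(t_2(2l)) = \nu_2(h(2l)) + \nu_2(b_{2l}) - 1$.

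\textbf{Main obstacle.} The delicate point is pinning down the exact normalization in Coates' formula — the definition of $R_2(K)$ (which $2$-adic log, whether one uses $\log_2$ of a unit directly or $\frac{1}{2}\log_2$ of its square to ensure convergence, and the precise determinant of $\log_2$ of a system of fundamental units) — and then matching the extra powers of $2$ coming from $\sqrt{D_K}$, $\prod_{\fp\mid 2}N\fp$, and the leading factor $p = 2$ so that everything collapses to the clean formulas stated. In the ramified case $m = 2l$ one additionally has to handle the half-integral $\fp$-valuation of $\sqrt{2l}$ correctly and confirm that the $2$-adic logarithm indeed converges on (a power of) $\varepsilon_{2l}$; these bookkeeping steps, rather than any deep idea, are where the care is needed. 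Parity statements such as ``$a_l$ even, $a_{2l}$ odd'' are elementary consequences of the Pell equation $a^2 - m b^2 = \pm 1$ together with $m \equiv 1 \bmod 8$ or $m \equiv 2 \bmod{16}$-type congruences, and can be dispatched quickly.
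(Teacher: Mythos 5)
Your overall route --- Coates' formula \eqref{eq:coates} together with a computation of $\nu_2$ of the $2$-adic regulator via congruences on the fundamental unit --- is exactly the paper's, and your bookkeeping of the factors $p=2$, $\sqrt{D_K}$ and $\prod_{\fp\mid 2}N\fp$ does land on the right constants (note, though, that for $l\equiv 7\bmod 8$ the prime $2$ \emph{ramifies} in $\BQ(\sqrt{l})$ and $D_K=4l$, not ``$2$ splits, $D_K=l$'' as you assert; the two changes happen to cancel, so the shift $-1$ is unaffected). The genuine gap is in part (1) for $l\equiv 7\bmod 8$. There $N(\varepsilon_l)=+1$, and the parity statement ``$a_l$ is even'' is \emph{not} an elementary consequence of the Pell equation $a^2-lb^2=1$: that equation has solutions with $a$ odd (e.g.\ $127^2-7\cdot 48^2=1$, which is $\varepsilon_7^2$), so congruence analysis of the equation alone cannot determine the parity of the fundamental solution; and if $a_l$ were odd and $b_l$ even, the asserted identity $\nu_2(\log_2\varepsilon_l)=\nu_2(a_l)$ would fail (the valuation would then be governed by $\nu_2(b_l)$, since $\varepsilon_l^4-1=4a_lb_l\sqrt{l}\,\varepsilon_l^2$ when the norm is $+1$). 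The paper proves $a_l$ even by a genuinely arithmetic argument: $h(l)$ is odd, so the ramified prime $\fp$ above $2$ is principal, $\pi^2/2$ is an odd power of $\varepsilon_l$, hence $2\varepsilon_l=(c+d\sqrt{l})^2$ with $c,d$ odd, giving $4\mid a_l$ and $b_l$ odd. Your claim that such parity facts ``can be dispatched quickly'' from the Pell equation skips precisely this step (the paper even flags it in a remark, citing \cite{ZY14}).

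Two smaller imprecisions. For $l\equiv 1\bmod 8$ your parity analysis tacitly requires $N(\varepsilon_l)=-1$ (classical for a prime $\equiv 1\bmod 4$); with norm $+1$ the same congruences would force $a_l$ odd, so ``$N(\varepsilon_l)=\pm1$ and a parity analysis'' is not sufficient as written. And $\nu_2(\log_2\varepsilon_l)$ is not ``$\nu_2(a_l-1)$ or $\nu_2(2a_l)$ up to the usual correction'': the clean route is the conjugate trick, $\varepsilon_l^2-1=2a_l\varepsilon_l$ when the norm is $-1$, $\varepsilon_l^4-1=4a_lb_l\sqrt{l}\,\varepsilon_l^2$ when it is $+1$ (and $\varepsilon_{2l}^4-1=4a_{2l}b_{2l}\sqrt{2l}\,\varepsilon_{2l}^2$ in part (2)), combined with the fact that $\nu_2(\log_2 x)=\nu_2(x-1)$ only once $\nu_2(x-1)>1$ --- which is why one must pass to $\varepsilon^2$ or $\varepsilon^4$ before taking logarithms; in part (2) this is also where the half-integral valuation of $\sqrt{2l}$ enters and produces $\nu_2(\log_2\varepsilon_{2l})=\tfrac12+\nu_2(b_{2l})$.
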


\begin{proof} (1) Let $F=\BQ(\sqrt{l})$. Recall that the $2$-adic regulator  $R_2(F)$ is $\log_2(\varepsilon_l)$. By Coates' formula above, we have $\nu_2(t_{2}(l))=\nu_2(\log_2(\varepsilon_l))-1$ as $2\nmid h(l)$. It remains to show that $\nu_2(\log_2(\varepsilon_l))=\nu_2(a_l)$. We shall use the basic property of logarithm that, for $x\in \overline{\BQ}_2$, if $\nu_2(x-1)>1$ then $ \nu_2( \log_2(x) )=\nu_2(x)$.

If $l\equiv 1\pmod 8$,  then it is easy to see $a_l$ and $b_l$ are integers. It is also known that    $N(\varepsilon_l)=a^2_l- l b^2_l=-1$. It follows that  $4\mid a_l$ and $b_l$ is odd. Thus $\nu_2(\varepsilon^2_l-1)=\nu_2(\varepsilon^2_l+\varepsilon_l\bar{\varepsilon}_l)=1+\nu_2(a_l)\geq 3$. This implies that $\nu_2(\log_2(\varepsilon^2_l))=\nu_2(\varepsilon^2_l-1)=1+\nu_2(a_l)$. Hence, $\nu_2(\log_2(\varepsilon_l) )=\nu_2(a_l)$.

If $l\equiv 7\pmod 8$, we first prove that $a_l$ is even.
In this case $2$ is ramified in $F$, say $2\CO_F=\fp^2$. Since $h(l)$ is odd, $\fp$ must be principal, say $\fp=(\pi)$ with $\pi \in \CO_F$. Then $\pi^2/2$ is a unit, say $\varepsilon_l
^k$. Note that $k$ must be odd. Otherwise, $\sqrt{2}\in F$, which is absurd. Then $(\pi \varepsilon_l^{-(k-1)/2})^2=2\varepsilon_l$ and hence $\pi \varepsilon_l^{-(k-1)/2}\in \CO_F$.  Write $\pi \varepsilon_l^{-(k-1)/2}=c+d\sqrt{l}$ with $c,d\in \BZ$. Then $c$ and $d$ must be odd since $N(c+d\sqrt{l})=2$. Hence $a_l=\frac{c^2+d^2l}{2}$ is clearly even.

Thus,  $b_l$ must be odd. Then $ \nu_2(\varepsilon^4_l-1)=\nu_2(\varepsilon_l^4-\varepsilon_l^2 \bar{\varepsilon}_l^2)=2+\nu_2(a_lb_l)=2+\nu(a_l)$.  Therefore, $\nu_2(\log_2(\varepsilon_l))=\nu_2(a_l)$. This completes the proof of (1).

(2) 	Clearly $a_{2l}$ is odd and $b_{2l}$ is even. 		
We have
 \[ \nu_2(\varepsilon^4_{2l}-1) = \nu_2(\varepsilon_{2l}^2+\varepsilon_{2l}\bar{\varepsilon}_{2l})+ \nu_2(\varepsilon_{2l}^2-\varepsilon_{2l}\bar{\varepsilon}_{2l}) = \nu_2(2a_{2l} )+\nu_2(2\sqrt{2l}b_{2l})=\frac{5}{2}+\nu_2(b_{2l}). \]
Hence, $\nu_2(\log_2(\varepsilon_{2l}))= \frac{1}{2}+\nu_2(b_{2l})$. Then (2) follows from Coates' formula for $\CT_2(\BQ(\sqrt{2l}))$.
\end{proof}
\begin{remark} The proof of $a_l$ is even for $l\equiv 7\pmod 8 $ holds for $l\equiv 3\pmod{4}$. For a different proof of this fact, see \cite{ZY14}.
\end{remark}

 The following proposition collects results about the $2$-class groups $\Cl_2(-l)$ and $\Cl_2(-2l)$  due to Gauss, Hasse~\cite{Has69}, Brown~\cite{Brown72} and others, most importantly due to Leonard-Williams~\cite{LW82}, see \cite[Theorem 4.2]{LX20} for a proof about  $\Cl_2(-2l)$.
\begin{prop} \label{prop:l78} Let $l$ be an odd prime. Then both $\Cl_2(-l)$ and $\Cl_2(-2l)$ are cyclic groups.
	
$(1)$ $h_2(-l)=1$ if $l\equiv 3\pmod 4$,
	 $h_2(-l)=2$ if $l\equiv 5\pmod 8$ and $h_2(-l)\geq 4$ if $l\equiv 1\pmod 8$. Moreover, if $l\equiv 1\pmod{8}$, suppose $l=2g^2-h^2$, then $h_2(-l)=4$ if and only if $g\equiv 3\pmod{4}$,  $h_2(-l)=8$ if and only if
	 $\Bigl(\frac{2h}{g}\Bigr) \Bigl(\frac{g}{l}\Bigr)_4=-1$.

$(2)$ 	 $h_2(-2l)=2$ if $l\equiv \pm 3\pmod 8$  and $h_2(-2l)\geq 4$ if $l\equiv \pm 1\pmod 8$. Moreover,
\begin{enumerate}
	\item[(i)] If $l\equiv 1\pmod{8}$, suppose $l=u^2-2v^2$ such that $u\equiv 1\pmod{4}$, then $h_2(-2l)=4$ if and only if $u\equiv 5\pmod{8}$, $h_2(-2l)=8$ if and only if $\Bigl(\frac{u}{l}\Bigr)_4=-1$.

	\item[(ii)] If $l\equiv 7\pmod{8}$, then  $h_2(-2l)=4$ if and only if $l\equiv 7\pmod{16}$, and $h_2(-2l)=8$ if and only if $l\equiv 15\pmod{16}$ and $(-1)^{\frac{l+1}{16}} \left(\frac{2u}{v}\right)=-1$ where $(u,v)\in \BZ_{>0}^2$ satisfying $l=u^2-2v^2$.
	\end{enumerate}	
\end{prop}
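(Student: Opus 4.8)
The plan is to assemble the proposition from three layers of input: classical genus theory for the $2$-rank and cyclicity, R\'edei--Reichardt theory for the $4$-rank, and the deeper $8$-rank (and $\bmod 16$) criteria of Hasse, Brown, Barrucand--Cohn and especially Leonard--Williams, which I would invoke rather than reprove.

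First, cyclicity and the crude bounds. For $F=\BQ(\sqrt{-l})$ the discriminant is $-l$ if $l\equiv 3\bmod 4$ and $-4l$ if $l\equiv 1\bmod 4$, so the number of ramified primes is $1$ or $2$; for $F=\BQ(\sqrt{-2l})$ the discriminant is $-8l$, with exactly $2$ ramified primes. By genus theory, $\dim_{\BF_2}\Cl(F)[2]$ equals the number of ramified primes minus $1$, hence it is $0$ or $1$ for $\Cl_2(-l)$ and exactly $1$ for $\Cl_2(-2l)$ (this is the same count already used in the proof of Lemma~\ref{lem:Cl2}). A finite abelian $2$-group of $2$-rank $\leq 1$ is cyclic, giving the cyclicity assertions. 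The same count gives $h_2(-l)=1$ for $l\equiv 3\bmod 4$, and otherwise $h_2(-l)\geq 2$, $h_2(-2l)\geq 2$.

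Second, the $4$-rank, i.e.\ distinguishing $h_2=2$ from $h_2\geq 4$. In each remaining case $\Cl_2$ is cyclic of $2$-rank $1$, so $h_2\geq 4$ if and only if the generator of $\Cl_2[2]$ (an ambiguous ideal class supported on a ramified prime) is a square in $\Cl_2$; by R\'edei's theorem this is detected by a $1\times 1$ R\'edei symbol, i.e.\ a Legendre/Jacobi condition on the two ramified primes. Carrying this out yields $h_2(-l)=2$ exactly when $\leg{2}{l}=-1$, that is $l\equiv 5\bmod 8$, and $h_2(-l)\geq 4$ when $l\equiv 1\bmod 8$; likewise $h_2(-2l)=2$ for $l\equiv\pm 3\bmod 8$ and $h_2(-2l)\geq 4$ for $l\equiv\pm 1\bmod 8$. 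Concretely I would exhibit the ambiguous class explicitly and test squareness, reducing to these symbols.

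Third --- and this is the genuine content and the main obstacle --- the $8$-rank criteria: that $h_2(-l)=4$ exactly when $g\equiv 3\bmod 4$ (for $l=2g^2-h^2$), that $h_2(-l)=8$ exactly when $\leg{2h}{g}\leg{g}{l}_4=-1$, and the analogous statements for $h_2(-2l)$ in cases (i) and (ii), including the $\bmod 16$ description and the octic-flavoured criterion $(-1)^{(l+1)/16}\leg{2u}{v}=-1$ when $l\equiv 15\bmod 16$. These are not formal consequences of genus theory: they require tracking a generator of the cyclic group $\Cl_2$ through explicit ideal factorizations and applying quartic (and, for the $\bmod 16$ statements, finer) reciprocity, exactly the kind of delicate computation done by Leonard--Williams. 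Here my plan is simply to invoke those classical determinations: for $\Cl_2(-l)$ the results of Hasse~\cite{Has69}, Brown~\cite{Brown72}, and Leonard--Williams~\cite{LW82}; for $\Cl_2(-2l)$ the criterion of Leonard--Williams when $l\equiv 1\bmod 8$, and the $\bmod 16$ classification proved in \cite[Theorem 4.2]{LX20} when $l\equiv 7\bmod 8$. The only thing that actually needs checking is that the normalizations used in those sources match ours --- positivity of $g,h,u,v$, the choice $u\equiv 1\bmod 4$ in $l=u^2-2v^2$, and the parametrization $l=2g^2-h^2$ --- after which the cited formulas reproduce the proposition verbatim. I expect this sign-and-normalization bookkeeping across several papers to be the fussiest point, but there is no real obstruction, since every ingredient is already available in the literature named in the statement.
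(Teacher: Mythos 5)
Your proposal matches the paper's treatment: the paper offers no proof of this proposition, presenting it explicitly as a collection of classical results of Gauss (genus theory), Hasse, Brown, and above all Leonard--Williams, with \cite[Theorem 4.2]{LX20} cited for the $\Cl_2(-2l)$ statements — exactly the sources you invoke, with your genus-theory and R\'edei remarks covering only the standard elementary parts. So the approach is essentially the same, and your plan is correct provided the normalization bookkeeping you flag is carried out against those references.
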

We have the following theorem:

\begin{thm} \label{thm:l78}
	Assume $l\equiv 7\pmod 8$ is a prime.  Then $\CT_2(l)$ and $\CT_2(2l)$ are non-trivial $2$-cyclic groups, 	$4\mid t_2(l)$ and
		\begin{enumerate}
		\item	$t_2(l)=4\Leftrightarrow t_2(2l)=2 \Leftrightarrow h_2(-2l)=4\Leftrightarrow  l\equiv 7\pmod{16}$;
	\item   	$t_2(l)=8\Leftrightarrow t_2(2l)=4 \Leftrightarrow h_2(-2l)=8\Leftrightarrow\ l\equiv 15\pmod{16}$ and $(-1)^{\frac{l+1}{16}} \left(\frac{2u}{v}\right)=-1$ where $(u,v)\in \BZ_{>0}^2$ is a solution of $l=X^2-2Y^2$.
		\end{enumerate}	
Consequently, we always have $t_2(l)\equiv 2t_2(2l) \equiv h_2(-2l)\pmod{16}$.
\end{thm}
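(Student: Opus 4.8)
\emph{Reduction to $2$-adic valuations.} By \eqref{eq:2-rankT} with $t=1$ (and $l\equiv\pm 1\bmod 8$) we have $\rk_2\CT_2(l)=\rk_2\CT_2(2l)=1$, so $\CT_2(l)$ and $\CT_2(2l)$ are nontrivial cyclic $2$-groups, and $\Cl_2(-2l)$ is cyclic by Proposition~\ref{prop:l78}. Writing $\nu_2$ for the $2$-adic valuation, the whole theorem reduces to the single chain of equalities
\begin{equation*}
\nu_2(t_2(l))=\nu_2(h_2(-2l))=\nu_2(t_2(2l))+1,\qquad l\equiv 7\bmod 8\ \text{prime},
\end{equation*}
together with the value of $\nu_2(h_2(-2l))$, which Proposition~\ref{prop:l78}(2)(ii) gives as: $2$ iff $l\equiv 7\bmod{16}$, and $3$ iff $l\equiv 15\bmod{16}$ and $(-1)^{(l+1)/16}\bigl(\tfrac{2u}{v}\bigr)=-1$. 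Granting the chain, (1) and (2) are immediate, $\nu_2(t_2(l))\geq 2$ (i.e.\ $4\mid t_2(l)$) because $\nu_2(h_2(-2l))\geq 2$ always, and the closing congruence is formal: in case (1) each of $t_2(l),\,2t_2(2l),\,h_2(-2l)$ equals $4$, in case (2) each equals $8$, and otherwise each is $\equiv 0\bmod{16}$.

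\emph{The real fields.} For $\nu_2(t_2(l))$ and $\nu_2(t_2(2l))$ I use Lemma~\ref{lem:units}: $\nu_2(t_2(l))=\nu_2(a_l)-1$ and $\nu_2(t_2(2l))=\nu_2(h(2l))+\nu_2(b_{2l})-1$, with $\varepsilon_l=a_l+b_l\sqrt l$ and $\varepsilon_{2l}=a_{2l}+b_{2l}\sqrt{2l}$. Since $l\equiv 3\bmod 4$, Lemma~\ref{lem:hasse} forces $N\varepsilon_l=N\varepsilon_{2l}=1$; combining $(a_l-1)(a_l+1)=lb_l^2$ with the parity facts $a_l$ even, $b_l$ odd (from the proof of Lemma~\ref{lem:units}) and with $-2\notin N(\BQ(\sqrt l))$ for $l\equiv 7\bmod 8$ (Lemma~\ref{lem:hasse}), the factorisation into coprime odd factors is forced to be $a_l+1=v^2,\ a_l-1=lu^2$ with $u,v$ odd. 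Hence $\pi:=v+u\sqrt l$ has $N\pi=2$ and $\pi^2=2\varepsilon_l$, so $\nu_2(t_2(l))=\nu_2(v^2-1)-1$ and in particular $8\mid a_l$ at once. The analogous study of $\varepsilon_{2l}$ (distinguishing $h(2l)$ odd or even, and describing the prime above $2$ in $\BQ(\sqrt{2l})$, where $2\in N$ but $-2\notin N$) expresses $\nu_2(h(2l))+\nu_2(b_{2l})$ through the same solution of $v^2-lu^2=2$, giving $\nu_2(t_2(2l))=\nu_2(t_2(l))-1$.

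\emph{Linking to the imaginary field.} For the middle equality I would compare this unit data with the $2$-part of $h(-2l)$, which is computed by Leonard--Williams and recorded in Proposition~\ref{prop:l78}(2)(ii). The natural bridge is a Kuroda-type class number formula: applied to the imaginary biquadratic field $\BQ(\sqrt 2,\sqrt{-l})$ (with quadratic subfields $\BQ(\sqrt 2)$, $\BQ(\sqrt{-l})$ of odd class number, and $\BQ(\sqrt{-2l})$) it relates $h_2(-2l)$ to a unit index in $\BQ(\sqrt 2,\sqrt l)$, and the latter is pinned down by the identity $\sqrt{\varepsilon_l}=\pi/\sqrt 2\in\BQ(\sqrt 2,\sqrt l)=\BQ(\sqrt 2,\sqrt{2l})$, which simultaneously couples $\varepsilon_l$ and $\varepsilon_{2l}$. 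This should yield $\nu_2(h_2(-2l))=\nu_2(t_2(l))$. Alternatively one can compare the R\'edei-type matrices of Section~\ref{sec:proof-Redei} governing the $2$- and $4$-ranks of these groups.

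\emph{Main obstacle.} The difficulty is the synchronisation of the three valuations, concentrated in the borderline case $l\equiv 15\bmod 16$ where one must separate $t_2(l)=8$ from $t_2(l)\geq 16$. This requires tracking $u,v$ — equivalently $a_{2l},b_{2l}$, equivalently the solution of $l=X^2-2Y^2$ — modulo $16$ and $32$, controlling the sign/unit ambiguity in the choices of $\pi$ and of the generator above $2$ in $\BQ(\sqrt{2l})$ so that the relevant quartic and octic residue symbols are well defined, and identifying the resulting symbol with $(-1)^{(l+1)/16}\bigl(\tfrac{2u}{v}\bigr)$. Everything else then follows routinely from Lemma~\ref{lem:units}, Lemma~\ref{lem:hasse}, Coates' formula and Proposition~\ref{prop:l78}.
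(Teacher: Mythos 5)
Your reduction is to a false statement. The chain $\nu_2(t_2(l))=\nu_2(h_2(-2l))=\nu_2(t_2(2l))+1$ does not hold for all primes $l\equiv 7\bmod 8$: the remark following Theorem~\ref{thm:l78} gives $l=223$, where $t_2(l)=16$, $t_2(2l)=128$ and $h_2(-2l)=32$, so all three valuations differ. What the theorem asserts (and all that is true) is that the three invariants sit at the bottom levels simultaneously --- $(4,2,4)$ together, $(8,4,8)$ together, and otherwise all $\geq 16$ --- from which the congruence mod $16$ follows, as in your final formal step. Consequently the two bridges you propose cannot be established: the Kuroda-type argument claimed to ``yield $\nu_2(h_2(-2l))=\nu_2(t_2(l))$'' and the analysis of $\varepsilon_{2l}$ claimed to give $\nu_2(t_2(2l))=\nu_2(t_2(l))-1$ are both contradicted by $l=223$. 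The paper's actual route for $2l$ is the Leonard--Williams congruence $h(-2l)\equiv b_{2l}\bmod 16$ together with $h(2l)$ odd and Lemma~\ref{lem:units}(2); this gives exactly the low-level agreement that survives, and nothing more.

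The genuinely hard content is the criterion separating $t_2(l)=8$ from $t_2(l)\geq 16$ when $l\equiv 15\bmod{16}$ and its identification with the Leonard--Williams criterion $(-1)^{(l+1)/16}\leg{2u}{v}=-1$ for $h_2(-2l)=8$ (note that the $(u,v)$ in the statement solves $l=u^2-2v^2$ in $\BZ[\sqrt2]$, not your Pell solution in $\BZ[\sqrt l]$; matching the two is the whole point). This is precisely what you defer under ``Main obstacle'' and never carry out. Up to that point your setup agrees with the paper: $N\varepsilon_l=1$ and $-2\notin N(\BQ(\sqrt l))$ force a norm-$2$ element $c+d\sqrt l$ with $(c+d\sqrt l)^2=2\varepsilon_l$, whence $a_l=ld^2+1$, and since every prime factor of $d$ is $\equiv\pm1\bmod 8$ one gets $d^2\equiv 1\bmod{16}$, hence $8\mid a_l$, $4\mid t_2(l)$, and $t_2(l)=4\Leftrightarrow l\equiv 7\bmod{16}$ via Lemma~\ref{lem:units}(1). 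But the remaining step in the paper is a concrete computation you have not supplied: one of $u\pm\sqrt2 v$ divides $c+\sqrt2$ in the Euclidean ring $\BZ[\sqrt2]$, the quotient is a totally positive square $(t-s\sqrt2)^2$, comparing coefficients gives $1=(t^2+2s^2)v-2tsu$, and a quadratic-reciprocity evaluation of $\leg{2u}{v}$ then determines $d\bmod 16$, i.e.\ whether $\nu_2(ld^2+1)=4$ or $\geq 5$. Without this (or a substitute), part (2) --- and therefore the concluding congruence --- is not proved.
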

\begin{remark}
	However, in general the three numbers $t_2(l)$, $2t_2(2l)$ and $h_2(-2l)$ are not equal if one (hence all) of them $\geq 16$. For example, let $l=223$, then $t_2(l)=16$, $2t_2(2l)=256$ and  $h_2(-2l)=32$.
\end{remark}
\begin{proof} (1) We first study $t_2(l)$.
As shown in the proof of Lemma~\ref{lem:units}, $\varepsilon_l
= a_l+b_l\sqrt{l}= \frac{1}{2}(c+d\sqrt{l})^2$ where $c,d$ are odd integers and $N(c+d\sqrt{l})=c^2-d^2l=2$. In particular, $c^2\equiv 2\pmod d$. It follows that every prime factor of $d$ is congruent to $\pm 1\pmod 8$. Hence $d^2\equiv 1\pmod {16}$ and $\nu_2(a_l) = \nu_2(1+d^2 l )$. For $l\equiv 7\pmod{8}$, $ \nu_2(1+d^2 l)\geq 3$, with equality if and only if $l\equiv 7\pmod{16}$. By Lemma~\ref{lem:units}(1), $4\mid
t_2(l)=2^{\nu_2(ld^2+1)-1}$, and $t_2(l)=4$ if and only if  $l\equiv 7\pmod {16}$.

Note that the Jacobi symbol $\leg{2u}{v}$ is independent on the choices of $u$ and $v$ (see \cite[Lemma 4.1]{LX20}). By the results of Leonard-Williams (Proposition~\ref{prop:l78}(2)), we are left to show that if $l\equiv 15\pmod{16}$, then
\[ \nu_2(ld^2+1)=4  \Longleftrightarrow \ (-1)^{\frac{l+1}{16}}\leg{2u}{v}=-1. \]

Since $l=(u+ \sqrt{2}v)(u-\sqrt{2}v)\mid ld^2=(c+\sqrt{2})(c-\sqrt{2})$, one of the prime elements $u\pm \sqrt{2}v$ must divides $c+\sqrt{2}$ in the Euclidean domain  $\BZ[\sqrt{2}]$.

(i) Suppose  $\frac{c+\sqrt{2}}{u+\sqrt{2}v}\in \BZ[\sqrt{2}]$. Note that $c+ \sqrt{2}$ and $c- \sqrt{2}$ are coprime in $\BZ[\sqrt{2}]$, the integers  $\frac{c+\sqrt{2}}{u+\sqrt{2}v}$ and $\frac{c-\sqrt{2}}{u-\sqrt{2}v}$ are coprime, but their product is $d^2$ and  $\BZ[\sqrt{2}]$ has class number $1$,  hence there exist $s,t\in \BZ$  and $\varepsilon \in \{1, 1+\sqrt{2}\}$ such that
\[\frac{c+\sqrt{2}}{u+\sqrt{2}v}=\varepsilon (t-s\sqrt{2})^2. \]
Since the left hand side is totally positive, we must have $\varepsilon=1$.  Comparing the coefficients of $\sqrt{2}$ gives \begin{equation}\label{key}
1=(t^2+2s^2)v-2tsu.
\end{equation}
Note that $ts$ must be positive. We  may assume that $t,s$ are both positive. Since $l=u^2-2v^2\equiv -1\pmod {16}$,  both $u$ and $v$ are odd. In fact, $v\equiv 1\pmod 4$ by \eqref{key}. Hence
$\leg{2u}{v}=\leg{-st}{v} =\leg{t}{v}  \leg{s}{v}$. Note that $d,t$ are odd. By quadratic reciprocity law, $\leg{t}{v}=\leg{v}{t}=\leg{2}{t}$. The last equality follows from \eqref{key}.  Write $s=2^r s_0$ with $2\nmid s_0$. If $s \equiv 2\pmod4$,  then $v \equiv 5 \pmod8$ and  $\leg{s}{v}=\leg{2\cdot s_0}{v}=-\leg{v}{s_0}$.  If $s \equiv 0 \pmod4$, then $t^2\equiv 1\pmod8$ and $v \equiv 1 \pmod8$. So $\leg{s}{v}=\leg{s_0}{v}=\leg{v}{s_0}=1$. If
$s \equiv \pm 1\pmod4$, then $\leg{s}{v}=\leg{v}{s}=1$. Hence
\[\leg{s}{v}=\begin{cases}
-1, & \mbox{if } s \equiv 2 \pmod4, \\
1, & \mbox{otherwise}.
\end{cases}\]
Therefore  $\leg{2u}{v}=1$  if and only if $\pm d=t^2-2s^2 \equiv \pm1 \pmod {16}$. This implies that  $16 \parallel ld^2+1$ if and only if $ (-1)^{\frac{l+1}{16}}\leg{2u}{v}=-1$.

(ii) Suppose  $\frac{c+\sqrt{2}}{u-\sqrt{2}v}\in \BZ[\sqrt{2}]$. By similar argument,  there exist two positive integers $t,s$ such that
\[1=2stu-(t^2+2s^2)v. \]
For this equation,  $v\equiv 3\pmod 4$ and  $\leg{2u}{v}=\leg{t}{v} \leg{s}{v}$. One can repeat the argument above to obtain that  $\leg{t}{v}=\leg{2}{t}$ and that  \[\leg{s}{v}=\begin{cases}
-1, & \mbox{if } s \equiv 2 \pmod4, \\
1, & \mbox{otherwise}.
\end{cases}\]
Again this implies that  $16 \parallel ld^2+1$ if and only if $ (-1)^{\frac{l+1}{16}}\leg{2u}{v}=-1$.

(2) If $l\equiv 7\pmod8$, then $h(2l)$ is odd. By Lemma~\ref{lem:units}(2), $\nu_2(t_2(2l))=\nu_2(b_{2l})-1$. According to the last paragraph in \cite[\S~3]{LW82}, we have
$h(-2l)\equiv b_{2l} \pmod {16}$. Then $t_2(2l)=\frac{h_2(-2l)}{2}=\frac{t_2(l)}{2}$ if $t_{2l}=2$ or $4$. We just need to apply Proposition~\ref{prop:l78}.
\end{proof}

\begin{prop}\label{prop:special-fam}
Assume $l\equiv 1\pmod 8$ is a prime.

$(1)$ Write $l=2g^2-h^2$ with $g,h\in \BZ_{>0}$. Then
 \begin{align}  \label{eq:special1} &t_2(l)=2 \Longleftrightarrow h_2(-l)=4  \Longleftrightarrow g\equiv 3\pmod{4}; \\ \label{eq:special2}
	& t_2(l)=4  \Longleftrightarrow \begin{cases}
		h_2(-l)=8  & \mbox{if } l \equiv 1 \pmod {16} \\
		h_2(-l)\geq 16  & \mbox{if } l \equiv 9 \pmod {16}
	\end{cases}\ \Longleftrightarrow (-1)^{\frac{l-1}{8}} \leg{2h}{g}\leg{g}{l}_4=-1.
\end{align}

$(2)$ Write $l=u^2-2v^2$ with $u, v\in \BZ_{>0}$ and $u \equiv 1 \pmod4$. Then
 \begin{align}  \label{eq:special3} &t_2(2l)=2\Longleftrightarrow \leg{u}{l}=-1,\\
 	 \label{eq:special4} &t_2(2l)=4\Longleftrightarrow (-1)^{\frac{l-1}{8}}\leg{u}{l}_4=-1.
 	 \end{align}
\end{prop}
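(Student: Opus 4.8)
The plan is to work entirely on the real side, using Coates' order formula together with Lemma~\ref{lem:units} to convert the statements about $t_2(l)$ and $t_2(2l)$ into $2$-adic statements about the fundamental units $\varepsilon_l,\varepsilon_{2l}$, and then to match the resulting conditions against the $2$-class number criteria of Proposition~\ref{prop:l78}. Throughout $l\equiv 1\bmod 8$, so $\BQ(\sqrt l)$ has odd class number by genus theory, and by the $2$-rank formula \eqref{eq:2-rankT} both $\CT_2(l)$ and $\CT_2(2l)$ are nontrivial cyclic $2$-groups.

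For (1): since $2\nmid h(l)$, Lemma~\ref{lem:units}(1) gives $\nu_2(t_2(l))=\nu_2(a_l)-1$, so that $t_2(l)=2\iff\nu_2(a_l)=2$ and $t_2(l)=4\iff\nu_2(a_l)=3$. The core of the argument is to read off $\nu_2(a_l)$ from the two Diophantine identities $a_l^2-lb_l^2=-1$ (with $4\mid a_l$ and $b_l$ odd, as in the proof of Lemma~\ref{lem:units}) and $l=2g^2-h^2$ (with $g,h$ odd). The ideal $(h+\sqrt l)$ of $\CO_F$, $F=\BQ(\sqrt l)$, has norm $-2g^2$ and, exactly as in Proposition~\ref{prop:selmer}, factors as $(h+\sqrt l)=\fp\fa^2$ for a prime $\fp\mid 2$; comparing this with $\varepsilon_l$ (and using that $\sqrt l\in\BQ_2$) should let one express $\nu_2(a_l)$, modulo higher powers of $2$, in terms of $g$ and $h$. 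I expect the outcome $\nu_2(a_l)=2\iff g\equiv 3\bmod 4$, which is exactly $h_2(-l)=4$ by Proposition~\ref{prop:l78}(1); and $\nu_2(a_l)=3\iff(-1)^{(l-1)/8}\leg{2h}{g}\leg{g}{l}_4=-1$, which by Proposition~\ref{prop:l78}(1) translates to $h_2(-l)=8$ when $l\equiv 1\bmod 16$ (there $(-1)^{(l-1)/8}=1$) and to $h_2(-l)\ge 16$ when $l\equiv 9\bmod 16$ (there the sign is $-1$, negating the ``$h_2(-l)=8$'' criterion). This yields \eqref{eq:special1}--\eqref{eq:special2}.

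For (2): Lemma~\ref{lem:units}(2) gives $\nu_2(t_2(2l))=\nu_2(h(2l))+\nu_2(b_{2l})-1$; since the discriminant $8l$ has exactly two prime divisors, $\Cl_2(\BQ(\sqrt{2l}))$ is cyclic, and the task is to pin down $\nu_2(h(2l))$ (together with $\nu_2(b_{2l})$ in the case $N\varepsilon_{2l}=1$). I would do this by a R\'edei/ambiguous-ideal computation in $\BQ(\sqrt{2l})$ anchored to the representation $l=u^2-2v^2$ with $u\equiv1\bmod 4$ (the same data as in Proposition~\ref{prop:l78}(2)(i)): factoring the norm relation for $\varepsilon_{2l}$ into coprime factors and reducing to auxiliary Pell equations in $\BZ[\sqrt 2]$, in the spirit of the proof of Theorem~\ref{thm:l78}(2) and of Leonard--Williams. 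The target is $t_2(2l)=2\iff\leg{u}{l}=-1$ and $t_2(2l)=4\iff(-1)^{(l-1)/8}\leg{u}{l}_4=-1$, i.e.\ \eqref{eq:special3}--\eqref{eq:special4}, which can again be cross-checked against the $h_2(-2l)$ criteria of Proposition~\ref{prop:l78}(2)(i).

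The main obstacle in both parts is not the reduction (Coates plus Lemma~\ref{lem:units} is routine) but the explicit $2$-adic bookkeeping: converting ``$\nu_2(a_l)\in\{2,3\}$'' (resp.\ the $2$-part of $h(2l)$ together with $\nu_2(b_{2l})$) into the clean quadratic- and quartic-residue conditions in terms of $(g,h)$ (resp.\ $(u,v)$). This demands a careful choice of auxiliary elements and repeated use of quadratic reciprocity and the product formula for Hilbert symbols; as in the proof of Theorem~\ref{thm:l78}, this is where sign and normalization errors are easiest to make, and keeping straight the case divisions by $l\bmod 16$ (and, for $2l$, by $N\varepsilon_{2l}=\pm1$) is the delicate point.
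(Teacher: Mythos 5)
Your reduction is the same as the paper's: by Coates' formula and Lemma~\ref{lem:units}, $t_2(l)=2^{\nu_2(a_l)-1}$ (since $2\nmid h(l)$) and $\nu_2(t_2(2l))=\nu_2(h^{+}(2l))+\nu_2(S)-2$ for the norm-one unit $R+S\sqrt{2l}$, so everything hinges on pinning down $a_l$ (resp.\ $S\cdot h^{+}(2l)$) modulo $16$ in terms of the representations $l=2g^2-h^2$ and $l=u^2-2v^2$. But that is exactly the step you do not carry out: your argument for \eqref{eq:special1}--\eqref{eq:special2} is ``I expect the outcome,'' and for \eqref{eq:special3}--\eqref{eq:special4} it is ``the target is.'' The sketched mechanism --- factor $(h+\sqrt{l})=\fp\fa^2$ and ``compare with $\varepsilon_l$'' --- gives no actual link between the Pell solution $a_l^2-lb_l^2=-1$ and the pair $(g,h)$ at the level of residues mod $16$. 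The analogous explicit computation in Theorem~\ref{thm:l78} works only because for $l\equiv 7\bmod 8$ one has $\varepsilon_l=\tfrac12(c+d\sqrt l)^2$ with $c^2-ld^2=2$, which couples directly to factorizations in $\BZ[\sqrt2]$; for $l\equiv 1\bmod 8$ (norm $-1$) no such identity is available, and establishing the required congruence is a genuinely nontrivial theorem, not bookkeeping.

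The paper closes this gap not by a direct $2$-adic computation but by quoting known congruences: Williams' result \cite{Wil81} that $a_l\equiv h(-l)+l-1\bmod{16}$ when $h_2(-l)\geq 8$ (and $t_2(l)=2$ when $h_2(-l)=4$), and for part (2) the Kaplan--Williams congruence \cite{KW82} $\tfrac{S\,h^{+}(2l)}{2}\equiv 1-l-h(-2l)\bmod{16}$, which are then combined with the Leonard--Williams criteria recorded in Proposition~\ref{prop:l78} to produce the symbol conditions $(-1)^{\frac{l-1}{8}}\leg{2h}{g}\leg{g}{l}_4$ and $(-1)^{\frac{l-1}{8}}\leg{u}{l}_4$. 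So as written your proposal has a genuine gap: the central equivalences are asserted, and the proposed direct route would in effect require reproving these classical congruences, with no indication of how the quartic symbols would emerge. To repair it you should either import \cite{Wil81}, \cite{KW82} (and \cite{LW82}) as the paper does, or supply the full unit-versus-$(g,h)$ (resp.\ $(u,v)$) computation, which is substantially harder than the $l\equiv 7\bmod 8$ case you model it on.
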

\begin{proof} (1) For $l\equiv 1\pmod 8$, Williams \cite{Wil81} proved that
		\[	a_l\equiv	\begin{cases}  h(-l)+l-1 \pmod {16},  & \text{ if } h_2(-l)\geq 8; \\   4(h(l)-1)+l-1-h(-l) \pmod {16}, & \text{ if } h_2(-l)=4.\end{cases} \]
Hence	we have
	\begin{equation}
		\begin{cases}2t_2(l) \equiv  h(-l)+l-1 \pmod {16}, & \text{ if } h_2(-l)\geq 8;\\
			t_2(l)=2, & \text{ if } h_2(-l)=4. \end{cases}
	\end{equation}
	Applying Coates' formula \eqref{eq:coates}, Lemma~\ref{lem:units} and Proposition~\ref{prop:l78}(1), we get the result.

(2) It follows from \eqref{eq:coates} that
$t_2(2l)$ is equal to $\log_2(\varepsilon_{2l}) h(2l)/{(2\sqrt{2})}$ up to a $2$-adic unit. Denote by $R+S\sqrt{2l}$ the fundamental unit of norm $1$ of $\BQ(\sqrt{2l})$ and by $h^{+}(2l)$ the narrow class number of $\BQ(\sqrt{2l})$. Then, $R+S\sqrt{2l}=\varepsilon_{2l}$ and $h^{+}(2l)=2h(2l)$ if $N(\varepsilon_{2l})=1$; $R+S\sqrt{2l}=\varepsilon_{2l}^2$ and $h^{+}(2l)=h(2l)$ if $N(\varepsilon_{2l})=-1$. Thus, by Lemma~\ref{lem:units}(2), we have
	\[\nu_2(t_2(2l))=\nu_2(h^{+}(2l))+\nu_2(S)-2. \]
	The main theorem in \cite{KW82} tells us that
	\[\frac{S\cdot h^{+}(2l)}{2} \equiv 1-l-h(-2l) \pmod {16}. \]
	Then all results here directly follow the discussion in \cite[\S~2]{LW82}.
\end{proof}

Now we can prove the density result about $\CT_2(l)$ and $\CT_2(2l)$:

\begin{proof}[Proof of Theorem~\ref{thm:i=0,1}]
  (1) We first show \eqref{eq:densityaab}. In the case $e=0$, then $l\equiv 1\pmod 8$.   Stevenhagen \cite[Theorem 1]{Ste93} proved that $h_2(-l)\geq 8$ if and only if $l$ splits completely in $\BQ(\zeta_8,\sqrt{1+i})$. Then by Chebotarev's density theorem,
   \[ \lim\limits_{x \rightarrow \infty}\frac{\# \{ l \leq x:  l \equiv 1 \pmod{8},\  h_2(-l)= 4\}}{\#\{\ l \leq x: l \equiv 1\pmod{8}\}}= \lim\limits_{x \rightarrow \infty}\frac{\# \{ l \leq x:  l \equiv 1 \pmod{8},\  h_2(-l)\geq 8\}}{\#\{\ l \leq x: l \equiv 1 \pmod{8}\}}=
  \frac{1}{2}. \]
  By \eqref{eq:special1} in Proposition~\ref{prop:special-fam}, the case $i=0$ follows.

  Recently, Koymans (\cite[Theorem 1.1]{Koy20}) proved that
  \[\lim\limits_{x \rightarrow \infty}\frac{\# \{ l \leq x:  l \equiv 1 \pmod8  \text{ and }  h_2(-l)=8 \}}{\#\{\ l \leq x: l \equiv 1 \pmod8\}}=\frac{1}{4}.\]
  As a corollary of \cite[Theorem 1]{Ste93}, we have that $l\equiv 9\pmod{16}$ such that $h_2(-l)\geq 8$ if and only if the Frobenius of $l$ in $\Gal(\BQ(\zeta_{16},\sqrt{1+i})/\BQ$ acts trivially in $\BQ(\zeta_8,\sqrt{1+i})$ and  maps $\zeta_{16}$ to $-\zeta_{16}$. Hence
   \[ \lim\limits_{x \rightarrow \infty}\frac{\# \{ l \leq x:  l \equiv 9 \pmod{16},\  h_2(-l)\geq 8\}}{\#\{\ l \leq x: l \equiv 9 \pmod {16}\}}= \lim\limits_{x \rightarrow \infty}\frac{\# \{ l \leq x:  l \equiv 1 \pmod{16},\  h_2(-l)\geq 8\}}{\#\{\ l \leq x: l \equiv 1 \pmod {16}\}}=
  \frac{1}{2}. \]
 If we can show
  \begin{equation} \label{eq:l18density}
  	\lim\limits_{x \rightarrow \infty}\frac{\# \{ l \leq x:  l \equiv 9 \pmod{16},\    h_2(-l)=8\}}{\#\{\ l \leq x: l \equiv 9 \pmod{16}\}} =\lim\limits_{x \rightarrow \infty}\frac{\# \{ l \leq x:  l \equiv 9 \pmod{16},\  h_2(-l)\geq 16\}}{\#\{\ l \leq x: l \equiv 9 \pmod {16}\}}=
  	\frac{1}{4},
  \end{equation}
  then
  \[   \lim\limits_{x \rightarrow \infty}\frac{\# \{ l \leq x:  l \equiv 1 \pmod{16}  \text{ and }  h_2(-l)\geq 16 \}}{\#\{\ l \leq x: l \equiv 1 \pmod{16}\}}= \lim\limits_{x \rightarrow \infty}\frac{\# \{ l \leq x:  l \equiv 1 \pmod{16}  \text{ and }  h_2(-l)=8 \}}{\#\{\ l \leq x: l \equiv 1 \pmod{16}\}}=  \frac{1}{4}. \]
Hence the $i=1$  case follows from \eqref{eq:special2}. It suffices to show \eqref{eq:l18density}.

Let
 \[  e_l = \begin{cases}
1,   & \text{ if } h_2(-l)\geq 16,\\
-1,  &  \text{ if } h_2(-l)=8,\\
0,  & \text{ if } h_2(-l)=4.
\end{cases}   \]
By\cite[Theorem 1.2]{Koy20}, we have
  \begin{equation} \label{eq: Koy1}\sum_{l\leq x,\ l\equiv 1\pmod8}e_l \ll  x/\exp((\log x)^{0.1}). \end{equation}
Replacing the spin symbol $[w]$ in \cite[Lemma 4.1, 4.2]{Koy20} by the twisted symbol $[w]':=[w]\cdot \lambda(w)$ for all totally positive elements $w$ of $\BZ[\zeta_8]$, where  $\lambda(w)=(-1)^{\frac{Nw-1}{8}}$ if $Nw \equiv 1\pmod8$ and 1 otherwise,  one    follows the argument  there and obtains
  \begin{equation}\label{eq: Koy2}
  \sum_{l\leq x,\ l\equiv 1\pmod8}(-1)^{\frac{l-1}{8}}e_l \ll  x/\exp((\log x)^{0.1}). \end{equation}
Thus
  \[\sum_{l\leq x,\, l\equiv 1\pmod8}\left(e_l-(-1)^{\frac{l-1}{8}}e_l\right) =2\sum_{l\leq x,\, l\equiv 9\pmod {16}}\left(1_{16|h(-l)}- 1_{8||h(-l)}\right)\ll  x/\exp((\log x)^{0.1}).\]
Note that as $x\rightarrow +\infty$, $\log x=o(\exp((\log x)^{0.1})$, by Dirichlet's density theorem, then
 \[  \#\{l\leq x,\ l\equiv 9\pmod{16},\ h_2(-l)=8 \}\sim \#\{l\leq x,\ l\equiv 9\pmod{16},\ h_2(-l)\geq 16\}\sim \frac{x}{32\log x}. \]
Hence we  have   \eqref{eq:l18density}.

  In the case $e=1$, $l\equiv 7\pmod 8$.  By Theorem~\ref{thm:l78}, the case $i=0$ follows from the fact that $t_2(l)=4$ if and only if  $l\equiv 7\pmod{16}$, and the case $i=1$ follows from the following result of Milovic \cite[Theorem 1]{Mil17} on $h_2(-2l)$ that
  \[\lim\limits_{x \rightarrow \infty}\frac{\# \{\ l \leq x: l \equiv -1 \pmod8,\ h_2(-2l)=8 \}}{\#\{\ l \leq x: l \equiv -1 \pmod8\}}=\frac{1}{4}. \]

(2) Case $7\pmod 8 $ for \eqref{eq:density2aaa} follows from (1) and Theorem~\ref{thm:l78}, and case $1\pmod{8}$ follows from Proposition~\ref{prop:special-fam}(2) and \cite[Theorem 1]{KM19} with similar arguments for $t_2(l)$; we omit the details.
\end{proof}

\begin{remk}\label{rmk:finer} We actually proved that for $i=1$ and $2$,
\[	\lim\limits_{x \rightarrow \infty}\frac{\#\{l \leq x: l \equiv 1 \pmod {16},\ t_2(l)= 2^{i}\}}{\#\{l \leq x: l \equiv 1 \pmod {16}\}}=
				\lim\limits_{x \rightarrow \infty}\frac{\#\{l \leq x: l \equiv 9 \pmod {16},\ t_2(l)= 2^{i}\}}{\#\{l \leq x: l \equiv 9 \pmod {16}\}}= \frac{1}{2^{i}}.
\]
\end{remk}

\section{Distribution Conjectures for $\CT_p$-groups of quadratic fields}

\subsection{Distribution conjecture of $\CT_p$ in the full family} We first propose a distribution conjecture on the group structure of $\CT_p(F)$ when $F$ varies in the family of all imaginary (resp. real) quadratic fields $\CF_{\mathrm{im}}$ (resp. $\CF_{\mathrm{re}}$).

For $5\leq p\leq 47$, numerical data presented in \cite[\S 5.2]{PV15} suggested that of all real quadratic fields $\BQ(\sqrt{m})$ such that  $m\leq 10^9$ is squarefree, the proportion of fields with trivial $\CT_p$-groups (so-called $p$-rational field) is close to $\eta_\infty(p)$. It was mentioned there that the authors also considered the distribution about the group structures of $\CT_p$-groups, however, we did not find any further statement and subsequent studies  in the literature.

Based on Theorem~\ref{thm:4-rank-density} and numerical data in the appendix, we propose the following conjecture:

\begin{conj}\label{conj:CL-full-family}
	Let $p$ be a prime. Let $\CF_{\mathrm{im}}$ (resp. $\CF_{\mathrm{re}}$) be the family of all imaginary (resp. real) quadratic fields. For each finite abelian $p$-group $G$, one has
	\begin{align} \label{eq:conj11}
		&\lim\limits_{x\rightarrow \infty} \frac{ \# \{ F\in \CF_{\mathrm{im}}\mid  -D_F \leq x,\ 6\CT_p(F) \cong G\}}{{\#\{ F\in \CF_{\mathrm{im}}:   -D_F \leq x\}}} = \frac{\eta_{\infty}(p)/ \eta_1(p)}{\#G  \cdot\#\Aut(G) };\\
\label{eq:conj12}
		& \lim\limits_{x\rightarrow \infty} \frac{ \# \{ F\in \CF_{\mathrm{re}}\mid  D_F \leq x,\ 6\CT_p(F) \cong G\}}{{\#\{ F\in \CF_{\mathrm{re}}:   D_F \leq x\}}} = \frac{\eta_{\infty}(p)}{ \#\Aut(G) }.
	\end{align}
Here $D_F$ is the discriminant of $F$, and we recall that $\eta_s(q):=\prod\limits_{i=1}^{s}(1-q^{-i})$ for  $s\in \BZ_{>0}\cup \{\infty\}$ and $q>1$.
\end{conj}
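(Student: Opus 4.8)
This is a Cohen--Lenstra style prediction rather than a theorem within current reach, so the plan is threefold: first, recast it as the claim that an explicit probabilistic model governs $\CT_p$; second, check the model outputs precisely \eqref{eq:conj11} and \eqref{eq:conj12}; third, prove as much of the model as the moment method and current analytic number theory permit, which for now means the $p=2$, small $2^k$-rank range already settled in this paper. I would fix a prime $p$ and set $\Gamma_p(F):=p^{\,v_p(6)}\CT_p(F)$, so that $\Gamma_p(F)=\CT_p(F)$ for $p\ge 5$, while for $p\in\{2,3\}$ one has divided out exactly the layer of the $p$-rank that is not expected to behave randomly --- for $p=2$ the part controlled by the genus-type formula \eqref{eq:2-rank} (the splitting of $2$ and the narrow $S$-class $2$-rank). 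The model is then: as $F$ runs over $\CF_{\mathrm{im}}$ (resp. $\CF_{\mathrm{re}}$) ordered by $|D_F|$, the finite abelian $p$-group $\Gamma_p(F)$ is distributed like $\Coker(A_n)$ for $A_n$ a Haar-random matrix in $M_{n\times(n+u)}(\BZ_p)$ as $n\to\infty$, with $u=r_2(F)$; the choice $u=1$ in the imaginary case is dictated by the $\BZ_p$-rank $r_2+1=2$ of $\Gal(M/F)$ and is consistent with the known behaviour of tame kernels of real quadratic fields, while $u=0$ in the real case recovers the Cohen--Lenstra law for imaginary quadratic class groups. By the Friedman--Washington cokernel computation, the probability that $\Coker(A_n)\cong G$ converges to $\frac{1}{\#G^{\,u}\,\#\Aut(G)}\prod_{k>u}(1-p^{-k})$, which is $\frac{\eta_\infty(p)/\eta_1(p)}{\#G\,\#\Aut(G)}$ for $u=1$ and $\frac{\eta_\infty(p)}{\#\Aut(G)}$ for $u=0$: exactly the right-hand sides above.

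\textbf{From moments to the distribution; what is already proved.} The bridge from the model to a genuine density statement is the moment method (Wood; Sawin--Wood): the model predicts $\mathbf{E}_F\!\left[\#\mathrm{Sur}\bigl(\Gamma_p(F),G\bigr)\right]\to \#G^{-r_2}$ for every finite abelian $p$-group $G$, these moments grow slowly enough to pin down a unique distribution, and that distribution is the one above. So the conjecture reduces to establishing, family by family, the asymptotic $\mathbf{E}_F[\#\mathrm{Sur}(\Gamma_p(F),G)]\to \#G^{-r_2}$, which one would attack by expressing $\#\mathrm{Sur}(\Gamma_p(F),G)$ as a weighted count of $G$-extensions of $F$ unramified outside $p$ with prescribed decomposition behaviour, averaging over $F$ by Chebotarev and character sums, and controlling the error. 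For $p=2$ the first nontrivial instances are already in hand: since $\rk_4\CT_2(\BQ(\sqrt{-m}))=\rk_2\Gamma_2(\BQ(\sqrt{-m}))$, one checks that the density $d^T_r$ of \eqref{eq:4-rank-density} equals $\sum_{G:\,\rk_2 G=r}\frac{\eta_\infty(2)/\eta_1(2)}{\#G\,\#\Aut(G)}$, i.e. the distribution of $\rk_4\CT_2$ predicted by \eqref{eq:conj11}, and Theorem~\ref{thm:4-rank-density} establishes it --- via the reduction to $\rk_4 K_2\CO_{\tF}$ on a density-one subfamily (Theorem~\ref{thm:compareTK}) together with Fouvry--Kl\"uners \cite{FK07} and Smith \cite{Smi17}. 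Theorem~\ref{thm:i=0,1} and the results of Section~\ref{sec:special} give the parallel confirmation along the thin prime subfamilies.

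\textbf{The main obstacle.} The essential difficulty is the one that keeps the odd-$p$ Cohen--Lenstra conjectures for class groups open. For $p\ge 5$ I do not expect an unconditional proof: even the first moment $\mathbf{E}_F[\#\mathrm{Sur}(\CT_p(F),\BZ/p\BZ)]$ --- equivalently the average of $\rk_p\CT_p(F)$ over the family --- is presently out of reach. For $p=2$ the realistic route is to transplant Smith's induction on the $2^k$-rank from class groups to $\CT_2$, which breaks into step one, an explicit description (generalizing Theorem~\ref{thm:4-rankT}) of the higher $\CT_2$-R\'edei symbols and of the auxiliary number fields that govern them; and step two, a joint equidistribution statement for these symbols, uniform over the family and with the $p=2$ anomaly correctly absorbed into $\Gamma_2$. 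I expect step two to be the principal obstacle --- substantial, but, given that the $4$-rank case is handled here, plausibly tractable --- whereas the case of odd $p$ should remain conjectural for the foreseeable future.
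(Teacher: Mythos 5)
This statement is a conjecture which the paper does not (and cannot yet) prove; its only stated support is the reversed Cohen--Lenstra weight heuristic recorded in the remark following it, the $4$-rank density result of Theorem~\ref{thm:4-rank-density}, and the numerical tables in the appendix. Your proposal is in substance the same justification made more explicit and is correct as far as it goes: the $u=r_2$ random-matrix (Friedman--Washington) model does reproduce exactly the right-hand sides of \eqref{eq:conj11} and \eqref{eq:conj12}, your consistency check that $d^T_r$ from \eqref{eq:4-rank-density} equals $\sum_{G:\,\rk_2 G=r}\frac{\eta_\infty(2)/\eta_1(2)}{\#G\cdot\#\Aut(G)}$ (i.e.\ the predicted distribution of $\rk_2(2\CT_2)=\rk_4(\CT_2)$ in the imaginary family) is a standard identity and holds, and your moment-method/Smith-style roadmap is an accurate account of why only the $p=2$, $4$-rank layer is currently within reach.
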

\begin{remark}
(1) For $p\geq 5$, we have $6\CT_p(F) \cong \CT_p(F)$ and hence the factor $6$ can be removed from the statement of our conjecture. For $p=2$ or $3$, we have $6\CT_p(F) = p\CT_p(F)$. For $p=5$ and $7$, we have carried out numerical computation of $\CT_p(F)$ with $|D_F|\leq 5\times 10^7$; see Tables~\ref{table:t5real} - \ref{table:t7ima}, which give strong evidence of Conjecture~\ref{conj:CL-full-family} in these cases.
	
(2) In the bad primes $2$ and $3$ case, when the bound is $5\times 10^7$, the distributions of $2\CT_2$ and $3\CT_3$ are actually not quite good based on our computation, but this is expected just like the analogue phenomenon for the distributions of narrow $2$-class groups and tame kernels of quadratic fields: the bound is not big enough. We gain confidence from recent breakthrough of Smith\cite{Smi17} on the distribution of narrow $2$-class groups of quadratic fields, as well as the $4$-rank density formula for $\CT_2$ of imaginary quadratic fields we just proved here.

(3) If using the setting of local Cohen-Lenstra Heuristic,   the weight function for $p$-class groups is $\omega_0$ for imaginary quadratic fields and $\omega_1$ for real ones where
 \begin{equation}\label{eq:wt}
  \omega_i(G)=\frac{1}{(\# G)^{i}\cdot \#\Aut(G)},
 \end{equation}
 the weight functions for $\CT_p$-groups are exactly the reverse order.

(4) For more general conjectures on distributions of $\CT_p$-groups of quadratic fields, which are also in the spirit of the Cohen-Lenstra heuristics, see \cite{LOX21}.
\end{remark}

\subsection{Distribution conjecture of $\CT_2$ in sub-families}

\begin{conj}\label{conj:density1}
	Assume all $l$ appeared below are primes. For each integer $i\geq 0$ and $e\in \{0,1\}$, then
	\begin{align} \label{eq:conj21}
		&\lim\limits_{x \rightarrow \infty}\frac{\#\{l \leq x: l \equiv 1 \pmod {16}, t_2(-l)=2^{i+3}\}}{\#\{l \leq x: l \equiv 1 \pmod {16}\}}=\frac{3}{4^{i+1}}.
\\ \label{eq:conj21.5}
&\lim\limits_{x \rightarrow \infty}\frac{\#\{l \leq x: l \equiv 1 \pmod {16}, t_2(-2l)=2^{i+2}\}}{\#\{l \leq x: l \equiv 1 \pmod {16}\}}=\frac{3}{4^{i+1}}.		
\\ \label{eq:conj22}
	&\lim\limits_{x \rightarrow \infty}\frac{\#\{l \leq x: l \equiv (-1)^e \pmod 8,\ t_2(l)= 2^{i+1+e}\}}{\#\{l \leq x: l \equiv (-1)^e \pmod 8\}}=\frac{1}{2^{i+1}}.
\\ \label{eq:conj23}
	&\lim\limits_{x \rightarrow \infty}\frac{\#\{l \leq x: l \equiv (-1)^e \pmod 8,\ t_2(2l)=2^{i+1}\}}{\#\{l \leq x: l \equiv (-1)^e \pmod 8\}}=\frac{1}{2^{i+1}}.
	\end{align}
\end{conj}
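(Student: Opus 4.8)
The natural strategy for this conjecture is to reduce each of the four families to an equidistribution statement for a single $2$-adic invariant, to settle the low levels by Chebotarev applied to explicit governing fields, and to isolate the deep input needed to reach all levels. First I would record what is already available: $i\in\{0,1\}$ of \eqref{eq:conj22} and \eqref{eq:conj23} is Theorem~\ref{thm:i=0,1}; Remark~\ref{rmk:finer} gives $i\le 2$ of \eqref{eq:conj22} along the splitting $l\equiv 1,9\bmod 16$; and the $i=0$ cases of \eqref{eq:conj21}--\eqref{eq:conj21.5} follow from Theorem~\ref{thm:special-family1}. The shape of the four densities is dictated by the Cohen-Lenstra weights in Conjecture~\ref{conj:CL-full-family}: for a real quadratic field the weight of a cyclic group $\BZ/2^n$ is $1/\#\Aut(\BZ/2^n)=2^{-(n-1)}$, so each increment of $t_2$ costs a single independent binary condition and the law is $2^{-(i+1)}$; for an imaginary field the weight is $1/(\#G\cdot\#\Aut G)=2^{1-2n}$, so an increment costs two binary conditions and, after normalising over the forced range $n\ge 3$ (resp.\ $n\ge 2$), the law is $3\cdot 4^{-(i+1)}$. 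The proof must therefore exhibit one governing parameter per level in the real case and two in the imaginary case.

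The reductions themselves run as follows. For the real families, Lemma~\ref{lem:units} and Coates' formula \eqref{eq:coates} give $\nu_2(t_2(l))=\nu_2(a_l)-1=\nu_2(\log_2\varepsilon_l)-1$ and $\nu_2(t_2(2l))=\nu_2(h(2l))+\nu_2(b_{2l})-1$; since $h(l)$ is odd, $t_2(l)$ is controlled purely by the $2$-adic regulator $\log_2\varepsilon_l$, the single governing parameter. For $l\equiv 7\bmod 8$ the Leonard-Williams congruence $h(-2l)\equiv b_{2l}\bmod 16$ and, for $l\equiv 1\bmod 8$, the congruences of Williams \cite{Wil81} and Kaplan-Williams \cite{KW82} (as used in Theorem~\ref{thm:l78} and Proposition~\ref{prop:special-fam}) convert \eqref{eq:conj22} and \eqref{eq:conj23} into equidistribution of the $2$-adic valuation of an imaginary $2$-class number in a progression modulo $16$. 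For the imaginary families one uses the canonical exact sequence \eqref{eq:2.3}: $\CT_2(\BQ(\sqrt{-l}))$ is an extension of a subgroup of $\Cl_2(\BQ(\sqrt{-l}))$ by the $2$-primary part of $U_{1,\fp}/\{\pm1\}$, and $t_2(-l)$ equals $2\,h_2(-l)$ divided by a $2$-adic index of the local units against the global ones, so $\nu_2(t_2(-l))$ depends on the two parameters $\nu_2(h_2(-l))$ and that index (the same double dependence is visible through the comparison Theorem~\ref{thm:compareTK} with the tame kernel $K_2\CO_{\BQ(\sqrt l)}$). At the first one or two levels all these valuations are governed, in the Cohen-Lenstra sense, by splitting conditions in the explicit fields of Stevenhagen \cite{Ste93}, Milovic \cite{Mil17}, Koymans \cite{Koy20} and Koymans-Milovic \cite{KM17} used in the proof of Theorem~\ref{thm:i=0,1}, so Chebotarev delivers the claimed densities there.

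The main obstacle is reaching large $i$. The governing-field description fails beyond a bounded number of levels — for the $2$-class group this is the Koymans-Milovic non-existence theorem, and for the $2$-adic valuation of the fundamental unit even the low-level picture is fragmentary — so Chebotarev cannot be continued. Closing the gap would require transplanting Smith's method \cite{Smi17}, namely control of all joint moments of the relevant R\'edei/spin symbols via reflection and averaging over ``cube'' configurations, to the present setting; for the imaginary families this can plausibly be routed through Theorem~\ref{thm:compareTK}, where Smith's equidistribution results for $2$-class groups of quadratic fields apply, but for the real families \eqref{eq:conj22}--\eqref{eq:conj23} one additionally needs an equidistribution theorem for $\nu_2(\log_2\varepsilon_l)$ that is not currently known — this is precisely where the conjecture meets the Shanks-Sime-Washington speculation on the zeros of $2$-adic $L$-functions, and is the reason these statements remain conjectural.
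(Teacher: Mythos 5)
This statement is a conjecture: the paper offers no proof of it, only a heuristic justification (the local Cohen--Lenstra weights $\omega_0$, $\omega_1$ computed in the remark following the conjecture, which yield exactly the constants $1/2^{i+1}$ and $3/4^{i+1}$), the proved low-level cases (Theorem~\ref{thm:i=0,1}, Remark~\ref{rmk:finer}, and the reformulation via Lemma~\ref{lem:units} and \eqref{eq:coates} linking \eqref{eq:conj22} to $\nu_2(a_l)$ and to the Shanks--Sime--Washington speculation), and the numerical tables in the appendix. Your proposal correctly treats the statement as open, and your weight-function normalisation and your reduction of the real families to the $2$-adic regulator and to class numbers modulo $16$ agree with the paper's own framing, so as a strategy discussion it is broadly consistent with what the authors intend.

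However, two of your concrete claims are wrong. First, the $i=0$ cases of \eqref{eq:conj21} and \eqref{eq:conj21.5} do \emph{not} follow from Theorem~\ref{thm:special-family1}: that theorem only gives the lower bounds $t_2(-l)\geq 8$ and $t_2(-2l)\geq 4$ for $l\equiv 1\bmod 16$, and the remark immediately after it states that the conditions $t_2(-l)=2^i$ ($i\geq 3$) and $t_2(-2l)=2^i$ ($i\geq 2$) are not classified by congruence conditions; no density for exact equality in these imaginary families is proved anywhere in the paper, which is precisely why \eqref{eq:conj21}--\eqref{eq:conj21.5} are conjectural already at $i=0$. Second, routing the imaginary families through Theorem~\ref{thm:compareTK} cannot work as stated: that comparison requires $m\equiv 3\bmod 4$ and $\pm 2\notin N(\BQ(\sqrt{m}))$, whereas for $m=l$ or $2l$ with $l\equiv 1\bmod 16$ one has $2\in N(\BQ(\sqrt{m}))$ (Lemma~\ref{lem:hasse}); these families lie exactly in the exceptional set $N'_x$ that is discarded as density zero in the proof of Theorem~\ref{thm:4-rank-density}, so Smith-type equidistribution for $2$-class groups does not transfer to $\CT_2(-l)$, $\CT_2(-2l)$ by that route, and a genuinely new mechanism (e.g.\ spin-symbol arguments in the style of Koymans--Milovic adapted to the id\`elic generators of Theorem~\ref{thm:4-rankT}) would be needed. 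There is also a minor indexing slip: Remark~\ref{rmk:finer} concerns $t_2(l)=2^i$ for $i=1,2$, i.e.\ the cases $i=0,1$ in the conjecture's indexing, not $i\leq 2$.
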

We shall present numerical evidence  in Tables~\ref{table:-l116} - \ref{table:2l78}.

\begin{remark}
	(1) Under the setting of extended local Cohen-Lenstra heuristic, one can interpret \eqref{eq:conj21} more conceptually as follows. Let $\CM_k=\{ \BZ/{2^{i+k}\BZ}\mid i\geq 0 \}$ for $k\geq 1$. For $G=\BZ/{2^{i+k}\BZ}\in \CM_k$,  then a direct computation gives
	\begin{equation*}
		\frac{\omega_1(G )}{\sum_{H\in \CM_k} \omega_1( H)} =\frac{3}{4^{i+1}}.
	\end{equation*}
	Thus  \eqref{eq:conj21}  is equivalent to that the natural density of primes $l$ with $\CT_2(-l)\cong G$ among all primes $\equiv 1\pmod {16}$ is equal to the ratio of $\omega_1(G)$ to the total $1$-weight of the space $\CM_3$. For \eqref{eq:conj21.5},  the corresponding space is $\CM_2$.

	(2) One can also reformulate \eqref{eq:conj22} and \eqref{eq:conj23} by using the weight function $\omega_0$ and by noting the following identity:
	\begin{equation*}
	\frac{\omega_0(G)}{\sum_{H\in \CM_k} \omega_0( H)} = \frac{1}{2^{i+1}}, \quad \text{ where } \quad G = \BZ/2^{k+i}\BZ.
	\end{equation*}
In \eqref{eq:conj22} (resp. \eqref{eq:conj23}), the total space is $\CM_{e+1}$	(resp. $\CM_{1}$).

	(3) By  Lemma~\ref{lem:units}(1), \eqref{eq:conj22} has the following equivalent form about the distribution of fundamental units:
	for each $i \geq 0$ and $e\in \{0,1\}$,
	\begin{equation} \lim\limits_{x \rightarrow \infty}\frac{\#\{l\ \text{prime}:\ l\leq x,\ l \equiv (-1)^e \pmod8,\ \nu_2(a_l)=i+2+e\}}{\#\{l\ \text{prime}:\ l\leq x,\  l \equiv (-1)^e \pmod8\}}=\frac{1}{2^{i+1}}. \end{equation}

	(4) Finally, for $l\equiv 1\pmod{8}$,  \eqref{eq:conj22} actually has a finer form: for $i\ge 0$ and $a\in \{1,9\}$,
	\begin{equation} \label{eq:conj2a}	\lim\limits_{x \rightarrow \infty}\frac{\#\{l \leq x: l \equiv a \pmod {16},\ t_2(l)= 2^{i+1}\}}{\#\{l \leq x: l \equiv a \pmod {16}\}}=  \frac{1}{2^{i+1}}.	\end{equation}
	The cases $i=0$ and $1$ were proved in  Theorem~\ref{thm:i=0,1}. We actually speculate that this is the case for all sub-congruent classes $a\pmod{2^k}$ of $1\pmod{8}$.
	
	In the case $a=9$, let $\chi_l$ be  the associated Dirichlet character of $\BQ(\sqrt{l})$ and $L_2(s,\chi_l)$ be its $2$-adic $L$-function, by the $2$-adic class number formula (see \cite[Theorem 5.24]{Was97}) and Coates' order formula \eqref{eq:coates},   \eqref{eq:conj2a} has the following equivalent form which
	was implicitly proposed by Shanks-Sime-Washington in \cite[p. 1253]{SSW99}:
	\begin{equation}\label{eq:L1}\lim\limits_{x \rightarrow \infty} \frac{\#\{ l \text{ prime}: l \leq x,  l \equiv 9 \pmod {16} \text{ and } \nu_2(L_2(1,\chi_l))=i+2\}}{\#\{l \text{ prime}: l \leq x \text{ and }l \equiv 9 \pmod {16}\}}=\frac{1}{2^{i+1}}. \end{equation}

\end{remark}

\appendix
\section{Data for Conjecture~\ref{conj:CL-full-family}}
 In Tables~\ref{table:t5real}-\ref{table:t7ima}, we let the middle value be the ratio of field $F$ such that $\CT_p(F)\cong G$ among all  quadratic fields whose absolute discriminant $\leq B$, and $\BD$ be the value predicted by Conjecture~\ref{conj:CL-full-family}.

\begin{table}[H]
	\caption{$\CT_{5}$ of real quadratic fields}
	\centering
	\begin{tabular}{| c || c | c | c | c | c |}
		\hline
		\diagbox{$B$}{$G$}  & ${\BZ/5\BZ}$ & ${\BZ/25\BZ}$ & ${(\BZ/5\BZ)^2}$ & ${\BZ/5\BZ \times \BZ/25\BZ}$ &$ {(\BZ/5\BZ)^3}$\\
		\hline
		$10^7$  & 0.1876   & 0.03694  & 1.375 e-3 &	 3.277 e-4  & 0  \\
		$2*10^7$  &0.1880& 0.03712 &  1.396 e-3 &	3.463 e-4 & 1.645 e-7 \\
		$3*10^7$ & 0.1880 & 0.03727 & 1.416 e-3 &	3.439 e-4 & 2.193 e-7 \\
		$4*10^7$ & 0.1880 & 0.03739 & 1.438 e-3 &  3.447 e-4 &3.290 e-7 \\
		$5*10^7$  & 0.1882 & 0.03740 & 1.453 e-3 & 3.430 e-4 &2.632 e-7 \\
		\hline
		$\BD$  & 0.1901 & 0.03802  &1.584 e-3  & 3.802 e-4 & 5.110 e-7 \\
		\hline
	\end{tabular}
	\label{table:t5real}
\end{table}

\begin{table}[H]
	\caption{$\CT_{7}$ of real quadratic fields}
	\centering
	\begin{tabular}{| c || c | c | c | c |}
		\hline
		\diagbox{$B$}{$G$}  & ${\BZ/7\BZ}$ & ${\BZ/49\BZ}$ & ${(\BZ/7\BZ)^2}$  & ${(\BZ/7\BZ)^3}$\\
		\hline
		$10^7$  & 0.1377  & 0.01950  &  3.622 e-4&	 0    \\
		$2*10^7$   & 0.1382	&  0.01956	& 3.622 e-4 & 0 \\
		$3*10^7$   & 0.1383	& 0.01963	& 3.713 e-4 & 0 \\
		$4*10^7$   & 0.1385	&  0.01966	& 3.764 e-4  &  0 \\
		$5*10^7$  & 0.1385	& 0.01968	& 3.833 e-4  &  5.483 e-8 \\
		\hline
		$\BD$ &  0.1395  & 0.01992 & 4.151 e-4 &  2.477 e-8\\
		\hline
	\end{tabular} \label{table:t7real}
\end{table}

\begin{table}[H]
	\caption{$\CT_{5}$ of imaginary quadratic fields}
	\centering
	\begin{tabular}{| c || c | c | c | c | c |}
		\hline
		\diagbox{$B$}{$G$}  & ${\BZ/5\BZ}$ & ${\BZ/25\BZ}$ & ${(\BZ/5\BZ)^2}$ & ${\BZ/5\BZ \times \BZ/25\BZ}$ &$ {(\BZ/5\BZ)^3}$\\
		\hline
		$10^7$  &  0.04558 &  1.767 e-3 & 6.185 e-5  &	6.580 e-7 &0   \\
		$2*10^7$  & 0.04584 & 1.789 e-3 & 6.004 e-5 &	1.645 e-6 &0 \\
		$3*10^7$  & 0.04604 & 1.801 e-3 & 6.152 e-5 &	2.084 e-6 &0 \\
		$4*10^7$  & 0.04613 & 1.809 e-3 & 6.424 e-5 & 2.385 e-6 & 0\\
		$5*10^7$  & 0.04618 & 1.915 e-3 &  6.659 e-5 & 2.237 e-6 & 0\\
		\hline
		$\BD$ &   0.04752  & 1.901 e-3 & 7.920 e-5 & 3.802 e-6 & 5.110  e-9\\
		\hline
	\end{tabular} \label{table:t5ima}
\end{table}

\begin{table}[H]
	\caption{$\CT_{7}$ of imaginary quadratic fields}
	\centering
	\begin{tabular}{| c || c | c | c | c |}
		\hline
		\diagbox{$B$}{$G$}    & ${\BZ/7\BZ}$ & ${\BZ/49\BZ}$ & ${(\BZ/7\BZ)^2}$ & ${(\BZ/7\BZ)^3}$\\
		\hline
		$10^7$  & 0.02287  & 0.00043  &  3.619 e-6 &	0    \\
		$2*10^7$   &0.02297& 0.00045 & 5.263 e-6 & 0  \\
		$3*10^7$   & 0.02302 &0.00045 & 5.593 e-6 & 0  \\
		$4*10^7$   &0.02307 & 0.00045 & 6.827 e-6 & 0  \\
		$5*10^7$   & 0.02307 & 0.00045 &7.435 e-6 & 0  \\
		\hline
		$\BD$ &   0.02324  & 0.00047 & 9.883 e-6 & 8.425 e-11\\
		\hline
	\end{tabular} \label{table:t7ima}
\end{table}

\section{Data for Conjectures~\ref{conj:density1}}
In the last six Tables, we let the middle value be the ratio of field $F$ such that $\CT_p(F)\cong G$ among all  quadratic fields whose absolute discriminant $\leq B$, and $\BD$ be the value predicted by Conjecture~\ref{conj:density1}.

	\begin{table}[H]
		\centering
		\caption{$\CT_{2}$ of $\BQ (\sqrt{-l}), l\equiv 1\pmod {16} \text{ and } l \text{ is a prime }$}
		\begin{tabular}{|c||c|c|c|c|c|}
			\hline
			\diagbox{$B$}{$G$}      & $\BZ/8\BZ$ & $\BZ/16\BZ$ & $ \BZ/32\BZ$ &  $\BZ/64\BZ$  & $\BZ/128\BZ$\\
			\hline
			$10^7$   &  0.7508 & 0.1867 &  0.04704 & 0.01172 &  2.905 e-3    \\
			\hline
			$2*10^7$   & 0.7501  &  0.1872 &  0.04708 & 0.01170 &  3.062 e-3      \\
			\hline
			$3*10^7$  &  0.7501 & 0.1878& 0.04658  &0.01169  &2.977 e-3    \\
			\hline
			$4*10^7$   & 0.7498 &  0.1881 & 0.04666  & 0.01166 &  2.910 e-3     \\
			\hline
			$5*10^7$  & 0.7496  &0.1880 & 0.04694  & 0.01160 &  2.934 e-3  \\
			\hline
			$\BD$ &  0.75 &  0.1875 & 0.04688  & 0.01172  &   2.930 e-3  \\
			\hline
		\end{tabular} \label{table:-l116}
\end{table}

	\begin{table}[H]
		\centering
		\caption{$\CT_{2}$ of $\BQ (\sqrt{-2l}), l\equiv 1\pmod {16} \text{ and } l \text{ is a prime }$}
		\begin{tabular}{|c||c|c|c|c|c|} \hline
			\diagbox{$B$}{$G$}      & $\BZ/4\BZ$ & $\BZ/8\BZ$ & $\BZ/16\BZ$ & $ \BZ/32\BZ$ &  $\BZ/64\BZ$  \\
			\hline
			$10^7$   &  0.7508 & 0.1876  & 0.04611& 0.01144 &  3.134 e-3      \\
			\hline
			$2*10^7$  &  0.7501 & 0.1886& 0.04604  &0.01142  &  3.075 e-3    \\
			\hline
			$3*10^7$   & 0.7501  & 0.1885 &  0.04611 &0.01140 &   3.029 e-3    \\
			\hline
			$4*10^7$  &0.7498  & 0.1885& 0.04633  & 0.01153 &  3.032 e-3  \\
			\hline
			$5*10^7$  & 0.7496 & 0.1883 &0.04655  & 0.01157 &  3.051 e-3  \\
			\hline
			$\BD$ &  0.7500 &  0.1875 & 0.04688  & 0.01172  &  2.930 e-3  \\
			\hline
		\end{tabular}  \label{table:-2l116}
\end{table}
	\begin{table}[H]
		\centering
		\caption{$\CT_{2}$ of $\BQ (\sqrt l), l\equiv 1\pmod 8 \text{ and } l \text{ is a prime }$}
		\begin{tabular}{|c||c|c|c|c|c|c|}
			\hline
			\diagbox{$B$}{$G$}      & $\BZ/2\BZ$ & $\BZ/4\BZ$ & $\BZ/8\BZ$ & $ \BZ/16\BZ$ &  $\BZ/32\BZ$ & $\BZ/64\BZ$ \\
			\hline
			$10^7$   & 0.5002  & 0.2499  & 0.1245  & 0.06236 & 0.03169  &  0.01553  \\
			\hline
			$2*10^7$   & 0.5000  & 0.2499  & 0.1245  &  0.06255 &   0.03163  &0.01567   \\
			\hline
			$3*10^7$  & 0.5005  & 0.2496  & 0.1246  &  0.06278 &  0.03115  &  0.01560\\
			\hline
			$ 4*10^7$  & 0.5003  & 0.2496  & 0.1247  & 0.06278  &   0.03115 &    0.01564  \\
			\hline
			$5*10^7$  & 0.5001  & 0.2497  & 0.1247  &  0.06281 & 0.03116   &   0.01567   \\
			\hline
			$\BD$ &  0.5000 &  0.2500 & 0.1250  & 0.06250  &  0.03125 & 0.01563   \\
			\hline
		\end{tabular}  \label{table:l18}
\end{table}
	\begin{table}[H]
		\centering
		\caption{$\CT_{2}$ of $\BQ (\sqrt l), l\equiv 7\pmod 8 \text{ and } l \text{ is a prime }$}
		\begin{tabular}{|c||c|c|c|c|c|c|}
			\hline
			\diagbox{$B$}{$G$}      & $\BZ/4\BZ$ & $\BZ/8\BZ$&$ \BZ/16\BZ$ &  $\BZ/32\BZ$ & $\BZ/64\BZ$ & $\BZ/128\BZ$  \\
			\hline
			$10^7$    & 0.5000   & 0.2484   & 0.1260  &  0.06361   & 0.03103   &0.01518    \\
			\hline
			$ 2*10^7$     & 0.5000   & 0.2494   & 0.1255   &  0.06265  & 0.03123  &  0.01534  \\
			\hline
			$3*10^7$    & 0.4998   & 0.2497   & 0.1252   &   0.06278   & 0.03109  & 0.01557\\
			\hline
			$4*10^7$  & 0.4999   & 0.2497   &  0.1254  & 0.06246    &   0.03112  & 0.01570  \\
			\hline
			$ 5*10^7$   & 0.5001   & 0.2497   & 0.1254   &  0.06237   & 0.03116  & 0.01570   \\
			\hline
			$\BD$ &  0.5000  &  0.2.500  & 0.1250   & 0.06250   &  0.03125  & 0.01563   \\
			\hline
		\end{tabular}   \label{table:l78}
\end{table}
	\begin{table}[H]
		\centering
		\caption{$\CT_{2}$ of $\BQ (\sqrt {2l}), l\equiv 1\pmod 8 \text{ and } l \text{ is a prime }$}
		\begin{tabular}{|c||c|c|c|c|c|c|}
			\hline
			\diagbox{$B$}{$G$}     & $\BZ/2\BZ$ & $\BZ/4\BZ$ & $\BZ/8\BZ$ & $\BZ/16\BZ$ &  $\BZ/32\BZ$ & $\BZ/64\BZ$\\
			\hline
			$10^7$   &  0.5006   &  0.2515   &  0.1237  & 0.06214  & 0.03100   & 0.01564  \\
			\hline
			$2*10^7$   & 0.5004   & 0.2511   &  0.1239   & 0.06219  &  0.03105   &  0.01576     \\
			\hline
			$3*10^7$  & 0.5001   &  0.2506  &  0.1245  &   0.06256  &   0.03093   &    0.01572    \\
			\hline
			$ 4*10^7$  &  0.5001  & 0.2505   &  0.1249  &  0.06233  &  0.03090  &   0.01564  \\
			\hline
			$5*10^7$  &  0.5000 & 0.2503  & 0.1252   & 0.06236  &    0.03083  &   0.01572       \\
			\hline
			$\BD$ &  0.5000  &  0.2500  & 0.1250   & 0.06250   &  0.03125  & 0.01563   \\
			\hline
		\end{tabular}  \label{table:2l18}
\end{table}

\begin{small}
	\begin{table}[H]
		\centering
		\caption{$\CT_{2}$ of $\BQ (\sqrt {2l}), l\equiv 7\pmod 8 \text{ and } l \text{ is a prime }$}
		\begin{tabular}{|c||c|c|c|c|c|c|}
			\hline
			\diagbox{$B$}{$G$}     & $\BZ/2\BZ$ & $\BZ/4\BZ$ & $\BZ/8\BZ$ & $\BZ/16\BZ$ &  $\BZ/32\BZ$ & $\BZ/64\BZ$ \\
			\hline
			$10^7$   &  0.5000   &  0.2484  & 0.1253   & 0.06378  & 0.03129   & 0.01565       \\
			\hline
			$2*10^7$   & 0.5000   & 0.2.494   &  0.1253   &  0.06258  & 0.03137    &  0.01565     \\
			\hline
			$3*10^7$  & 0.4998   & 0.2497  &   0.1254  &  0.06258  &  0.03116  &   0.01575      \\
			\hline
			$ 4*10^7$  & 0.4999   & 0.2497   & 0.1252   &  0.06267  & 0.03129    &  0.01569       \\
			\hline
			$5*10^7$  &  0.5001  &  0.2497  &  0.1250  &  0.06268  &  0.03126   &   0.01573   \\
			\hline
			$\BD$ &  0.5000  &  0.2500  & 0.1250   & 0.06250   &  0.03125  & 0.01563   \\
			\hline
		\end{tabular}  \label{table:2l78}
\end{table}\end{small}

\end{document}